\documentclass[reqno,a4paper]{article}

\setlength{\textwidth}{6.8in}
\setlength{\oddsidemargin}{-0.4in}
\setlength{\evensidemargin}{-0.1in}
\setlength{\textheight}{9.6in}
\setlength{\topmargin}{-0.6in}
\setlength{\parindent}{10pt}
\setlength{\parskip}{3pt}


\usepackage{amssymb,amsmath,epsfig,graphics,graphicx,color,subfigure}

\definecolor{lightblue}{rgb}{0.22,0.45,0.70}
\usepackage[colorlinks=true,breaklinks=true,linkcolor=lightblue,citecolor=lightblue]{hyperref}
\usepackage{float}
\usepackage{mathrsfs}
\usepackage{comment}
\usepackage{tikz}
\usepackage{pgfplots}

\newtheorem{remark}{Remark}[section]
\newtheorem{lemma}{Lemma}[section]
\newtheorem{theorem}{Theorem}[section]
\newtheorem{proposition}{Proposition}[section]

\usepackage{soul}

\renewcommand{\O}{\Omega}

\newcommand\E{{K}}

\newcommand\Vh{{\calV}_h}

\def\qon{{\quad\hbox{on}\quad}}

\newcommand{\calA}{ \mathcal{A}}

\newcommand{\calC}{ \mathfrak{C}}

\newcommand{\calE}{ \mathcal{E}}
\newcommand{\calR}{ \mathcal{R}}

\renewcommand{\P}{{\mathcal P}}  

\newcommand\R{\mathbb{R}}
\newcommand\N{\mathbb{N}}

\def\dof{{\rm dof}}



\def\bD{{\bf D}}

\def\ub{\boldsymbol{u}}

\def\bv{{\bf v}}
\def\vb{\boldsymbol{v}}

\def\wb{\boldsymbol{w}}
\def\fb{\boldsymbol{f}}
\def\nb{\boldsymbol{n}}
\def\tb{\boldsymbol{t}}

\def\Zb{{\bf Z}}
\def\H{{\bf H}}

\def\taub{{\boldsymbol \tau}}
\def\sigb{{\boldsymbol \sigma}}

\def\0{\boldsymbol{0}}
\def\div{{\rm div} \:}
\def\calD{\mathcal D}

\def\calN{\mathfrak N}

\def\curl{{\mathbf{curl}} \:}

\def\wtalpha{\widetilde{\alpha}}

\def\CT{\mathscr{T}}
\def\CTh{\mathscr{T}_h}
\def\calW{\mathcal{W}}
\def\calV{\mathcal{M}}

\newcommand{\calZ}{ \mathcal{Z}}

\def\c{{\rm C}}
\def\nc{{\rm NC}}

\def\inte{{\rm int}}
\def\bdry{{\rm bdry}}
\def\rI{{\rm I}}
\def\Ib{{\boldsymbol \rI}}

\def\VtK{\widetilde{\calV}_{h}(\E)}
\def\VK{\calV_{h}(\E)}

\def\HncT{H^{2,\nc}(\CTh)}

\def\CRS{\:\boldsymbol{\mathcal{U}}}

\def\HdoK{{H^{2}(\E)}}

\def\VV{\mathscr{V}}
\def\VVi{\VV_h^{\inte}}
\def\VVbdry{\VV_h^{\bdry}}
\def\EE{\mathscr{E}}
\def\EEh{\EE_h}
\def\EEi{\EE_h^{\inte}}
\def\EEbdry{\EE_h^{\bdry}}

\def\PiK{\Pi_{\E}^{\bD}}

\def\PioK{\Pi^{2}_{\E}}
\def\PimoK{\Pi^{0}_{\E}}

\def\PizeroK{\boldsymbol{\Pi}^0_{\E}}
\def\PiunoKb{\boldsymbol{\Pi}^{1}_{\E}}

\def\Pivec{\boldsymbol{\Pi}_{\E}^{\nablab}}

\def\CAh{C_{A_h}}
\def\CBh{C_{B_h}}
\def\CFh{C_{F_h}}
\def\CB{C_{B}}
\def\CBD{C_{{\tt bd}}}
\def\CR{C_{{\tt reg}}}
\def\CS{C_{{\tt sob}}}

\newcommand\rot{\mathop{\mathrm{rot}}\nolimits}

\newcommand\chib{\boldsymbol{\chi}}
\newcommand\bu{\boldsymbol{u}}

\def\reg{{\rm reg}}

\def\DVu{\mathbf{D}_{\calV}{1}}
\def\DVd{\mathbf{D}_{\calV}{2}} 

\def\DU{\mathbf{D}_{\CRS}}


\newcommand\nablab{\boldsymbol{\nabla}}
\newcommand\Deltab{\boldsymbol{\Delta}}

\newcommand*{\jump}[1]{\lbrack\hspace{-1.5pt}\lbrack #1\rbrack\hspace{-1.5pt}\rbrack}

\newenvironment{proof}{\noindent{\it Proof.}}{\hfill$\square$ \\}

\usetikzlibrary{arrows,decorations.markings}
\pgfplotsset{compat=1.5}
\usepackage{pgfkeys}
    \newenvironment{customlegend}[1][]{%
        \begingroup
        \csname pgfplots@init@cleared@structures\endcsname
        \pgfplotsset{#1}%
    }{%
        \csname pgfplots@createlegend\endcsname
        \endgroup
    }%
    \def\addlegendimage{\csname pgfplots@addlegendimage\endcsname}

\begin{document}

\title{The Morley-type virtual element method for the Navier-Stokes equations 
		in stream-function form on general meshes}

\author{D. Adak\thanks{GIMNAP, Departamento de Matem\'atica,
		Universidad del B\'io-B\'io, Concepci\'on, Chile.
		E-mail: {\tt dadak@ubiobio.cl}},\quad 
	D. Mora\thanks{GIMNAP, Departamento de Matem\'atica, Universidad
		del B\'io-B\'io, Concepci\'on, Chile and
		CI$^2$MA, Universidad de Concepci\'on, Concepci\'on, Chile.
		E-mail: {\tt dmora@ubiobio.cl}},\quad
	A. Silgado\thanks{GIMNAP, Departamento de Matem\'atica,
		Universidad del B\'io-B\'io, Concepci\'on, Chile.
		E-mail: {\tt alberth.silgado1701@alumnos.ubiobio.cl}}
}

\date{}
\maketitle

\begin{abstract}
The nonconforming Morley-type virtual element method for 
the incompressible Navier-Stokes equations formulated  
in terms of the stream-function on  simply connected polygonal
domains (not necessarily convex) is designed. 
A rigorous analysis by using a new \textit{enriching operator}  
is developed. More precisely, by employing such operator,  
we provide novel discrete Sobolev embeddings, 
which allow to establish  the well-posedness of the discrete scheme and obtain 
optimal error estimates in broken $H^2$-, $H^1$- and  $L^2$-norms  under 
\textit{minimal regularity} condition on the weak solution. 
The velocity and vorticity fields are recovered via a postprocessing formulas. 
Furthermore, a new algorithm for pressure recovery 
based on a Stokes complex sequence is presented. Optimal error estimates
are obtained  for all the postprocessed variables.
Finally, the theoretical error bounds and the good performance of the method
are validated through several benchmark tests.
\end{abstract}

\noindent
{\bf Key words}: Nonconforming virtual elements, Stokes complex, 
stream-function form, enriching operator, discrete Sobolev embeddings, 
optimal error estimates, velocity-vorticity-pressure recovery, polygonal meshes.

\smallskip\noindent
{\bf Mathematics subject classifications (2000)}:  65N30, 65N12, 76D05, 65N15.

\maketitle
\section{Introduction}
The two dimensional steady Navier-Stokes equations in its standard velocity-pressure 
form reads as:  given a sufficiently smooth force density 
$\fb :  \O \to \R^2$, find $(\ub,p)$ such that  
\begin{equation}\label{NSE:velocity:pressure}
	\begin{split}
		- \nu \Deltab\ub + (\nablab\ub)\ub + \nabla p=\fb,
		\qquad \div\ub&= 0\qquad\textrm{in}\quad\Omega,\\
		\ub=\0 \qquad\textrm{on}\quad \Gamma:=\partial \O,
		\qquad  (p,1)_{0,\O} &=0,
	\end{split}
\end{equation}
where $\ub: \Omega \to  \R^2$ is the velocity field, $p: \Omega \to  \R$ is the pressure 
fields  and  $\nu >0$ represents the fluid viscosity.
This system model the behaviour of a viscous incompressible fluid in the domain $\O$. 
The first and second equations in~\eqref{NSE:velocity:pressure} dictates the momentum and mass 
conservation of the fluid, while the third  identity indicates non-slip boundary conditions 
for the velocity field and the last equation represents the mean value of $p$ over $\O$ vanishing, 
which is used for the uniqueness of the pressure solution. Due to the important role it 
plays in the study of viscous incompressible flows, several numerical schemes have been 
developed to efficiently approximate the Navier–Stokes system. 
In particular, we are interested in discretizing this system by using 
general polygonal decompositions and  introducing the \textit{stream-function} of the 
velocity field.

In the last years, numerical methods for PDEs on polytopal meshes have received substantial 
attention. Different approaches have been proposed 
(see for instance \cite{BE2016} and the references therein),  offering 
significant flexibility in terms of dealing with complicated domains. 
Among them, we can find the Virtual Element Method (VEM), 
which was presented for first time in~\cite{BBCMMR2013}, as an evolution of 
mimetic finite differences  and a generalization of 
the Finite Element Method (FEM). The approach of VEM allows avoid an 
explicit construction of the discrete shape functions and this fact  
implies a high flexibility of the method, which is reflected, 
for instance in the ability to construct numerical schemes of high-order 
on general polygonal meshes (including ``hanging vertexes" and nonconvex shapes). 
Moreover, in the construction of discrete spaces with high-regularity  
and of schemes with the divergence-free property (in the context of fluid problems). 
In virtue of these features, the VEM technology has enjoyed extensive 
success in numerical modeling and engineering applications, both in its conforming and 
\textit{nonconforming}  approaches (see for instance \cite{BM13,CMS2016,BBMR2016,ABSV2016,BMRR2017,ALKM2016,CKP2022-NM}). 
 In particular, many works have been devoted to solving problems 
in fluid mechanics by using the VEM.
Below are two representative list works in the conforming and 
nonconforming cases; 
~\cite{ABMV2014,GSMM3AS2018,BMV2019,BLV-NS18,AMNS2021-M2AN} and 
\cite{CGM2016,LC19,ZZMC19,LLN2020}, respectively. 
For a  current state of the art on VEM,  we refer to book~\cite{Book_VEM2022}.

In~\cite{AMV2018,ZZCM2018} the authors have introduced 
fully-nonconforming VEMs of high-order, independently and by using  different approaches
to solve biharmonic problems. In particular, the lowest-order configuration (i.e., $k=2$) 
of these  VEMs, can be consider as the extension of the popular Morley FE \cite{Morley} to general 
polygonal meshes. Since then, several schemes and analysis based on these VEMs have been developed for linear problems; see for instance~\cite{CH2020,LZHC2021,WZ2021-IMAJNA,DH2021-IMAJANA,JY2021,CKP-arxiv2022,AMS2022-CI2MA}.
In the present work we are interesting to extend the Morley-type VEM 
to solve the  nonlinear fourth-order Navier-Stokes equations in stream-function 
form  on simply connected domains (not necessarily convex) by using  general polygonal decompositions. 

Typically, the velocity-pressure formulation~\eqref{standard:weak:NS} is the most used to 
discretize the Navier-Stokes problem. 
However, the stream-function formulation has shown to be a competitive alternative
to discretize flow problems, which has been the focus of study in the last decades. 
In particular, we can highlight the following features: the system is reduced in a singular 
scalar weak formulation, with automatic satisfaction of the incompressibility constrain 
(the velocity field is equal to the curl of the stream-function),
the possibility to recover further variables of interest such as the velocity, vorticity and
pressure fields by postprocessing  from the stream-function.
Besides, for nonlinear problems, the resulting trilinear form is naturally skew-symmetric, 
allowing more direct stability and convergence arguments.
On the other hand, the stream-function approach avoid the difficulties related with the boundary 
values for the vorticity field, which are present in stream-function–vorticity formulation.
Due to the attractive features discussed above, over past decades  the stream-function formulation  
has received great attention from many researchers. In particular, in the area  of Numerical Analysis
several works have been devoted to the development and study of efficient numerical schemes to approximate
this system. For instance; conforming and nonconforming FEMs in \cite{CN86,CN89,F2003,CMN2021}, 
bivariate spline \cite{LW2000}, $hp$-version discontinuous FE \cite{MSB2016}, 
NURBS-based Isogeometric Analysis in \cite{TDQ2014}.
Moreover, in~\cite{KPK-CMAME2021} the nonconforming Morley FEM
have been used to solve the steady Quasi-Geostrophic equations, which can be seen as an extension 
(in form) of the two dimensional Navier-Stokes equations in stream-function formulation.

In the present contribution we configure the Stokes complex structure of the nonconforming VEM 
introduced  in \cite{ZZMC19} to solve the fourth-order nonlinear Navier-Stokes equations 
in stream-function form on domains not necessarily convex and employing general polygonal 
partitions of the domain, allowing  additionally the reconstruction of the pressure field. 
By using the enhancement technique, we introduce a discrete Stokes complex structure associate
to the Morley-  and  Crouzeix-Raviart-type VE spaces. Then, we 
construct suitable projections useful to build the discrete trilinear form, which mimics the 
interesting and \textit{naturally skew-symmetry} property of the continuous version.
In order to establish the well-posedness of the discrete nonconforming formulation, 
is necessary to prove  the continuity of the resulting discrete trilinear form respect to 
the natural norm in the Morley-type VE space $\Vh$. However, this fact does not follow directly, since it involves a discrete Sobolev inclusion (namely, $\Vh \subset W^{1,4}(\O)$).  The derivation of the Sobolev embeddings require 
particular attention for the nonconforming approach, which is usually considered a challenging task.
To the best of our knowledge, this is the first work where Sobolev embeddings for the Morley-type VE 
space are established. More precisely, with the aim of achieving such purpose, we introduce a novel 
\textit{enriching operator}, which is a special kind of quasi-interpolation operator that maps the elements 
of the sum space between the continuous and nonconforming spaces (namely, $\Phi + \Vh$)  
to the conforming counterpart of the nonconforming space.
Then, by using this operator and its approximation properties we provide 
new discrete Sobolev embeddings for the sum space $\Phi + \Vh$ and we prove the 
well-posedness of the discrete problem by using the fixed point Banach Theorem.

It well know that due to nonconformity of the space increases the technicalities in the demonstrations  
of error estimates in the nonconforming approach, implying in some cases high-regularity of the solution,
which are not realistic. Furthermore, for nonlinear problems these difficulties increase remarkably. 
In the present work, by employing the naturally skew-symmetry  property of 
the discrete trilinear form and  the discrete Sobolev inclusion, we write elegantly an abstract convergence result for the nonlinear VE scheme. Then, by exploiting again the enriching operator, we establish key approximation properties 
involving the bilinear and trilinear forms, together  with the consistency 
errors, allowing the derivation of an optimal error estimate in broken  $H^2$-norm under the 
\textit{minimal regularity} condition on the weak stream-function solution  (see below Theorem~\ref{regu-aditional}).
In addition, by using duality arguments and the enriching operator we also provided new optimal error estimates in the $H^1$- and $L^2$-norm under the same regularity condition on the stream-function and  the density force.

On the another hand, by exploiting the stream-function approach, we present techniques to recover further variables 
of physical interest, such as, the primitive velocity and pressure variables, along with the important vorticity field. More precisely, we recover the velocity and vorticity fields  
through a postprocess of the discrete stream-function by using adequate polynomial projections,  
which are directly computable from the degrees of freedom. 
The pressure recovery procedure require a special attention. Indeed, we approximate the 
fluid pressure by exploiting the Stokes complex sequence associate to the Morley-  and  Crouzeix-Raviart-type VE spaces,  and solving an additional  Stokes-like system with right hand side coming from the virtual 
stream-function solution and the force density $\fb$.  For all the postprocessed variables, 
we provide optimal a priori error estimates.
Furthermore the numerical method is tested with several benchmark tests, 
including the Kovasznay and cavity problems, where  the theoretical accuracy 
and the good performance of the scheme are corroborated.
Finally, we expect that the results reported here constitute a stepping-stone towards  
for the development and analysis of new numerical schemes based on the Morley-type VEM,
for solving fourth-order related problems, allowing  the derivation of optimal order 
error estimates in different broken norms, under less regularity assumptions of the solution 
in more complicated situations, such as, nonlinear coupled and/or time dependent systems.

The outline  of the remaining parts of this paper reads as follows: 
in Section~\ref{cont_problem} we introduce some preliminaries notations and the stream-function 
weak formulation of the Navier-Stokes problem~\eqref{NSE:velocity:pressure}. 
Moreover, we recall its well-posedness and regularity property. 
The Morley-type VE discretization, together with the  
Crouzeix-Raviart VE space  are described in Section~\ref{VEM-disc}. 
In Section~\ref{Discrete-Problem} we introduce the enriching operator, provide 
the discrete Sobolev embeddings and the well-posedness of the  discrete problem
by using a fixed-point strategy. 
In Section~\ref{Error:Analisis} we develop the error analysis of the scheme under minimal regularity 
condition on the weak solution. 
In Section~\ref{rec:fields} we describe the recovery techniques for the velocity, vorticity and pressure fields 
by using the discrete stream-function solution. Finally, several numerical tests on different polygonal meshes 
are reported in Section~\ref{numeric_Result}.

\section{Preliminaries and continuous weak form}\label{cont_problem}
\setcounter{equation}{0}

\subsection{Notations}
In this subsection we introduce notations that we will use along the paper, including those already
employed above. We will follow the standard notations of Sobolev spaces and 
their respective seminorms and norms according~\cite{AF2003}. Hence, 
for every open bounded domain $\calD$, 
the   seminorms and norms  in the spaces $L^q(\calD)$ and $W^{\ell,q}(\calD)$ 
(with $\ell \geq 0$ and $q \in [1,+ \infty)$), 
are denoted by $|\cdot|_{\ell,q,\calD}$ and  $\|\cdot\|_{\ell,q,\calD}$, respectively.
We adopt the usual convention $W^{0,q}(\calD):=L^q(\calD)$. In particular when $q=2$, 
we  write $H^{\ell}(\calD)$  instead to  $W^{\ell,2}(\calD)$ and  the corresponding
convention for the seminorms and norms is also adopted, i.e.,   
$|\cdot|_{\ell,\calD}$  and  $\|\cdot\|_{\ell,\calD}$, respectively.

For any tensor fields $\taub=(\tau_{ij})_{i,j=1,2}$ and  $\sigb=(\sigma_{ij})_{i,j=1,2}$,
we consider the standard scalar product of $2 \times 2$-matrices: 
$\taub : \sigb = \sum_{i=1}^{2} \tau_{ij} \sigma_{ij}$ and for simplicity the scalar, 
vectorial and tensorial $L^2$-inner products will be denoted by
\begin{equation*}
	(\varphi,\phi)_{0,\calD}= \int_{\calD} \varphi \phi \qquad
	(\vb,\wb)_{0,\calD}= \int_{\calD} \vb \cdot \wb 
	\qquad  (\taub,\sigb)_{0,\calD}= \int_{\calD} \taub : \sigb. 
\end{equation*}
 
Moreover, with the usual notations, for scalar functions the symbols 
$\nabla$, $\Delta$, $\Delta^2$ and $\bD^2$  denote the gradient, 
Laplacian, Bilaplacian operators and the Hessian matrix, respectively, 
while  the bold symbols $\nablab$ and $\Deltab$ denote the gradient 
and Laplacian operators  for vector fields, respectively. 
In addition, for smooth scalar and vectorial functions $\phi$ and 
$\vb=(v_1,v_2)$, we define the curl, divergence and rotational operators, as follow:
\begin{equation*}
\begin{split}
\curl \phi:=\begin{pmatrix} 
\quad \partial_y\varphi \\ 
-\partial_x\varphi
\end{pmatrix}&,
\qquad
\div\vb:=\partial_x
v_1+\partial_y v_2, \quad 
\quad \text{and} \quad	
\rot\vb:=\partial_x
v_2-\partial_y v_1.   
\end{split}
\end{equation*}

Henceforth, $\O$ will denote a simply connected  bounded domain 
of $\R^2$ with polygonal Lipschitz boundary $\Gamma:=\partial\O$.
The symbol $\nb=(n_i)_{1\le i\le2}$ is the outward unit 
normal vector to the boundary $\Gamma$, 
while the vector $\tb=(t_i)_{i=1,2}$ is the unit tangent 
to $\Gamma$ oriented such that $t_1 =-n_2$, $t_2 = n_1$. Moreover,
$\partial_{\nb}\phi= \nabla \phi \cdot \nb$ and $\partial_{\tb}\phi= \nabla \phi  \cdot \tb$
denote the normal and tangential derivatives, respectively.
In addition, $c$ or  $C$, with or without subscripts,
will represent a generic constant, which is independent of the 
mesh parameter $h$ that might have distinct values at different places.

\paragraph{The Navier-Stokes in velocity-pressure weak form.}
The standard variational formulation of problem~\eqref{NSE:velocity:pressure} reads as:
find $(\ub,p) \in \H \times Q$, such that 
\begin{equation}\label{standard:weak:NS}
\begin{split}
\nu (\nablab \ub,\nablab \vb)_{0,\O}+((\nablab\bu)\ub,\vb)_{0,\O}
		- (p, \div \vb)_{0,\O}  &= (\fb,\vb)_{0,\O} \qquad \forall \vb \in \H,\\
		- (g, \div \ub)_{0,\O}&= 0 \qquad \qquad \quad \: \forall g \in Q, \\
	\end{split} 
\end{equation}
where  the Hilbert spaces $\H$ and $Q$ are defined by:
\begin{equation}\label{spaces:H1:L20}
\H:=\left\{\vb \in H^1(\O)^2: \vb=\0 \qon \Gamma \right\}	\qquad \text{and} \qquad
Q:=\left\{g \in L^2(\O): (g,1)_{0,\O}=0  \right\}.
\end{equation}
It is well known that problem \eqref{standard:weak:NS} admits a unique solution (see \cite{GR}) 
under smallness assumption on the data.
Moreover, several works have been devoted to develop numerical schemes to approximate this formulation. 
For instance, see \cite{BLV-NS18,FM2020-IMAJNA,LC19,ZZM2021} in the VEM context.

In this work, we will study the Navier-Stokes equations with a different approach.
More precisely, under assumption that the domain is simply connected and by using 
the incompressibility condition of the velocity field (i.e., $\div \ub =0$), 
we write an equivalent variational formulation in terms of the \textit{stream-function} of the velocity field.
 \subsection{The stream-function weak form}

Since $\O\subset \R^2$ is simply connected, is well known 
that a vector function $\vb \in \Zb:= \left\{\vb \in\H: \div \vb =0 \right\}$ 
if and only if there exists a function $\varphi \in H^2(\O)$ 
(called \textit{stream-function}), such that $\vb =\curl \varphi$.

Let us consider the following Hilbert space 
$\Phi:=\left \{\varphi \in H^2(\O) : \varphi =0, \: \partial_{\nb}\varphi = 0  \qon \Gamma \right\}$,
and we endow this space  with the norm 
$\Vert\varphi\Vert_{2,\O}:=\left(  \bD^2 \varphi, \bD^2 \varphi\right)_{0,\O}^{1/2} \quad\forall\varphi\in \Phi$.
Then, we have that a variational formulation  
of problem~\eqref{NSE:velocity:pressure}, formulated in terms of  stream-function, read as (see for instance~\cite[Section 10.4]{QV}):
given $\fb \in L^2(\O)^2$, find $\psi\in \Phi$, such that
\begin{equation}\label{weak:stream:NS}
\nu A(\psi,\phi)+B(\psi;\psi,\phi)=  F(\phi) \qquad \forall \phi \in \Phi, 
\end{equation}
where the multilineal forms $A: \Phi \times \Phi \to \R$, $B:\Phi \times\Phi \times \Phi \to \R$
and $F: \Phi \to \R$ are defined by:  
\begin{align}
		A(\psi,\phi) &:=(\bD^2 \psi,\bD^2 \phi)_{0,\O}  \label{A-cont}, \\ 
	B(\zeta;\psi,\phi)& :=(\Delta \zeta \:
	\curl \psi, \nabla\phi)_{0,\O}, \label{B-cont}\\
	F(\phi) &:= (\fb,\curl\phi)_{0,\O}. \label{loadcont}	
\end{align}

From the definition of the bilinear form $A(\cdot,\cdot)$ and equivalence of norms, 
we obtain its $\Phi$-ellipticity. 
Moreover, by using the Cauchy-Schwarz  inequality is easily obtain: 
\begin{equation*}
\begin{split}
|A(\varphi,\phi)|  &\leq  \,  \|\varphi\|_{2,\O} \|\phi\|_{2,\O} \quad \quad \quad  \forall \varphi, \phi \in \Phi,\\
|F(\phi)| &\leq \,   C_F\|\fb\|_{0,\O} \|\phi\|_{2,\O}  \qquad  \forall  \phi \in \Phi,
\end{split} 
\end{equation*}
where $C_F$ is a positive constant. Now, we recall the following continuous Sobolev inclusion: 
for all  $\vb \in H^1(\O)^2$, 
there exists $\widetilde{C}_{{\rm sob}}>0$  such that
\begin{equation}\label{sob:cont}
\|\vb\|_{L^4(\O)} \leq  \widetilde{C}_{{\rm sob}}\|\vb\|_{1,\O}.
\end{equation}
Then, by using the H\"{o}lder inequality and the above inclusion, there exists 
$\CB:= \widetilde{C}_{{\rm sob}}^2>0$, such that 
\begin{equation*}
\begin{split}
|B(\zeta;\varphi,\phi)| &\leq \CB \, \|\zeta\|_{2,\O} \|\varphi\|_{2,\O} \|\phi\|_{2,\O} 
\qquad \forall \zeta, \varphi, \phi \in \Phi.
\end{split} 
\end{equation*}

From the above properties and the  fixed-point  Banach Theorem,
we can prove that problem~\eqref{weak:stream:NS} is well-posed.
More precisely, we have the following existence and uniqueness 
result (see for instance, \cite[Chapter IV, Section 2.2]{GR}).
\begin{theorem} 
	If $\CB C_{F} \nu^{-2} \|\fb\|_{0,\O} <1$,
	then there exists a unique $\psi\in \Phi$ solution to problem~\eqref{weak:stream:NS},
	which satisfies the following continuous dependence on the data
	\begin{equation*}
		\|\psi\|_{2,\O}\le C_{F}\nu^{-1} \|\fb\|_{0,\O}.	
	\end{equation*}
\end{theorem}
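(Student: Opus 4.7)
The plan is to apply the Banach fixed-point theorem to a solution map associated with a linearization of~\eqref{weak:stream:NS}. First, I would verify the skew-symmetry property $B(\zeta;\phi,\phi)=0$ for all $\zeta,\phi\in\Phi$. Integrating by parts and using $\div(\curl\phi)=0$ together with the homogeneous boundary conditions built into $\Phi$, the identity $(\Delta\zeta\,\curl\phi,\nabla\phi)_{0,\O}=\tfrac{1}{2}(\Delta\zeta\,\curl\phi,\nabla|\nabla\phi|^{2}/|\nabla\phi|^{2}\cdot\nabla\phi)\dots$ simplifies via $(\curl\phi\cdot\nabla)\phi$ being orthogonal to $\nabla\phi$, yielding the desired cancellation. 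More directly, $(\curl\phi,\nabla\phi)=0$ pointwise after antisymmetrization, so $B(\zeta;\phi,\phi)=0$.

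Next, I would define the linearized solution map $T:\Phi\to\Phi$ by $T(\zeta)=\psi$ where $\psi$ satisfies
\begin{equation*}
\nu A(\psi,\phi)+B(\zeta;\psi,\phi)=F(\phi)\qquad\forall\phi\in\Phi.
\end{equation*}
Well-posedness of this linear problem follows from Lax--Milgram once I observe that $\phi\mapsto B(\zeta;\psi,\phi)$ is continuous on $\Phi$ thanks to the already stated bound $|B(\zeta;\psi,\phi)|\le\CB\|\zeta\|_{2,\O}\|\psi\|_{2,\O}\|\phi\|_{2,\O}$, and that the bilinear form $a_\zeta(\psi,\phi):=\nu A(\psi,\phi)+B(\zeta;\psi,\phi)$ is coercive because $B(\zeta;\psi,\psi)=0$ and $A$ is $\Phi$-elliptic.

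Having $T$ in hand, I would establish invariance of the closed ball $B_R:=\{\zeta\in\Phi:\|\zeta\|_{2,\O}\le R\}$ with $R:=C_F\nu^{-1}\|\fb\|_{0,\O}$. Testing $T(\zeta)=\psi$ with $\phi=\psi$, the trilinear term vanishes, and the continuity estimate on $F$ yields $\nu\|\psi\|_{2,\O}^{2}\le C_F\|\fb\|_{0,\O}\|\psi\|_{2,\O}$, hence $\|\psi\|_{2,\O}\le R$. This simultaneously produces the announced continuous dependence bound. For the contraction estimate, I would set $\psi_i=T(\zeta_i)$, subtract the two linear problems, decompose
\begin{equation*}
B(\zeta_1;\psi_1,\phi)-B(\zeta_2;\psi_2,\phi)=B(\zeta_1-\zeta_2;\psi_1,\phi)+B(\zeta_2;\psi_1-\psi_2,\phi),
\end{equation*}
and test with $\phi=\psi_1-\psi_2$. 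The second summand vanishes by skew-symmetry, and using the $B$-continuity bound together with $\|\psi_1\|_{2,\O}\le R$ produces
\begin{equation*}
\|\psi_1-\psi_2\|_{2,\O}\le \CB C_F\nu^{-2}\|\fb\|_{0,\O}\,\|\zeta_1-\zeta_2\|_{2,\O}.
\end{equation*}
The smallness assumption makes this a strict contraction, and the Banach fixed-point theorem delivers a unique fixed point $\psi\in B_R$ solving~\eqref{weak:stream:NS}; uniqueness in all of $\Phi$ follows by repeating the contraction argument for two arbitrary solutions, both of which automatically lie in $B_R$ by the a priori estimate.

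The step I expect to require the most care is the verification of $B(\zeta;\phi,\phi)=0$, since the structure $(\Delta\zeta\,\curl\phi,\nabla\phi)_{0,\O}$ does not display the cancellation at first sight and one must exploit the pointwise orthogonality $\curl\phi\cdot\nabla\phi=0$; every other step is a direct consequence of the a priori continuity and ellipticity estimates already stated in the excerpt.
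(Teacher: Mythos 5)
Your proposal is correct and follows essentially the same route the paper takes: the paper itself gives no detailed proof but invokes the multilinear-form bounds, the $\Phi$-ellipticity of $A$, and the Banach fixed-point theorem (citing Girault--Raviart), which is precisely the linearize-and-contract argument you carry out. The only cosmetic remark is that your first attempt at the skew-symmetry $B(\zeta;\phi,\phi)=0$ is muddled; the clean justification is the one you give second, namely the pointwise identity $\curl\phi\cdot\nabla\phi=\partial_y\phi\,\partial_x\phi-\partial_x\phi\,\partial_y\phi=0$, which needs no integration by parts.
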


Now, we state an additional regularity result
for the solution of problem~\eqref{weak:stream:NS} (see for instance \cite{BR80}).

\begin{theorem}\label{regu-aditional}
Let $\psi \in \Phi$ be the unique solution of problem \eqref{weak:stream:NS}.
Then, there exist $\gamma\in(1/2,1]$ and $C_{\reg}>0$,
such that $\psi\in H^{2+\gamma}(\O)$ and
\begin{equation*}
\|\psi\|_{2+\gamma,\O} \leq C_{\reg}\|\fb\|_{0,\O}.
	\end{equation*}	
\end{theorem}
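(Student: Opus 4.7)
The plan is to recast the weak formulation \eqref{weak:stream:NS} as a linear clamped biharmonic problem whose right-hand side absorbs the convective term, and then to invoke the classical shift theorem of Blum--Rannacher \cite{BR80} for the biharmonic operator on a simply connected polygonal Lipschitz domain. Concretely, I would view $\psi$ as the unique solution of the Dirichlet clamped plate problem $\nu \Delta^2 \psi = G$ in $\O$ with $\psi = \partial_{\nb}\psi = 0$ on $\Gamma$, where the functional $G \in \Phi^{*}$ is identified, via \eqref{weak:stream:NS}, with
$$\langle G,\phi\rangle \;=\; F(\phi) \;-\; B(\psi;\psi,\phi) \;=\; (\fb,\curl\phi)_{0,\O} \;-\; (\Delta\psi\,\curl\psi,\nabla\phi)_{0,\O} \qquad \forall \phi \in \Phi.$$

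The first task is to measure $G$ in a Sobolev space slightly better than $H^{-2}(\O)$. The source contribution $(\fb,\curl\phi)_{0,\O}$ is controlled by $\|\fb\|_{0,\O}\|\phi\|_{1,\O}$, hence corresponds to a functional in $H^{-1}(\O)$. For the convective term I would exploit $\div\curl\psi=0$ and integrate by parts to write
$$(\Delta\psi\,\curl\psi,\nabla\phi)_{0,\O} \;=\; -(\nabla(\Delta\psi)\cdot\curl\psi,\phi)_{0,\O},$$
then bound the tensor $\Delta\psi\,\curl\psi$ in $L^{q}(\O)$ for some $q>1$ using that $\Delta\psi\in L^{2}(\O)$ and, via the Sobolev embedding $H^{1}(\O)\hookrightarrow L^{p}(\O)$ for every $p<\infty$ applied to $\curl\psi$, one has $\curl\psi\in L^{p}(\O)$ for all such $p$. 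By H\"older's inequality this places $\Delta\psi\,\curl\psi$ in $L^{2-\epsilon}(\O)$ for arbitrary small $\epsilon>0$, and gives a bound of the form $\|\Delta\psi\|_{0,\O}\,\|\curl\psi\|_{L^{p}(\O)} \leq C\|\psi\|_{2,\O}^{2}$ for the corresponding dual norm.

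Next I would apply the Blum--Rannacher shift theorem for the clamped biharmonic problem on the polygonal Lipschitz domain $\O$: there exists $\gamma\in(1/2,1]$, determined exclusively by the largest interior angle of $\Gamma$, such that the solution enjoys the regularity $\psi\in H^{2+\gamma}(\O)$ with a quantitative estimate of the form $\|\psi\|_{2+\gamma,\O}\leq C\|G\|_{(H^{2-\gamma}_{0}(\O))^{*}}$. Combining this with the estimate of $G$ obtained in the previous step yields
$$\|\psi\|_{2+\gamma,\O} \;\leq\; C\bigl(\|\fb\|_{0,\O} + \|\psi\|_{2,\O}^{2}\bigr),$$
and the a priori estimate $\|\psi\|_{2,\O}\leq C_{F}\nu^{-1}\|\fb\|_{0,\O}$ from the previous theorem, together with the small-data hypothesis $\CB C_{F}\nu^{-2}\|\fb\|_{0,\O}<1$, allows to absorb the quadratic contribution linearly in $\|\fb\|_{0,\O}$, producing the claimed bound with $C_{\reg}$ depending only on $\O$, $\nu$, and $C_{F}$.

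The main obstacle is the limited integrability of the nonlinear factor $\Delta\psi$, which is only square-integrable: this prevents any attempt to recover a full $H^{3}$-shift via the standard linear argument and forces one to work with a fractional exponent $\gamma<1$, using the $H^{1}\hookrightarrow L^{p}$ embedding to buy the crucial $\epsilon$ of integrability. The second delicate point is geometric: on a nonconvex polygon the shift exponent $\gamma$ can be arbitrarily close to $1/2$ from above, and the fact that Blum--Rannacher guarantees $\gamma>1/2$ in all cases is exactly what is needed to close the bootstrap here, and will also justify the minimal-regularity assumption invoked later in the error analysis.
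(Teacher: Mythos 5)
Your argument is correct and is essentially the proof the paper delegates to the citation of Blum--Rannacher \cite{BR80}: rewrite \eqref{weak:stream:NS} as a clamped biharmonic problem whose right-hand side $G=F-B(\psi;\psi,\cdot)$ is measured in a space slightly better than $H^{-2}(\O)$, apply the polygonal shift theorem, and absorb the quadratic term via the a priori bound and the small-data condition. The only cosmetic remark is that no genuine bootstrap is needed (one pass suffices, and if one did iterate, $\psi\in H^{2+\gamma_0}$ with $\gamma_0>0$ already gives $\curl\psi\in L^{\infty}(\O)$ and hence $G$ in the full $H^{-1}(\O)$ class), and that your $L^{2-\epsilon}$ bound on $\Delta\psi\,\curl\psi$ delivers any $\gamma$ strictly below the domain-determined shift index, which is all the statement $\gamma\in(1/2,1]$ requires.
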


\section{Morley-type virtual element approximation}\label{VEM-disc}
\setcounter{equation}{0}
This section is devoted to the construction of a VEM
to solve problem~\eqref{weak:stream:NS}. We  will introduce
a Morley-type VE space by using some auxiliaries local virtual spaces and 
the enhancement technique. More precisely, the present framework is based on the  discrete Stokes 
complex sequence for the {\it Morley}- and  {\it Crouzeix-Raviart}-type VE spaces presented in~\cite{ZZMC19}.
This Stokes complex structure will allow us to approximate the main unknown in problem~\eqref{weak:stream:NS}
and as an important topic, also it will allow to compute the pressure variable of the Navier-Stokes 
system~\eqref{NSE:velocity:pressure} as a postprocess,  by solving a Stokes-like problem 
with right hand side coming form the discrete-stream function solution and  force density $\fb$ 
(cf. subsection~\ref{rec:pressure}).

We start with a subsection introducing the polygonal decompositions 
and some useful notations, these preliminaries are
following by a subsection on the local and global 
nonconforming virtual spaces, their degrees of freedom and the classical VEM local projectors.
Later on, we introduce other polynomial projections useful to  build  the discrete trilinear form. 

\subsection{The polygonal decompositions and basic setting}
\label{Notations}
Let $\{\CTh\}_{h>0}$ be a sequence of decompositions of $\O$ 
into general non-overlapping simple polygons $\E$, 
where $h:=\max_{\E\in\CTh}h_\E$ and $h_\E$ is the diameter of $\E$.  
We will denote by  $\partial \E$, $N_{\E}$ and $|\E|$ the boundary, 
the number of vertices and area of each polygon $\E$, respectively.

For each element $\E$ we denote by $\EEh^\E$ the set of its edges, while  the set of  all the 
edges in $\CTh$ will be denote by $\EE_h$. We decompose this set as
the following union: $\EE_h := \EEi \cup \EEbdry$, where $\EEi$ and $\EEbdry$ are the set of
interior and boundary edges, respectively.  For the set of all the vertices we have an analogous 
notation. More precisely,  we will denote  by  $\VV_h := \VVi \cup \VVbdry$ the set of vertices
in $\CTh$, where  $\VVi$ and $\VVbdry$ are the set of interior and boundary vertices, respectively. 
In addition, we denote by $e$ a generic edge of $\EE_h$ and by $h_e$ its length.   
 
Besides, for each $\E \in \CT_h$, we denote by $\nb_{\E}$ its unit outward normal vector 
and by $\tb_{\E}$ its tangential vector along the boundary $\partial \E$. 
Moreover, we will adopt the notation $\nb_e$ and $\tb_{e}$
for a unit normal and tangential vector of an edge $e \in \EE_h$, respectively.

For every $\ell>0$ and $q\in [1,+\infty)$, we define the following broken Sobolev spaces
\begin{equation*}
W^{\ell,q}(\CTh):= \{ \phi \in L^2(\O): \phi|_{\E} \in W^{\ell,q}(\E)  \quad \forall \E \in \CTh\},
\end{equation*}
and we endow these spaces with the following broken seminorm:
\begin{equation*}
|\phi|_{\ell,q,h}:=\Big( \: \sum_{\E\in\CTh}|\phi|_{\ell,q,\E}^{q} \Big)^{1/q},
\end{equation*}
where $|\cdot|_{\ell,q,\E}$ is the usual seminorm in $W^{\ell,q}(\E)$. 
When $q=2$, we omit $q$ and write $H^{\ell}(\CTh)$ instead $W^{\ell,2}(\CTh)$, with the corresponding
seminorm denoted by $|\cdot|_{\ell,h}$.

Next, we will define the jump operator. First, for each $\phi_h \in H^2(\CTh)$,  
we denote by $\phi_h^{\pm}$  the trace of $\phi_h|_{\E^{\pm}}$, with 
$e \subset \partial \E^{+} \cap \partial \E^{-}$. 
Then, the jump operator $\jump{\cdot}$ is defined as follows:
 \begin{equation*}
 \jump{\phi_h} 
 	:= \begin{cases}
 	\phi_h^{+}-\phi_h^{-} & \text{for every  $e \in \EEi$,}\\
 	\phi_h|_e  &  \text{for every  $e \in \EEbdry$}.
 	\end{cases}
 \end{equation*}
The same notation is adopted for vectorial fields. Let us define a subspace of $H^2(\CTh)$ with certain continuity: 
\begin{equation*}
\begin{split}
\HncT := \Big\{ \phi_h \in H^2&(\CTh) : \phi_h  \in C^0(\VVi), \quad \phi_h(\bv_i)=0 \quad  \forall \bv_i \in \VVbdry, \quad  (\jump{\partial_{\nb_e} \phi_h},1)_{0,e}=0 \quad \forall  e \in \EEh \Big\},
\end{split}
\end{equation*}
where $C^0(\VVi)$ is the set of functions continuous at internal vertexes.

Finally, for each subset $\calD \subset\R^2$ and 
every integer $\ell\geq 0$,  $\P_{\ell}(\calD)$ is the space of 
polynomials of degree up to $\ell$ defined on $\calD$. 
Furthermore, the piecewise $\ell$-order polynomial space is defined by:
\begin{equation*}
	\P_{\ell}(\CTh):= \{ \chi \in L^2(\O): \chi|_{\E} \in \P_{\ell}(\E)  \quad \forall \E \in \CTh\}.
\end{equation*}

In what follows, we will introduce some preliminary spaces, which are useful to construct
the Morley-type VE space to approximate the solution of problem~\eqref{weak:stream:NS}. 
\subsection{Some auxiliary spaces}
For every polygon $\E\in\CTh$, first we consider the following auxiliary finite dimensional space~\cite{AMV2018,ZZCM2018,LZHC2021}:
\begin{align*}
\VtK
:= \left\{\phi_h\in \HdoK : \Delta^2\phi_h\in\P_{2}(\E),  \: \:\phi_h|_e\in\P_{2}(e),
\: \: \Delta \phi_h|_e\in\P_{0}(e)\quad \forall e\in\partial\E\right\}.
\end{align*}

Next, for a given $\phi_h\in\VtK$, we introduce the following sets:
\begin{itemize}
	\item $\DVu$: the values of $\phi_h(\bv_i)$ for all vertex $\bv_i$ of the polygon $\E$;
	\item $\DVd$: the edge moments $( \partial_{\nb_{e}}\phi_h,1)_{0,e} \quad  \forall \, 
	\text{edge} \, \, e \in \EE_h^{\E}$.
\end{itemize}

For each polygon $\E$, we define the following projector $\PiK:\VtK  \to \P_2(\E)\subseteq\VtK$,
as the solution of the local problems:
\begin{align*}
	A^{\E}( \PiK \phi_h, \chi) & = A^{\E}(\phi_h , \chi) \quad  \forall \chi \in \P_{2}(\E), \\
	\langle \langle \: \PiK \phi_h, \chi \: \rangle \rangle_{\E}&=\langle \langle \phi_h, \chi \rangle \rangle_{\E} \quad  \forall \chi \in \P_{1}(\E), 
\end{align*}
where $\langle \langle \varphi_h, \phi_h \rangle \rangle_{\E}$ is defined as follows:
\begin{equation*}
\langle \langle \varphi_h, \phi_h \rangle \rangle_{\E}:= \sum_{i=1}^{N_\E}
	\varphi_h (\bv_i)	\phi_h (\bv_i),
\end{equation*}
with $\bv_i$, $1\le i \le N_{\E}$, being the vertices of $\E$ and $A^{\E}(\cdot,\cdot)$ 
is the restriction of the  continuous form $A(\cdot,\cdot)$ (cf.~\eqref{A-cont}) on the element $\E$.

The operator $\PiK :\VtK \to \P_{2}(\E)$ is explicitly
computable for every $\phi_h \in \VtK$, using only the
information of the linear operators $\DVu-\DVd$ 
(for further details, we refer to~\cite{ZZCM2018}).

Now, we will introduce another auxiliary local spaces. Indeed, following \cite{ZZMC19} we define the spaces: 
\begin{equation*}
	\widehat{\CRS}(\E):= \Big\{\vb_h \in H^1(\E)^2 : \div \vb_h \in \P_0(\E), \: \rot \vb_h \in \P_0(\E), \: \vb_h \cdot \nb_{e} \in \P_1(e) \quad \forall e \in \EEh^\E \Big\}, 
\end{equation*} 
and
\begin{equation*}
	\widetilde{\calZ}(\E):= \Big\{ \phi \in H^2(\E) : \Delta^2 \phi_h =0,\: \phi_h|_e =0, \:\Delta \phi_h|_e \in \P_0(e) \quad \forall e \in \EE_h^{\E} \Big\}.
\end{equation*}

By adding $\widehat{\CRS}(\E)$ and $\curl$ of the functions belongs to $\widetilde{\calZ}(\E)$, we define the space
\begin{equation*}
	\CRS_0(\E):=\widehat{\CRS}(\E)+ \curl(\widetilde{\calZ}(\E)).
\end{equation*} 
Then, for each $\vb_h \in \CRS_0(\E)$  we introduce the set of vector-valued, bounded linear functional
\begin{itemize}
\item $\DU$: the edge moments $h^{-1}_e  (\vb_h, \boldsymbol{1})_{0,e} \qquad \forall e \in \EE_h^{\E}.$
\end{itemize} 
We observe that $\P_1(\E)^2 \subset\CRS_0(\E)$, and we introduce the grad projection operator $\Pivec: \CRS_0(\E) \rightarrow \P_1(\E)^2$ as the solution of the following problem:
\begin{equation}\label{proj:H1:vec}
\begin{split}
&( \nablab (\Pivec \vb_h-\vb_h), \nablab\chib)_{0,\E} =0 \quad \forall \chib \in \P_1(\E)^2, \\
& (\Pivec \vb_h-\vb_h,1)_{0,\partial \E}=0.
\end{split}
\end{equation}

By using an integration by parts, we can deduce that the polynomial $\Pivec \vb_h$ is computable for all $\vb_h \in \CRS_0(\E)$ from the set of values $\DU$ (see \cite{ZZMC19}). 

Next, by employing the grad projection operator $\Pivec$, we define the local Crouzeix-Raviart-like  VE space 
\begin{equation*}\label{local:CR:VE}
	\CRS_h(\E):=\Big\{\vb_h \in \CRS_0(\E) :  (\vb_h \cdot \nb_e - \Pivec\vb_h  \cdot \nb_e, \chi)_{0,e}  \quad \forall \chi \in \P_1(e) \setminus \P_0(e), \quad \forall e \in \EE_h^{\E}\Big\}.
\end{equation*}

Further, from \cite{ZZMC19} we have that the set $\DU$ characterize  uniquely the functions of $\CRS_h(\E)$.  
Moreover, for each  $\phi_h \in \VtK$, the function  $ \Pivec \curl\phi_h $  is  computable using the sets $\DVu$ and $\DVd$. 

The global Crouzeix-Raviart-like space is  defined  as follows~\cite{ZZMC19}:
\begin{equation}
	\label{CR_Glob}
	\CRS_h:= \Big\{\vb_h \in L^2(\Omega)^2 : \vb_h|_{\E} \in \CRS_h(\E) \quad \forall \E \in \CT_h, \quad (\jump{\vb_h}, \boldsymbol{1})_{0,e} =0  \quad \forall e \in \EE_h \Big\}.
\end{equation}

We have that the dimension of the space $\CRS_h$ is equal to $2N^{\EE_h}$, where $N^{\EE_h}$ is the total number of mesh edges of the discretization $\CTh$. This space will be useful in subsection~\ref{rec:pressure} to present the pressure recovery technique. 

\begin{remark}
	The nonconforming VE space defined in \eqref{CR_Glob} coincides with the Crouzeix-Raviart finite element space when the polygon $K$ is a triangle. Therefore, this space can be seen as an extension of the classical Crouzeix-Raviart space from triangle to polygonal element in the nonconforming VEM context. For further details  of this discussion, see~\cite[Remark 8]{ZZMC19}.
\end{remark}

\subsection{The Morley-type nonconforming virtual element space}
By using the auxiliary spaces defined in the above subsection, for each $\E \in \CT_h$  
we introduce the local Morley-type VE space~\cite{ZZMC19}:
\begin{equation}\label{local:space:nc}
\begin{split}
\VK := \Big\{ \phi_h \in \VtK &:  (\curl\phi_h \cdot \nb_e-\Pivec (\curl\phi_h \cdot \nb_e), \chi)_{0,e}=0 \quad \forall \chi \in \P_1(e)\setminus \P_0(e)  \quad \forall e \in\EEh^\E, \\
&\quad  (\phi_h- \PiK \phi_h,\chi)_{0,\E} =0 \quad \forall \chi \in  \P_{2}(\E)\Big\}.
\end{split}
\end{equation}
In the next result we summarize the main properties of the local Morley-type VE space.  
\begin{lemma}\label{lemma:prop:Morley}
For each polygons $\E$, the space $\VK $ defined in \eqref{local:space:nc}, we have  $\P_2(\E) \subseteq \VK$. 
Moreover, we can deduce the following properties: 
	\begin{itemize}
\item The linear operators $\DVu-\DVd$ constitutes 
	   a set of degrees of freedom for $\VK$;
\item The operator $\PiK :\VK \to \P_{2}(\E)$ is 
     computable using the sets $\DVu-\DVd$;
 \item  For each  $\phi_h \in \VK$, the function  $ \Pivec \curl\phi_h $  
     is  computable using the degrees of freedom $\DVu-\DVd$.
	\end{itemize}
\end{lemma}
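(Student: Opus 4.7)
My plan is to prove the four bullets in order, exploiting the enhancement structure defining $\VK$ together with the classical biharmonic-type virtual space analysis of \cite{ZZCM2018,ZZMC19}.

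For the polynomial inclusion $\P_2(\E)\subseteq\VK$, I would take an arbitrary $p\in\P_2(\E)$ and verify the three layers of definition in turn. Membership in $\VtK$ is immediate since $\Delta^2 p=0\in\P_2(\E)$, $p|_e\in\P_2(e)$, and $\Delta p\in\P_0(\E)$ yields $\Delta p|_e\in\P_0(e)$. For the first enhancement condition I would use that $\curl p\in\P_1(\E)^2\subset\CRS_0(\E)$; since $\Pivec$ fixes $\P_1(\E)^2$, one gets $\Pivec\curl p=\curl p$, and the edge moment identity is trivially satisfied. For the second enhancement condition, I would invoke polynomial invariance of $\PiK$: the defining relations of $\PiK$ against test functions in $\P_2(\E)$ and $\P_1(\E)$ are solved by $\PiK p=p$, which makes $(p-\PiK p,\chi)_{0,\E}=0$ automatic.

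For the unisolvence of $\DVu\cup\DVd$ against $\VK$, I would first note the dof count $2N_\E$ and then argue that if $\phi_h\in\VK$ kills every linear functional in $\DVu\cup\DVd$, then $\phi_h\equiv0$; the matching dimension claim $\dim\VK=2N_\E$ would then follow. The boundary analysis goes edge by edge: since $\phi_h|_e\in\P_2(e)$ vanishes at both endpoints, it is a multiple of the quadratic edge bubble; the enhancement $(\curl\phi_h\cdot\nb_e-\Pivec\curl\phi_h\cdot\nb_e,\chi)_{0,e}=0$ for $\chi\in\P_1(e)\setminus\P_0(e)$ provides the linear-moment matching that, combined with $\curl\phi_h\cdot\nb_e=-\partial_{\tb_e}\phi_h$ and $\Pivec\curl\phi_h\in\P_1(\E)^2$, is strong enough to force the bubble coefficient to zero, so $\phi_h|_e\equiv 0$ on each edge. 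Together with $\DVd=0$ and the polynomial constraint $\Delta\phi_h|_e\in\P_0(e)$, one then obtains that $\partial_{\nb_e}\phi_h$ also vanishes in the relevant sense. Finally, the remaining interior degrees of freedom would be killed using the second enhancement condition $(\phi_h-\PiK\phi_h,\chi)_{0,\E}=0$ for all $\chi\in\P_2(\E)$, together with $\PiK\phi_h=0$ (because $\PiK$ depends only on the dofs, which vanish), yielding $\phi_h\equiv 0$ in $\E$ by uniqueness for the biharmonic problem with $\Delta^2\phi_h\in\P_2(\E)$, zero boundary data, and vanishing $L^2$-projection onto $\P_2(\E)$.

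Computability of $\PiK\phi_h$ from $\DVu-\DVd$ follows the classical polygonal integration-by-parts recipe. The vertex-average condition $\langle\langle\PiK\phi_h,\chi\rangle\rangle_\E=\langle\langle\phi_h,\chi\rangle\rangle_\E$ uses $\DVu$ directly. For $A^\E(\phi_h,\chi)$ with $\chi\in\P_2(\E)$, I would integrate by parts twice to obtain
\[
A^\E(\phi_h,\chi)=\int_\E\phi_h\,\Delta^2\chi\,\dx+\sum_{e\in\EEh^\E}\!\!\Big[(\partial_{\nb_e}\phi_h,\partial_{\nb_e}\partial_{\nb_e}\chi)_{0,e}-(\phi_h,\partial_{\nb_e}\Delta\chi)_{0,e}+(\text{tangential terms})\Big],
\]
and since $\Delta^2\chi=0$ and second derivatives of $\chi\in\P_2(\E)$ are constants, every remaining boundary integral reduces either to the edge mean of $\partial_{\nb_e}\phi_h$ (supplied by $\DVd$) or to edge integrals of $\phi_h|_e\in\P_2(e)$, which are determined by $\DVu$ together with the quadratic trace structure established in the unisolvence step.

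Computability of $\Pivec\curl\phi_h$ for $\phi_h\in\VK$ follows the same philosophy: by the definition \eqref{proj:H1:vec}, I integrate by parts to reduce the first equation to boundary integrals of $\curl\phi_h\cdot\nb_e=-\partial_{\tb_e}\phi_h$ against gradients of polynomial test functions on $\partial\E$, which are computable from the edge traces $\phi_h|_e$; the normalization $(\Pivec\curl\phi_h-\curl\phi_h,1)_{0,\partial\E}=0$ reduces analogously to boundary data. I expect the main obstacle to lie in the unisolvence proof, specifically in showing that the combination of the two enhancement conditions (one scalar edge moment per edge plus the interior $\P_2$ moments) truly kills the residual degrees of freedom left by $\DVu\cup\DVd$; this is delicate because it mixes the boundary trace structure with the polynomial biharmonic kernel inside $\E$.
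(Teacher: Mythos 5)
The paper never proves this lemma: it is stated as a summary of properties imported from \cite{ZZMC19} and \cite{ZZCM2018}, so your reconstruction is being measured against the standard arguments in those references rather than against anything in the text. Your overall route is the right one and matches that standard treatment: direct verification of the three layers of the definition for the polynomial inclusion (using that $\Pivec$ and $\PiK$ fix $\P_1(\E)^2$ and $\P_2(\E)$ respectively), unisolvence by showing that vanishing dofs force $\phi_h\equiv 0$, and computability of $\PiK\phi_h$ and $\Pivec\curl\phi_h$ by integration by parts down to edge data. The first and third bullets are essentially complete as you describe them (for $\chi\in\P_2(\E)$ the term $(\phi_h,\partial_{\nb_e}\Delta\chi)_{0,e}$ you worry about vanishes outright since $\Delta\chi$ is constant, and the surviving boundary integrals reduce to $\int_e\nabla\phi_h = (\int_e\partial_{\nb_e}\phi_h)\,\nb_e + (\phi_h(\bv_{e,2})-\phi_h(\bv_{e,1}))\,\tb_e$, i.e.\ exactly $\DVd$ and $\DVu$).

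Three steps in the unisolvence argument are, however, underspecified. First, injectivity of the dof map only gives $\dim\VK\le 2N_\E$; to have a set of degrees of freedom you also need the lower bound, which comes from counting: $\dim\VtK=3N_\E+6$ (continuous piecewise-quadratic trace, $N_\E$ edge constants for $\Delta\phi_h$, six for $\Delta^2\phi_h\in\P_2(\E)$) minus at most $N_\E+6$ enhancement constraints. Second, in the bubble-killing step the edge condition compares the first moment of $\partial_{\tb_e}\phi_h$ with that of $\Pivec\curl\phi_h\cdot\nb_e$; knowing merely that $\Pivec\curl\phi_h\in\P_1(\E)^2$ is not enough, since a nonzero linear field can have a nonzero first moment on $e$. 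You must first show $\Pivec\curl\phi_h=0$, which does follow from the vanishing dofs because $\Pivec\curl\phi_h$ is determined by the edge averages $\int_e\nabla\phi_h$ above --- but this is precisely the computability statement of the fourth bullet and has to be invoked at this point. Third, the concluding ``uniqueness for the biharmonic problem'' is not an off-the-shelf Dirichlet uniqueness: the mechanism is $\|\Delta\phi_h\|_{0,\E}^2=(\Delta^2\phi_h,\phi_h)_{0,\E}-(\partial_{\nb}\Delta\phi_h,\phi_h)_{0,\partial\E}+\sum_e\Delta\phi_h|_e\int_e\partial_{\nb_e}\phi_h$, where the last two terms vanish by the zero trace and the vanishing $\DVd$ moments, and the first equals $(\Delta^2\phi_h,\PioK\phi_h)_{0,\E}=0$; then $\Delta\phi_h=0$ with zero trace gives $\phi_h=0$. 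Two small inaccuracies: with the paper's orientation $t_1=-n_2$, $t_2=n_1$ one has $\curl\phi_h\cdot\nb_e=+\partial_{\tb_e}\phi_h$ (harmless), and the reduction of $\Pivec\curl\phi_h$ to boundary data needs the normal moments $\DVd$ as well as the traces, since $(\curl\phi_h,(\nablab\chib)\nb_e)_{0,e}$ involves both components of $\nabla\phi_h$ on $e$.
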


With the above preliminaries we can introduce the global Morley-type VE  space to  
the numerical approximation of the problem~\eqref{weak:stream:NS}. 
Indeed, for every decomposition $\CTh$ of $\O$ into polygons $\E$, 
the global nonconforming VE space is given by:
\begin{equation}\label{global:space:fnc}
\begin{split}
\Vh := \left\{ \phi_h \in H^{2,\nc}(\CT_h): \phi_h|_{\E} \in \VK, \quad \forall  \E \in \CT_h\right\}.
\end{split}
\end{equation}

We have that $\Vh \subset H^{2,\nc}(\CT_h)$, but $\Vh \nsubseteq \Phi$. 
Moreover, we observe that the nonconforming  VE does not require 
that the $C^0$-continuity over $\O$. This space can be seen as an extension 
of the popular Morley FE \cite{Morley} to general polygonal meshes. 
For further details about this discussion, we refer to~\cite[Remark 20]{ZZMC19} 
and \cite[Remark 4.1]{ZZCM2018}.

For the continuous bilinear form $A(\cdot,\cdot)$, we adopt the following notation:
\begin{equation*}
	A(\varphi_h, \phi_h) := \sum_{\E \in \CT_h} A^{\E}(\varphi_h, \phi_h) 
	\qquad \forall \varphi_h, \phi_h \in \Phi+\Vh.
\end{equation*}
We also adopt the same notation by the continuous forms $B(\cdot;\cdot,\cdot)$ and $F(\cdot)$.

\subsection{Polynomial projection operators and discrete multilinear forms}
\label{projec:and:forms}
This subsection is dedicated to the presentation of other important polynomial projections, 
along with the construction of the trilinear form and the load term, by using such projections. 
Moreover, we build the bilinear discrete form.

For each $m \in \N \cup \{0\}$, we consider the usual $L^2$-projection, 
$\Pi_{\E}^{m}: L^2(\E)  \to \P_{m}(\E)$, defined by the function 
such that 
\begin{equation}\label{proymdos}
( \phi-  \Pi_{\E}^{m} \phi,\chi )_{0,\E}=0 
\qquad\forall \chi\in\P_{m}(\E).
\end{equation}
Moreover, we define its vectorial ${\bf \Pi}_\E^m$ version in an analogous way. 
For the projection previously defined  we have the following result.

We recall that there exists $\CBD>0$ such that (see \cite{BLV-NS18}):
\begin{equation}\label{bound:proj}
\| \Pi_{\E}^{m} \phi \|_{L^4(\E)} \leq \CBD \| \phi \|_{L^4(\E)} \qquad \text{and} \qquad
\| \Pi_{\E}^{m} \phi \|_{0,\E} \leq \| \phi \|_{0,\E}  \quad \forall \phi \in L^2(\E). 
\end{equation}
\begin{lemma}\label{lemm-PioK}
Let $\PioK,\PimoK$ and $\PiunoKb$ be the operators defined by relation \eqref{proymdos} and by its vectorial 
version. Then,  for each $\phi_h \in \VK$,  the polynomial functions	
$\PioK \phi_h,  \PimoK \Delta \phi_h, \PiunoKb\curl \phi_h$ and $\PiunoKb\nabla  \phi_h$
are computable using only the information of the degrees freedom $\DVu-\DVd$.
\end{lemma}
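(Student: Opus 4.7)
The plan is to handle the four projections in turn, the first two being essentially immediate while the last two require the enhancement structure of $\VK$ in an essential way.

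First, for $\PioK \phi_h$, the second enhancement in the definition of $\VK$ (cf.~\eqref{local:space:nc}), namely $(\phi_h - \PiK \phi_h, \chi)_{0,\E} = 0$ for all $\chi \in \P_2(\E)$, directly yields $\PioK \phi_h = \PioK \PiK \phi_h = \PiK \phi_h$, since $\PiK \phi_h \in \P_2(\E)$; and $\PiK$ is known to be computable from $\DVu$--$\DVd$ (see~\cite{ZZCM2018}). For $\PimoK \Delta \phi_h$, the $L^2$-projection onto constants equals $|\E|^{-1}(\Delta \phi_h, 1)_{0,\E}$, and the divergence theorem rewrites this as $|\E|^{-1} \sum_{e \in \EE_h^\E} (\partial_{\nb_e} \phi_h, 1)_{0,e}$, a direct sum of edge moments in $\DVd$.

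For $\PiunoKb \nabla \phi_h$ and $\PiunoKb \curl \phi_h$, I would test the defining identity against an arbitrary $\chib \in \P_1(\E)^2$ and integrate by parts, obtaining
\[
(\nabla \phi_h, \chib)_{0,\E} = -(\phi_h, \div \chib)_{0,\E} + (\phi_h, \chib \cdot \nb_\E)_{0,\partial \E},
\]
together with the analogous identity $(\curl \phi_h, \chib)_{0,\E} = (\phi_h, \rot \chib)_{0,\E} - (\phi_h, \chib \cdot \tb_\E)_{0,\partial \E}$. Because $\div \chib$ and $\rot \chib$ lie in $\P_0(\E) \subset \P_2(\E)$, the interior contributions reduce to $(\PioK \phi_h, \cdot)_{0,\E}$ and are already computable by the first step. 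The main obstacle is the boundary term, because $\phi_h|_e \in \P_2(e)$ carries three scalar degrees of freedom per edge while $\DVu$ supplies only the two endpoint values.

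The gap is closed by the first enhancement in \eqref{local:space:nc}. Since $\curl \phi_h \cdot \nb_e = \partial_{\tb_e} \phi_h$, which in an arc-length parameter on $e$ is just the derivative of $\phi_h|_e \in \P_2(e)$, that condition matches the non-constant part of this $\P_1(e)$ function with that of $\Pivec \curl \phi_h \cdot \nb_e$, thereby pinning down the quadratic coefficient of $\phi_h|_e$; the endpoint values in $\DVu$ then determine the linear part, so $\phi_h|_e$ is fully reconstructible from $\DVu$--$\DVd$. The required computability of $\Pivec \curl \phi_h$ is the third item of Lemma~\ref{lemma:prop:Morley}. With $\phi_h|_e$ explicitly at hand, the boundary integrals above are evaluated exactly and the claim follows.
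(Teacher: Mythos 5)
Your proof is correct and follows essentially the same route as the paper: the same integration by parts reducing the volume term to $(\PioK\phi_h,\cdot)_{0,\E}$ plus an edge term, and the same divergence-theorem identity for $\PimoK\Delta\phi_h$. You actually go one step further than the paper by justifying explicitly why the boundary integral $(\phi_h,\chib\cdot\tb_{\E})_{0,\partial\E}$ is computable — reconstructing the quadratic trace $\phi_h|_e$ from the edge constraint on $\curl\phi_h\cdot\nb_e=\partial_{\tb_e}\phi_h$ together with the vertex values — a point the paper's proof leaves implicit.
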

\begin{proof}
Let $\phi_h \in \VK$, the proof of the function $\PioK \phi_h$ 
follows from the definition of the space $\VK$ (cf. \eqref{local:space:nc}). 
Moreover, using integration by parts  we obtain 
\begin{equation*}
\begin{split}
(\curl \phi_h,\boldsymbol{\chi})_{0,\E} 
=\rot \boldsymbol{\chi}(\PioK \phi_h,1)_{0,\E} -(\phi_h,\chib\cdot \tb_{\E}))_{0,e} 
\qquad \forall \chib \in \P_{1}(\E)^2,
   	\end{split}
   \end{equation*}
then we also conclude that the 
$\PiunoKb\curl \phi_h$ is fully computable from the degrees of freedom.
Similarly, we prove that function $\PiunoKb\nabla \phi_h$
is computable from the degrees of freedom $\DVu-\DVd$.
   
Next, we will prove that the polynomial function 
$\PimoK \Delta \phi_h$ is also computable. Indeed,
using integration by parts, we have 
\begin{equation*}
	\PimoK \Delta \phi_h  = |\E|^{-1} (\partial_{\nb_{\E}}  \phi_h,1)_{0,\partial\E}=
	|\E|^{-1}\sum_{e\in \partial \E}  (\partial_{\nb_e}  \phi_h,1)_{0,e},
\end{equation*}
and note that the above integral is computable  using the output values of the set $\DVd$. 
\end{proof}

In this part, we will build the discrete version
of the continuous  forms defined in \eqref{A-cont}, \eqref{B-cont} and 
\eqref{loadcont} using the operators introduced previously. 
First, we consider the following discrete local bilinear form, 
$A_{h}^{\E}: \VK \times \VK \to \R$ approximating the continuous form $A(\cdot, \cdot)$:
\begin{equation}\label{disc-bili:form}
A_{h}^{\E}(\varphi_h,\phi_h):=
A^{\E} \left(\PiK \varphi_h,\PiK \phi_h\right) +S_{\bD}^{\E}\big((\rI-\PiK) \varphi_h,(\rI-\PiK) \phi_h\big) \quad  \forall  \varphi_h,\phi_h\in\VK.
\end{equation}
where $S_{\bD}^{\E}(\cdot,\cdot)$ is any symmetric positive definite bilinear form to be 
chosen as to satisfy:
\begin{equation}\label{term-stab-SK}
c_{*} A^{\E}(\phi_h,\phi_h)\leq S_{\bD}^{\E}(\phi_h,\phi_h)\leq c^{*} A^{\E}(\phi_h,\phi_h)\qquad
\forall \phi_h \in \textrm{Ker}(\PiK),
\end{equation}
with $c_{*}$ and $c^{*}$ positive constants independent of $\E$. 
More precisely, we choose the following computable representation satisfying property \eqref{term-stab-SK} 
(see~\cite[Lemma 5.1]{CKP-arxiv2022}):
\begin{equation*}
S_{\bD}^{\E}(\varphi_h,\phi_h):= h^{-2}_{\E}\sum_{i=1}^{N^{\E}_{\dof}} \dof_i(\varphi_h) \dof_i(\phi_h) 
\qquad \forall \varphi_h,\phi_h \in \VK,	
\end{equation*}
 where $N^{\E}_{\dof}$ denote the number of 
degrees freedom of $ \VK$ and $\dof_i(\cdot)$ is the operator that to each 
smooth enough function $\phi$ associates the $i$th local degree of freedom 
$\dof_i(\phi)$, with $1 \leq i \leq N^{\E}_{\dof}$.  

To approximate the local trilinear form $B^{\E}(\cdot; \cdot, \cdot)$, we consider the following expression:   
\begin{equation}\label{disc-tril:form}
	B_{h}^{\E}(\zeta_h; \varphi_h,\phi_h)
	:=\left( \PimoK \Delta \zeta_h \:\PiunoKb \curl \varphi_h ,  \PiunoKb \nabla \phi_h\right)_{0,\E} \qquad  \forall \zeta_h, \varphi_h,\phi_h\in\VK.
\end{equation}

Finally, for the functional \eqref{loadcont} we consider the following  local approximation:
\begin{equation*}
	F_h^{\E}(\phi_h)
	:= (\PiunoKb \fb, \curl \phi_h)_{0,\E} \equiv  (\fb,  \PiunoKb  \curl\phi_h)_{0,\E}
	\qquad \forall\phi_h\in\VK.
\end{equation*}

Thus, for all  $ \zeta_h, \varphi_h,\phi_h\in\Vh$, we define the global multilineal forms, as follows:
\begin{align}
A_h : \Vh \times  \Vh &\to \R, \quad A_h(\varphi_h, \phi_h) :=\sum_{\E \in \CT_h} A_h^{\E}(\varphi_h, \phi_h),
\label{disc-globA}\\
B_h : \Vh \times  \Vh \times \Vh &\to \R, \quad B_h(\zeta_h;\varphi_h, \phi_h) :=\sum_{\E \in \CTh} B_h^{\E}(\zeta_h;\varphi_h, \phi_h),\label{disc-globB} \\
F_h : \Vh   &\to \R, \quad	F_h(\phi_h):= \sum_{\E\in\CTh}F_h^{\E}(\phi_h). \label{load-global} 
\end{align}

We recall that all the forms defined above are computable using the degrees freedom and 
the trilinear form $B_h(\cdot;\cdot,\cdot)$ is extendable to the whole $\Phi$.

Now, we establish the classical consistency and stability VEM properties 
(see~\cite{BBCMMR2013,ALKM2016,CMS2016,ZZMC19}).
\begin{lemma}
The local bilinear forms $A^{\E}(\cdot,\cdot)$ and $A_h^{\E}(\cdot,\cdot)$ satisfy  the following properties:
\begin{itemize}
\item \textit{consistency}: for all $h > 0$ and for all $\E\in\CTh$, we have that
\begin{align}
A_h^{\E}(\chi,\phi_h)&=A^{\E}(\chi,\phi_h)\qquad \quad \forall \chi\in\P_2(\E),
\qquad\forall \phi_h\in \VK,\label{consis-A}
\end{align}
		
\item \textit{stability and boundedness}: there exist positive
constants $\alpha_1$ and $\alpha_2$, independent of $h$ and $\E$, such that:
\begin{equation}
\alpha_1 A^{\E}(\phi_h,\phi_h)
\leq A_h^{\E}(\phi_h,\phi_h)
\leq\alpha_2 A^{\E}(\phi_h,\phi_h)
\qquad\forall \phi_h\in \VK.\label{stab-a}  
		\end{equation}
	\end{itemize}
\end{lemma}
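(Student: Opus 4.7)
The plan is to establish both properties by exploiting the two defining conditions of the projector $\PiK$, namely that it reproduces quadratic polynomials in the $A^{\E}(\cdot,\cdot)$-inner product and that the decomposition $\phi_h = \PiK\phi_h + (\rI-\PiK)\phi_h$ is $A^{\E}$-orthogonal. Together with property \eqref{term-stab-SK} of the stabilization bilinear form $S_{\bD}^{\E}(\cdot,\cdot)$, these two ingredients are sufficient; no further structural information about the virtual space $\VK$ is needed.

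For the consistency property \eqref{consis-A}, I would pick $\chi\in\P_2(\E)$ and exploit that $\PiK$ is a projection onto $\P_2(\E)$, hence $\PiK\chi=\chi$ and $(\rI-\PiK)\chi=0$. Substituting into the definition \eqref{disc-bili:form} of $A_h^{\E}$ annihilates the stabilization term and reduces the first term to $A^{\E}(\chi,\PiK\phi_h)$. Then, using the first defining relation of $\PiK$ (that $A^{\E}(\PiK\phi_h,\chi)=A^{\E}(\phi_h,\chi)$ for every $\chi\in\P_2(\E)$) together with the symmetry of $A^{\E}(\cdot,\cdot)$, this last quantity collapses to $A^{\E}(\chi,\phi_h)$, which is exactly the claim.

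For the stability and boundedness property \eqref{stab-a}, my strategy is to derive the Pythagorean identity
\begin{equation*}
A^{\E}(\phi_h,\phi_h)=A^{\E}(\PiK\phi_h,\PiK\phi_h)+A^{\E}((\rI-\PiK)\phi_h,(\rI-\PiK)\phi_h),
\end{equation*}
which follows from the fact that the defining relation of $\PiK$ yields $A^{\E}((\rI-\PiK)\phi_h,\PiK\phi_h)=0$ (since $\PiK\phi_h\in\P_2(\E)$). Then, using that $(\rI-\PiK)\phi_h\in\textrm{Ker}(\PiK)$, I would apply the spectral bounds \eqref{term-stab-SK} to sandwich the stabilization contribution between $c_*$ and $c^*$ times $A^{\E}((\rI-\PiK)\phi_h,(\rI-\PiK)\phi_h)$. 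Adding the unaltered consistent term $A^{\E}(\PiK\phi_h,\PiK\phi_h)$ and invoking the Pythagorean identity yields the double inequality with $\alpha_1:=\min\{1,c_*\}$ and $\alpha_2:=\max\{1,c^*\}$, both independent of $h$ and $\E$.

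Neither step involves significant obstacles: the proof is essentially an algebraic manipulation of the definition of $A_h^{\E}(\cdot,\cdot)$ together with the two characterizing properties of $\PiK$ and the abstract spectral bounds on $S_{\bD}^{\E}(\cdot,\cdot)$. The only minor care point is verifying that $(\rI-\PiK)\phi_h$ indeed lies in $\textrm{Ker}(\PiK)$ (which is immediate from $\PiK$ being idempotent on its range), so that the hypothesis of \eqref{term-stab-SK} applies; this is the standard VEM argument employed in the references \cite{BBCMMR2013,ALKM2016,CMS2016,ZZMC19} cited just before the lemma.
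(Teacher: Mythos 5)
Your proof is correct and is precisely the standard VEM argument that the paper relies on: it states this lemma without proof, citing \cite{BBCMMR2013,ALKM2016,CMS2016,ZZMC19}, where the identical reasoning (polynomial consistency via $\PiK\chi=\chi$, the $A^{\E}$-orthogonality/Pythagorean splitting, and the spectral bounds \eqref{term-stab-SK} giving $\alpha_1=\min\{1,c_*\}$, $\alpha_2=\max\{1,c^*\}$) is carried out. No gaps.
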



\section{Discrete formulation and its well-posedness}\label{Discrete-Problem}
\setcounter{equation}{0}
In this section we write the nonconforming discrete VE formulation  
and we provide its well-posedness  by using a fixed-point strategy.

The nonconforming VE problem reads as:
find $\psi_h\in\Vh$, such that
\begin{equation}\label{NSE:stream:disc}
\nu  A_h(\psi_h,\phi_h)+B_h(\psi_h;\psi_h,\phi_h)=F_{h}(\phi_h) 
\qquad \forall \phi_h\in\Vh,
\end{equation}
where the multilineal forms $A_{h}(\cdot,\cdot)$, $B_{h}(\cdot; \cdot,\cdot)$ and
$F_{h}(\cdot)$ are defined in \eqref{disc-globA}, \eqref{disc-globB} and
 \eqref{load-global}, respectively.

In order to  prove that problem~\eqref{NSE:stream:disc} is well-posed, 
in next section, we will introduce an~\textit{enriching operator} $\widetilde{E}_h$, 
from the sum space $\Phi+\Vh$ into the conforming counterpart of the space $\Vh$.
Moreover, we establish some approximation properties  for this operator, 
and by using such estimates we provide novel embedding results for the 
sum space $\Phi +\Vh$, which will be useful to establish 
the well-posedness of discrete problem and the error estimates. 

We remark that the operator $\widetilde{E}_h$ constructed here can be seen as 
an extension of the enriching operator defined in \cite{JY2021} and 
the quasi-interpolation operator constructed in~\cite{CH2022}.

\subsection{A new enriching operator}\label{Enrich:operator}
With the aim of introducing the aforementioned  operator 
and establish its approximation properties, we start by assuming the classical assumptions
on the polygonal decomposition.  
There exists a uniform number $\rho>0$ independent of $\CT_h$,  such that for 
every $\E\in \CTh$ it holds \cite{BBCMMR2013}:
\begin{itemize}
	\item[${\bf A_1}:$]  $\E$ is star-shaped with
	respect to every point of a  ball
	of radius  $ \geq  \rho h_\E$;
	\item[${\bf A2}:$] the length $h_e$ of every  edge $e \in \partial \E$,
	satisfies $h_e\geq \rho h_\E$.
\end{itemize}

From reference \cite{CH2017} we have that if
the mesh $\CT_h$ fulfilling the assumptions ${\bf A_1}$ and ${\bf A_2}$, then the mesh also 
satisfy  the following property:

\begin{itemize}
\item[${\bf P_1}:$] For each  $\E\in \CTh$, there exists a virtual triangulation $\CT^{\E}_h$ of $\E$ such that  $\CT^{\E}_h$ is uniformly shape regular and quasi-uniform. 
The corresponding mesh size $h_T$ of $\CT^{\E}_h$ is proportional to $h_{\E}$. Every edge of $\E$ 
is a side of a certain triangle in $\CT^{\E}_h$. 
\end{itemize}

\begin{remark}\label{remark:A1}
From property ${\bf P_1}$, we have that the number of triangles of 
each virtual triangulation $\CT^{\E}_h$ is uniformly bounded by a 
number $L$ and the size of each triangle is comparable to that
of the polygon (for further details, see~\cite{CH2017}).
\end{remark}

Now, for the sake of completeness, we will recall the construction of the 
$H^2$-conforming virtual space~\cite{ABSV2016}.
\paragraph{Conforming virtual local and global space.}
For every polygon $\E\in\CT_h$, we introduce
the following preliminary finite dimensional space \cite{ABSV2016}:
\begin{align*}
\widetilde{\calW}_h^{\c}(\E)
:=\left\{\phi_h\in \HdoK : \Delta^2\phi_h\in\P_{2}(\E), \phi_h|_{\partial\E}\in C^0(\partial\E),
\phi_h|_e\in\P_{3}(e)\,\,\forall e\subseteq\partial\E,\right.\\
\left.\nabla \phi_h|_{\partial\E}\in C^0(\partial\E)^2,
\partial_{\nb_e} \phi_h|_e\in\P_{1}(e)\,\,\forall e\subseteq\partial\E\right\},
\end{align*}

Next, for a given $\phi_{h}\in	\widetilde{\calW}_h^{\c}(\E)$, we introduce two sets
${\bf \mathscr{D}^{\bv}_1}$ and ${\bf \mathscr{D}^{\nabla}_2}$ of linear operators from
the local virtual space $\widetilde{\calW}_h^{\c}(\E)$ into $\R$:

\begin{itemize}
	\item  $D^{\c}_{\bv}$:  the values of $\phi_h(\bv)$
	 for all vertex $\bv$ of the polygon $\E$;
	\item  $D^{\c}_{\nabla}$: the values
	of $h_{\bv_i}\nabla\phi_h(\bv)$ for all vertex $\bv$ of the polygon $\E$,
\end{itemize}
where $h_{\bv}$  is a characteristic length attached to each vertex $\bv$,
for instance  to the average  of the diameters of the elements with $\bv$ as a vertex.
  
Now, we consider the operator 
$\Pi_{\E}^{\bD,\c}: \widetilde{\calW}_h^{\c}(\E)\longrightarrow\P_2(\E)\subseteq 
\widetilde{\calW}_h^{\c}(\E)$ associated to the conforming approach, which is computable
 using the sets 
${\bf D}^{\c}_{\bv}$ and ${\bf D}^{\c}_{\nabla}$ (for further details see \cite[Lemma 2.1]{ABSV2016}).

Next, for each $\E \in \CT_h$, we consider the conforming local  virtual space given by:
\begin{align*}
	\calW_h^{\c}(\E) 
	:= \left \{ \phi_h \in \widetilde{\calW}_h^{\c}(\E): 
	(\phi_h-\Pi_{\E}^{\bD,\c} \phi_h,\chi)_{0,\E}=0 \quad \forall \chi \in \P_{2}(\E) \right \}.
\end{align*}

 For every decomposition $\CT_h$ of $\O$ into polygons $\E$, we define the 
 conforming virtual spaces $\calW^{\c}_h$:
 \begin{equation*}
 	\calW^{\c}_h:=\left\{\phi_h\in \Phi:\ \phi_h|_{\E}\in\calW_h^{\c}(\E) 
 	\qquad \forall \E \in\CT_h\right\}.
 \end{equation*} 
 We recall that the global DOFs are defined by  ${\bf D}^{\c}_{\bv}$  and 
 ${\bf D}^{\c}_{\nabla}$ excluding the DOFs on the boundary $\Gamma$. 
  
\paragraph{Construction of the Enriching operator.}
We will extend the ideas of~\cite{JY2021,CH2022}. 
First, we will introduce some additional notations. 
Indeed, for each vertex  $\bv \in \VV_h$ and for all
 $e \in\EE_h$ we define the following sets 
(\textit{patches}):
\begin{equation*}
\begin{split}
\omega(\bv)	:= \bigcup \big\{ \E \in \CT_h: \bv \in \E \big\}\quad  \text{and} 
\quad\omega(e) := \bigcup \big\{ \E \in \CT_h: e \in \partial\E \big \}.
\end{split}
\end{equation*}
Moreover, for each $\E \in \CT_h$ we define
\begin{equation*}
\begin{split}
\omega(\E) := \bigcup \big\{ \widehat{\E} \in \CT_h: \E \cap \widehat{\E}  \neq \emptyset  \big\},
	\end{split}
\end{equation*}
and for a function $\phi_h \in H^2(\CT_h)$, we defined the following broken seminorm $|\phi_h|^2_{2,\omega(\E),h}	 
:= \Big(\sum_{\widehat{\E} \in \omega(\E) } |\phi_h|^2_{2,\widehat{\E}} \Big)^{1/2}$.

We will denote by $N(\bv)$ and  by $N(e)$ the number of elements in $\omega(\bv)$ 
and $\omega(e)$, respectively. In addition, for any $\varphi_h \in \Phi +\Vh$, 
we introduce the piecewise $L^2$-projection $\Pi^2$, as $\Pi^2  \varphi_h|_{\E} = \PioK(\varphi_h|_{\E}),$
where $\PioK$  is the usual $L^2$-projection onto $\P_2(\E)$ defined in \eqref{proymdos}. 

Let $N^{\c}_{\dof}:=\dim(\calW^{\c}_{h})$, then as in \cite{JY2021,CH2022} we can relabel the degrees 
of freedom using a single subindex $j=1, \ldots, N^{\c}_{\dof}$ and will denote the degrees of freedom by $\{{\bf D}^{\c}_{j}\}^{N^{\c}_{\dof}}_{j=1}$, which are associated with the shape basis
functions $\{\zeta_j\}^{N^{\c}_{\dof}}_{j=1}$ of the space $\calW^{\c}_{h}$. 
Employing this notation the enriching operator $\widetilde{E}_h : \Phi + \Vh \to \calW^{\c}_{h}$ 
is defined by:
\begin{equation*}
\widetilde{E}_h\varphi_h(x)= \sum_{j=1}^{N^{\c}_{\dof}} {\bf D}^{\c}_{j}(\widetilde{E}_h\varphi_h)\zeta_{j}(x),
\end{equation*} 
where the degrees of freedom for $\widetilde{E}_h \varphi_h$ are determined by:  
\begin{enumerate}
\item 	${\bf D}^{\c}_{1,\bv}(\widetilde{E}_h\varphi_h)= \widetilde{E}_h\varphi_h(\bv) := \varphi_h(\bv) \qquad \forall \bv \in \VVi$;
\item  ${\bf D}^{\c}_{2,\bv}(\widetilde{E}_h\varphi_h):= \frac{1}{N(\bv)} \sum_{\widehat{\E}\in  \omega(\bv )} h_{\bv} \nabla( \Pi^2\varphi_h|_{\widehat{\E}}(\bv))  \qquad \forall \bv \in \VVi$. 
\end{enumerate}

The following result establishes approximation properties of the enriching  operator $\widetilde{E}_h$. \begin{proposition}\label{proposition-Eh}
For all $\phi_h \in \Phi+\Vh$, there exists $C>0$ independent of $h$, such that
\begin{equation*}
\sum_{j=0}^{2}h^{2j}_{\E}|\phi_h-\widetilde{E}_h\phi_h|^2_{j,\E} \leq Ch_{\E}^4|\phi_h|^2_{2,\omega(\E),h} 
\qquad \forall \E \in \CT_h.
	\end{equation*}	
\end{proposition}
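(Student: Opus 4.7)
The plan is to follow the classical quasi-interpolation strategy: approximate $\phi_h$ locally by a degree-two polynomial $q_{\E}$ on the patch $\omega(\E)$, exploit a polynomial preservation property of $\widetilde{E}_h$ on interior elements, and control what remains through the degrees of freedom of $\widetilde{E}_h(\phi_h - q_{\E})$.

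First I would verify the polynomial preservation: for any $q \in \P_2$ (regarded as a global polynomial) and any $\E \in \CT_h$ whose vertex patches $\omega(\bv)$ do not touch $\Gamma$, one has $\widetilde{E}_h q = q$ on $\E$. Indeed, inspecting the two degrees of freedom in the definition of $\widetilde{E}_h$: the vertex value trivially returns $q(\bv)$, and since $\Pi^2 q = q$ the averaged gradient DOF reduces to $h_{\bv}\nabla q(\bv)$, which coincides with the corresponding DOF of $q$ viewed as a member of $\calW_h^{\c}$. Elements abutting $\Gamma$ are handled analogously using that both $\Phi$ and $\Vh$ enforce vanishing values at boundary vertices, matching the zero boundary DOFs built into $\calW_h^{\c}$.

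Second, by a broken Bramble--Hilbert argument on the patch $\omega(\E)$, I select $q_{\E} \in \P_2$ such that
\begin{equation*}
|\phi_h - q_{\E}|_{j,\omega(\E),h} \leq C\, h_{\E}^{2-j}\, |\phi_h|_{2,\omega(\E),h}, \qquad j=0,1,2.
\end{equation*}
The preservation property then yields the decomposition $\phi_h - \widetilde{E}_h\phi_h = (\phi_h - q_{\E}) - \widetilde{E}_h(\phi_h - q_{\E})$ on $\E$. The first summand is already dominated by the right-hand side of the claim after multiplication by $h_{\E}^{2j}$ and summation in $j$, so only the enriched piece $\widetilde{E}_h(\phi_h - q_{\E})$ remains to be controlled. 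Invoking the virtual sub-triangulation $\CT_h^{\E}$ provided by property ${\bf P_1}$, together with a scaled VEM-type norm equivalence on the finite-dimensional space $\calW_h^{\c}(\E)$, I would bound $|\widetilde{E}_h(\phi_h - q_{\E})|_{j,\E}$ by an appropriately $h_{\E}$-weighted $\ell^2$-sum of its vertex DOFs. Each pointwise DOF is in turn controlled by trace and Sobolev inequalities on the patches $\omega(\bv)$: the vertex value by a combination of $h_{\E}^{-1}\|\phi_h - q_{\E}\|_{0,\omega(\bv)}$, $|\phi_h - q_{\E}|_{1,\omega(\bv)}$ and $h_{\E}|\phi_h - q_{\E}|_{2,\omega(\bv),h}$, and the averaged gradient DOF by the analogous bound applied to $\nabla\Pi^2(\phi_h - q_{\E})$, combining an inverse inequality on the polynomial piece with the $L^2$-stability of $\Pi^2$ recorded in \eqref{bound:proj}. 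Substituting the Bramble--Hilbert bounds closes the argument after summation in $j$.

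The main obstacle is the gradient DOF ${\bf D}^{\c}_{2,\bv}$. Since gradients of functions in $\Vh$ generally jump across interior edges, a literal pointwise value of $\nabla\phi_h$ at a vertex is ill-defined, forcing one to work with the averaged gradient of the $L^2$-projection. This introduces $\Pi^2$ into the estimate and requires the combined use of inverse inequalities on polynomial pieces, $L^2$-stability of $\Pi^2$, and the quasi-uniformity of the virtual triangulation $\CT_h^{\E}$ given by ${\bf P_1}$, all interacting correctly. Handling boundary elements, where the conforming vertex DOFs attached to $\Gamma$ are forced to vanish, is a technically separate but parallel sub-case relying on the homogeneous boundary data built into $\Phi + \Vh$.
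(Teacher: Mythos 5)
Your strategy---polynomial preservation of $\widetilde{E}_h$ on $\P_2$, a single patchwise polynomial approximant $q_{\E}$, and control of $\widetilde{E}_h(\phi_h-q_{\E})$ through its degrees of freedom via norm equivalence on $\calW_h^{\c}(\E)$ and the virtual sub-triangulation of ${\bf P_1}$---is the standard quasi-interpolation argument, and it is in substance the proof of the results the paper merely cites. The paper's own proof is much shorter: it imports the $j=0$ and $j=2$ estimates directly from \cite{JY2021} (Lemma 4.2) and \cite{CH2022} (Lemma 4.1), and then obtains the $j=1$ case from the elementary inequality $|v|_{1,\E}\leq C\big(h_{\E}|v|_{2,\E}+h_{\E}^{-1}\|v\|_{0,\E}\big)$. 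Your direct argument delivers all three seminorms at once and would make the proposition self-contained, at the cost of essentially re-proving the cited lemmas; the paper's route buys brevity at the cost of outsourcing the substance.

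There is one step in your sketch that is not justified as written, and it is precisely where the real work lies: the ``broken Bramble--Hilbert'' bound $|\phi_h-q_{\E}|_{j,\omega(\E),h}\leq C h_{\E}^{2-j}|\phi_h|_{2,\omega(\E),h}$ is \emph{false} for a generic piecewise-$H^2$ function (take $\phi_h$ equal to different constants on neighbouring elements of the patch: the right-hand side vanishes while the left-hand side does not for $j=0,1$). It holds here only because $\phi_h\in\Phi+\Vh\subset\HncT$, i.e.\ because of continuity at interior vertices, vanishing values at boundary vertices, and the vanishing means of the normal-derivative jumps across edges; these weak-continuity constraints must enter the patch approximation explicitly. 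The same constraints are also what rescue the boundary-element case, where $\widetilde{E}_h q_{\E}\neq q_{\E}$ (a generic $q_{\E}\in\P_2$ does not satisfy the homogeneous boundary DOFs of $\calW^{\c}_h$), so the clean decomposition $\phi_h-\widetilde{E}_h\phi_h=(\phi_h-q_{\E})-\widetilde{E}_h(\phi_h-q_{\E})$ fails there and the discarded boundary DOFs of $q_{\E}$ must be bounded using $\phi_h(\bv)=0$ on $\Gamma$ and the zero edge means of $\partial_{\nb_e}\phi_h$. With those points supplied, your argument closes and reproduces the cited lemmas.
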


\begin{proof}
First, we note that using the same arguments used in~\cite[Lemma 4.2]{JY2021} 
 and \cite[Lemma 4.1]{CH2022}  (see also \cite{AMS2022-CI2MA}), for all 
 $\phi_h \in \Phi+\Vh$, we have that 
\begin{align}\label{Eh:norm1:local}
\|\phi_h-\widetilde{E}_h\phi_h\|_{0,\E}\leq Ch^2_{\E}|\phi_h|_{2,\omega(\E),h}  \quad \text{and} \quad 
|\phi_h-\widetilde{E}_h\phi_h|_{2,\E}\leq C|\phi_h|_{2,\omega(\E),h}. 		
\end{align}
Now, by using standard inequality (see \cite[equation (3.3)]{JY2021}) and \eqref{Eh:norm1:local}, 
there exists a constant $C>0$, independent to $h_{\E}$, such that
\begin{equation}\label{Eh:pre1}
\begin{split}
|\phi_h-\widetilde{E}_h\phi_h|_{1,\E} &\leq  C(h_{\E}|\phi_h-\widetilde{E}_h\phi_h|_{2,\E} +
  h^{-1}_{\E}\|\phi_h-\widetilde{E}_h\phi_h\|_{0,\E})\\
& \leq C (h_{\E}|\phi_h|_{2,\omega(\E),h} + h^2_{\E} h^{-1}_{\E}|\phi_h|_{2,\omega(\E),h})\\
& \leq Ch_{\E}|\phi_h|_{2,\omega(\E),h}.
		\end{split}
	\end{equation}	
The desired result follows from \eqref{Eh:norm1:local} and \eqref{Eh:pre1}. 
\end{proof}

\subsection{Discrete Sobolev embeddings and properties of the discrete forms}
In this subsection we establish two important estimates, which are useful to prove the continuity 
of the discrete multilineal forms.   We start presenting  the main result of this section, 
which establishes discrete Sobolev
embeddings for the space $\Phi +\Vh$.
\begin{theorem}\label{solobev:inclusion}
For any $2 \leq q <  \infty$ there exists a positive constant $C$, independent of  $h$, such that
\begin{equation*}
|\phi_h|_{1,q,h} \leq \CS|\phi_h|_{2,h} \quad  \quad \forall\phi_h \in \Phi +\Vh.
\end{equation*}
\end{theorem}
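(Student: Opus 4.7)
The plan is to exploit the enriching operator $\widetilde{E}_h$ to split $\phi_h$ into a conforming piece in $\calW_h^{\c}$, on which the continuous 2D Sobolev embedding applies, plus a residual which is controlled element-by-element using an inverse inequality and the approximation property in Proposition~\ref{proposition-Eh}. A key feature of two dimensions is that $H^2(\O)\hookrightarrow W^{1,q}(\O)$ for every $q<\infty$, and this is what makes the right-hand side independent of $q$ (absorbed into the constant $\CS$).

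More precisely, I would start from the triangle inequality
\begin{equation*}
|\phi_h|_{1,q,h}\;\leq\;|\widetilde{E}_h\phi_h|_{1,q,\O}\;+\;|\phi_h-\widetilde{E}_h\phi_h|_{1,q,h}.
\end{equation*}
For the first term, since $\widetilde{E}_h\phi_h\in\calW_h^{\c}\subset\Phi\subset H_0^2(\O)$, the continuous Sobolev embedding in two dimensions and Poincaré give
$|\widetilde{E}_h\phi_h|_{1,q,\O}\leq C\,\|\widetilde{E}_h\phi_h\|_{2,\O}\leq C\,|\widetilde{E}_h\phi_h|_{2,\O}$.
A broken triangle inequality together with Proposition~\ref{proposition-Eh} applied with $j=2$, and summing using the bounded overlap of the patches $\{\omega(\E)\}_{\E\in\CTh}$ (a consequence of $\mathbf{A_1}$--$\mathbf{A_2}$ and Remark~\ref{remark:A1}), yields $|\widetilde{E}_h\phi_h|_{2,\O}\leq C\,|\phi_h|_{2,h}$.

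For the residual, I would invoke an elementwise inverse inequality for VE functions (justified via the virtual subtriangulation $\CTh^\E$ from property $\mathbf{P_1}$, on whose triangles $\phi_h$ and $\widetilde{E}_h\phi_h$ are smooth): in two dimensions this reads $\|\nabla w\|_{L^q(\E)}\leq C\,h_\E^{2/q-1}\|\nabla w\|_{0,\E}$ for $w=\phi_h-\widetilde{E}_h\phi_h$ on each $\E$. Combined with the case $j=1$ of Proposition~\ref{proposition-Eh}, this produces
\begin{equation*}
|\phi_h-\widetilde{E}_h\phi_h|_{1,q,\E}\;\leq\;C\,h_\E^{2/q}\,|\phi_h|_{2,\omega(\E),h}.
\end{equation*}
Summing the $q$-th powers, using the embedding $\ell^2\hookrightarrow\ell^q$ for sequences indexed by the elements (since $q\geq 2$), the bound $h_\E\leq h$, and again bounded overlap, I would obtain $|\phi_h-\widetilde{E}_h\phi_h|_{1,q,h}\leq C\,h^{2/q}|\phi_h|_{2,h}\leq C|\phi_h|_{2,h}$. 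Adding the two contributions gives the claimed discrete embedding.

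The main obstacle is the inverse estimate on the residual, because neither $\phi_h$ nor $\widetilde{E}_h\phi_h$ is polynomial on $\E$; this step requires either the refined-triangulation argument based on property $\mathbf{P_1}$ or a dof-based norm equivalence on the VE spaces, and it is what forces the constant $\CS$ to depend on $q$ through the factor $h_\E^{2/q-1}$ (only harmlessly, since $h^{2/q}\leq 1$ for $h\leq 1$). A minor care point, but routine, is the bookkeeping in the discrete Hölder step that converts $\sum_\E |\phi_h|_{2,\omega(\E),h}^q$ into $|\phi_h|_{2,h}^q$ using the bounded overlap of patches and the comparability of neighboring mesh sizes guaranteed by $\mathbf{A_2}$.
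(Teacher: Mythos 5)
Your overall architecture coincides with the paper's: split $\phi_h$ via the enriching operator, handle $\widetilde{E}_h\phi_h$ with the continuous embedding $H^2(\O)\hookrightarrow W^{1,q}(\O)$ plus the stability of $\widetilde{E}_h$, estimate the residual locally over the virtual sub-triangulation from property ${\bf P_1}$, and sum using $q\ge 2$ and the boundedness of $h$. The conforming part and the final summation/bookkeeping are exactly as in the paper and are fine.

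The genuine gap is the inverse inequality you invoke for the residual, $\|\nabla w\|_{L^q(\E)}\le C h_{\E}^{2/q-1}\|\nabla w\|_{0,\E}$ with $w=\phi_h-\widetilde{E}_h\phi_h$. No such estimate can hold uniformly on the space in which $w$ lives: the theorem is stated for all $\phi_h\in\Phi+\Vh$, so $\phi_h$ may be an arbitrary element of $H_0^2(\O)$ and $w$ ranges over an infinite-dimensional set on each $\E$; an $L^q$-to-$L^2$ inverse estimate for gradients fails there (concentrate $\nabla w$ near a point), and the sub-triangulation does not rescue it because neither $\phi_h$ nor $\widetilde{E}_h\phi_h$ is piecewise polynomial on $\CT_h^{\E}$. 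What the paper uses instead is not an inverse inequality but a \emph{direct} scaled Sobolev embedding: setting $\varphi=\nabla w$ and pulling back to the reference triangle, $H^1(\widehat{T})\hookrightarrow L^q(\widehat{T})$ gives
\begin{equation*}
\|\varphi\|_{L^q(T)}\;\le\; C\,|T|^{(2-q)/(2q)}\bigl(\|\varphi\|_{0,T}^2+h_T^2|\varphi|_{1,T}^2\bigr)^{1/2},
\end{equation*}
which is valid for arbitrary $H^1$ functions at the price of the extra term $h_T|\varphi|_{1,T}=h_T|w|_{2,T}$; that term is then controlled by the $j=2$ case of Proposition~\ref{proposition-Eh}. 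This substitution yields exactly the local bound $|w|_{1,q,T}\le C h_{\E}^{2/q}|\phi_h|_{2,\omega(\E),h}$ that you wrote down, after which your summation argument goes through unchanged. So the endpoint of your residual estimate is correct, but the step producing it must be replaced by the scaled embedding (using both the $H^1$ and $H^2$ approximation properties of $\widetilde{E}_h$), not an inverse estimate.
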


\begin{proof}	
Let $2 \leq q <  \infty$, $\phi_h \in \Phi+ \Vh$ and $\widetilde{E}_h : \Phi+\Vh \to \calW^{\c}_{h}$ be the enriching operator  
defined in the above subsection. Then, by using 
the triangle inequality, the  embedding  of $H^2(\O)$ into $W^{1,q}(\O)$ and stability
 property in Proposition~\ref{proposition-Eh}, we have  that
\begin{equation}\label{ineq:1}
\begin{split}
|\phi_h|_{1,q,h} &\leq |\phi_h- \widetilde{E}_h \phi_h|_{1,q,h} + |\widetilde{E}_h \phi_h|_{1,q,\O} \\
&\leq |\phi_h- \widetilde{E}_h \phi_h|_{1,q,h} + C|\widetilde{E}_h \phi_h|_{2,\O} \\
&\leq |\phi_h- \widetilde{E}_h \phi_h|_{1,q,h} + C|\phi_h|_{2,h}. 
\end{split}
\end{equation}
In what follows we will estimate the term $|\phi_h- \widetilde{E}_h \phi_h|_{1,q,h}$ in the right-hand 
side of \eqref{ineq:1}. To do that,  for each $\E \in \CT_h$,  we consider the 
sub-triangulation $\CT^{\E}_h$ of property ${\bf P_1}$. 
Next, let  $\varphi := \nabla(\phi_h- \widetilde{E}_h \phi_h)|_{\E}$ and  $\widehat{\varphi}$ 
be the image of $\varphi $ under the affine transformation from $T$ to the
reference triangle $\widehat{T}$. Then, by using scaling arguments and the embedding of
$H^1(\widehat{T})$ into $L^q(\widehat{T})$, there is $C>0$  independent of $\E$, such that 
\begin{equation*}
\begin{split}
|\phi_h- \widetilde{E}_h \phi_h|_{1,q,T} &= \|\varphi\|_{L^q(T)}  
\leq  C  |T|^{1/q}\|\widehat{\varphi}\|_{L^q(\widehat{T})} 
\leq C|T|^{1/q}  \|\widehat{\varphi}\|_{1,\widehat{T}}\\
& \leq C|T|^{(2-q)/2q}(\|\varphi\|^2_{0,T} + h^2_T|\varphi|^2_{1,T})^{1/2}\\
& \leq C(h^2_{T})^{(2-q)/2q}  (|\phi_h- \widetilde{E}_h \phi_h|^2_{1,T} + h^2_T|\phi_h- \widetilde{E}_h \phi_h|^2_{2,T})^{1/2}\\
& \leq C h_{\E}^{(2-q)/q} (|\phi_h- \widetilde{E}_h \phi_h|^2_{1,\E} + h^2_{\E}|\phi_h- \widetilde{E}_h \phi_h|^2_{2,\E})^{1/2},
	\end{split}
\end{equation*}
where we have used the relation $|T| \approx h^2_{T} $ and that the size of each triangle in $\CT^{\E}_h$ 
is comparable with the polygon mesh size $h_{\E}$ (see Remark~\ref{remark:A1}). 

Now, from the above estimate and  Proposition~\ref{proposition-Eh} it holds
\begin{equation}\label{ineq:2}
 \begin{split}
|\phi_h- \widetilde{E}_h \phi_h|_{1,q,T} 
& \leq C h_{\E}^{(2-q)/q}h_{\E}|\phi_h|_{2,\omega(\E),h} 
 \leq  Ch^{2/q}_{\E}|\phi_h|_{2,\omega(\E),h}.
  	\end{split}
  \end{equation}

From bound \eqref{ineq:2} and since the number of triangles 
of each virtual triangulation $\CT^{\E}_h$ is uniformly bounded by a number $L$ (see again Remark~\ref{remark:A1}), 
we obtain
\begin{equation*}
\begin{split}
|\phi_h- \widetilde{E}_h \phi_h|^q_{1,q,\E} =  \sum_{T \in \E} |\phi_h- \widetilde{E}_h \phi_h|^q_{1,q,T} 
\leq C\sum_{T \in \E} h^{2}_{\E}|\phi_h|^q_{2,\omega(\E),h} \leq C L h^{2}_{\E}|\phi_h|^q_{2,\omega(\E),h}.
	\end{split}
\end{equation*}

Summing over each $\E \in \CT_h$, using the fact that $q \geq 2$ and a $\ell^{q}$-norms inequality,
 along with  $0< h \leq C < 1$, we obtain  
\begin{equation}\label{ineq:iv}
\begin{split}
|\phi_h- \widetilde{E}_h \phi_h|_{1,q,h} &= \Big(  \sum_{\E \in \CT_h}|\phi_h- \widetilde{E}_h \phi_h|^q_{1,q,\E} \Big)^{1/q} 
\leq C h^{2/q} \Big(\sum_{\E \in \CT_h}|\phi_h|^q_{2,\omega(\E),h} \Big)^{1/q}\\
&\leq C h^{2/q} \Big(\sum_{\E \in \CT_h}|\phi_h|^2_{2,\omega(\E),h} \Big)^{1/2}
\leq  C h^{2/q}|\phi_h|_{2,h}  \leq C|\phi_h|_{2,h},
\end{split}
\end{equation}
where the constant $C>0$ is independent of $h$.

Finally, combining  the estimates \eqref{ineq:1} and \eqref{ineq:iv} we conclude the proof.
\end{proof}

The next result  has been established in \cite[Lemma 5.1]{ZZCM2018}
 and allows show that the application  $| \cdot|_{2,h}$ is a norm in $\Vh$.
\begin{lemma}
	For all $\phi_h \in \Vh$, is holds: 
	\begin{equation*}
		\|\phi_h\|_{0,\O}+ |\phi_h|_{1,h} \leq  C|\phi_h|_{2,h},
	\end{equation*}
	where $C>0$ is a constant independent of $h$.
\end{lemma}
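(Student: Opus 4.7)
The plan is to leverage the enriching operator $\widetilde{E}_h : \Phi + \Vh \to \calW^{\c}_h$ already constructed, together with its approximation properties from Proposition~\ref{proposition-Eh}, to reduce the claimed inequality to the standard continuous Poincaré-type inequality on $\Phi = H_0^2(\O)$. The guiding decomposition is
\begin{equation*}
\phi_h = (\phi_h - \widetilde{E}_h \phi_h) + \widetilde{E}_h \phi_h,
\end{equation*}
where the first term is controlled element-wise by the approximation estimates of $\widetilde{E}_h$, while the second term lies in $\calW^{\c}_h \subset \Phi$ and hence obeys the usual continuous embeddings.

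First, I would apply Proposition~\ref{proposition-Eh} on each polygon $\E \in \CT_h$ to obtain
\begin{equation*}
\|\phi_h - \widetilde{E}_h \phi_h\|_{0,\E}^2 + h_\E^2 |\phi_h - \widetilde{E}_h \phi_h|_{1,\E}^2 \leq C h_\E^4 |\phi_h|_{2,\omega(\E),h}^2.
\end{equation*}
Summing over $\E \in \CT_h$, using the finite overlap of the patches $\{\omega(\E)\}$, and bounding $h_\E \leq \mathrm{diam}(\O)$ yields
\begin{equation*}
\|\phi_h - \widetilde{E}_h \phi_h\|_{0,\O} + |\phi_h - \widetilde{E}_h \phi_h|_{1,h} \leq C |\phi_h|_{2,h}.
\end{equation*}

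Next, since $\widetilde{E}_h \phi_h \in \calW^{\c}_h \subset \Phi = H_0^2(\O)$, the classical Poincaré and equivalence-of-norms results give
\begin{equation*}
\|\widetilde{E}_h \phi_h\|_{0,\O} + |\widetilde{E}_h \phi_h|_{1,\O} \leq C \|\widetilde{E}_h \phi_h\|_{2,\O} \leq C |\widetilde{E}_h \phi_h|_{2,\O}.
\end{equation*}
The stability bound $|\widetilde{E}_h \phi_h|_{2,\O} \leq C |\phi_h|_{2,h}$ follows from the triangle inequality combined with the $H^2$-estimate already contained in Proposition~\ref{proposition-Eh}:
\begin{equation*}
|\widetilde{E}_h \phi_h|_{2,\O} \leq |\widetilde{E}_h \phi_h - \phi_h|_{2,h} + |\phi_h|_{2,h} \leq C |\phi_h|_{2,h}.
\end{equation*}

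Finally, I would assemble the two pieces by the triangle inequality to conclude
\begin{equation*}
\|\phi_h\|_{0,\O} + |\phi_h|_{1,h} \leq C |\phi_h|_{2,h}.
\end{equation*}
I do not expect any serious obstacle here: every ingredient is already available from the construction and approximation properties of $\widetilde{E}_h$, and the only mild care needed is controlling the patch overlaps when summing over polygons, which is standard under assumptions ${\bf A_1}$--${\bf A_2}$ and Remark~\ref{remark:A1}.
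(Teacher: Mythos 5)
Your argument is correct, but it is not the paper's route: the paper gives no proof of this lemma at all and simply cites \cite[Lemma 5.1]{ZZCM2018}, where the estimate is obtained directly for the Morley-type nonconforming space (essentially a broken Poincar\'e--Friedrichs argument exploiting the continuity at interior vertices, the vanishing of $\phi_h$ at boundary vertices, and the zero mean of the normal-derivative jumps). What you do instead is recycle the enriching operator $\widetilde{E}_h$ and Proposition~\ref{proposition-Eh}: split $\phi_h=(\phi_h-\widetilde{E}_h\phi_h)+\widetilde{E}_h\phi_h$, control the first piece by the local approximation estimates (summed with finite patch overlap, which is legitimate under ${\bf A_1}$--${\bf A_2}$), and reduce the second piece to the continuous Poincar\'e inequality on $\Phi=H_0^2(\O)$ plus the $H^2$-stability of $\widetilde{E}_h$. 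All the ingredients you invoke are established before this lemma in the paper's ordering, so there is no circularity, and the conforming image $\widetilde{E}_h\phi_h\in\calW^{\c}_h\subset\Phi$ indeed carries the homogeneous boundary conditions needed for the continuous estimate. The trade-off is that your proof is self-contained within the paper's framework but leans on the comparatively heavy quasi-interpolation machinery (and hence on the mesh assumptions behind it), whereas the cited proof works directly with the weak-continuity constraints defining $H^{2,\nc}(\CTh)$ and is the more elementary statement about the nonconforming space itself. This is exactly the same enriching-operator pattern the paper uses one result earlier to prove the discrete Sobolev embedding of Theorem~\ref{solobev:inclusion}, so your proof fits naturally even though it is not the one the authors had in mind.
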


The following lemma summarize other properties of the discrete forms defined in 
\eqref{disc-globA}-\eqref{load-global}, which  will be used to establish the
well-posedness of  the discrete problem. 

\begin{lemma}\label{lemma:forms}
There exist positive constants $\CAh,\wtalpha,\CBh,\CFh$, independent of $h$, 
such that for all $\zeta_h, \varphi_h, \phi_h \in \Vh$  the forms defined in \eqref{disc-globA}-\eqref{load-global} satisfies the following properties: 
\begin{align}
|A_h(\varphi_h,\phi_h)|  &\leq  \CAh|\varphi_h|_{2,h} |\phi_h|_{2,h}\qquad \text{and} \qquad	A^h(\phi_h,\phi_h)\ge \wtalpha|\phi_h|_{2,h}^2, \label{Ah-ellipt}\\
B_h(\zeta_h;\varphi_h,\phi_h)  &\leq \CBh |\zeta_h|_{2,h}|\varphi_h|_{2,h}|\phi_h|_{2,h},\label{cont-Bh}\\
B_{h}(\zeta_h; \phi_h, \phi_h)&=0,\qquad \text{and} \qquad	
B_{h}(\zeta_h; \varphi_h, \phi_h)=-B_{h}(\zeta_h; \phi_h, \varphi_h), \label{Bh-skewsym}\\
|F_{h}(\phi_h)| &\leq  \CFh\|\fb\|_{0,\O}|\phi_h|_{2,h}.\label{cont-Fh} 
\end{align}
\end{lemma}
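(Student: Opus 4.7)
All four bounds follow from largely standard virtual-element manipulations; the only non-routine ingredients are the discrete Sobolev embedding of Theorem~\ref{solobev:inclusion} (which powers \eqref{cont-Bh}) and a pointwise rotation identity (which powers \eqref{Bh-skewsym}).

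For \eqref{Ah-ellipt} I would work element by element. The splitting \eqref{disc-bili:form} has a polynomial part governed by $A^{\E}$ and a stabilization part governed by $S^{\E}_{\bD}$. Cauchy--Schwarz applied to $A^{\E}$, the $A^{\E}$-orthogonality of $\PiK$ (which implies $|\PiK\phi_h|_{2,\E}\le|\phi_h|_{2,\E}$ and $|(\rI-\PiK)\phi_h|_{2,\E}\le|\phi_h|_{2,\E}$), and the two-sided bound \eqref{stab-a} combine to yield $|A_h^{\E}(\varphi_h,\phi_h)|\le(1+c^{*})|\varphi_h|_{2,\E}|\phi_h|_{2,\E}$ and $A_h^{\E}(\phi_h,\phi_h)\ge \min(1,c_{*})|\phi_h|_{2,\E}^{2}$. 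Summing over $\CTh$ and applying a discrete Cauchy--Schwarz gives \eqref{Ah-ellipt} with $\CAh=1+c^{*}$ and $\wtalpha=\min(1,c_{*})$.

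For \eqref{cont-Bh} I would apply H\"older's inequality with exponents $(2,4,4)$ to the local contribution \eqref{disc-tril:form}, obtaining
\begin{equation*}
|B_h^{\E}(\zeta_h;\varphi_h,\phi_h)|\le \|\PimoK\Delta\zeta_h\|_{0,\E}\,\|\PiunoKb\curl\varphi_h\|_{L^{4}(\E)}\,\|\PiunoKb\nabla\phi_h\|_{L^{4}(\E)}.
\end{equation*}
The $L^{2}$- and $L^{4}$-stability of the polynomial projections in \eqref{bound:proj}, together with the elementary estimate $\|\Delta\zeta_h\|_{0,\E}\le\sqrt{2}\,|\zeta_h|_{2,\E}$, remove the projections at the cost of universal constants, so that each local bound reduces to $C\,|\zeta_h|_{2,\E}\,|\varphi_h|_{1,4,\E}\,|\phi_h|_{1,4,\E}$. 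A discrete H\"older with the same exponents sums the estimate over $\CTh$, and Theorem~\ref{solobev:inclusion} with $q=4$ converts the broken $W^{1,4}$-seminorms into broken $H^{2}$-seminorms, yielding \eqref{cont-Bh}.

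The skew-symmetry \eqref{Bh-skewsym} is the only slightly subtle item and I would handle it via a rotation trick: the pointwise identity $\curl\phi=R\nabla\phi$, with $R$ the planar $\pi/2$-rotation $R(w_{1},w_{2})^{\top}=(w_{2},-w_{1})^{\top}$, combined with the fact that $\PiunoKb$ acts componentwise as an $L^{2}$-projection onto $\P_{1}(\E)^{2}$ and therefore commutes with $R$, gives $\PiunoKb\curl\phi_h=R(\PiunoKb\nabla\phi_h)$. Since $R\vb\cdot\vb\equiv 0$ pointwise and $\PimoK\Delta\zeta_h$ is constant on each $\E$, this at once yields $B_h^{\E}(\zeta_h;\phi_h,\phi_h)=0$, and the skew-symmetry in the last two arguments follows by polarization: expanding $B_h(\zeta_h;\varphi_h+\phi_h,\varphi_h+\phi_h)=0$ by bilinearity in the last two slots cancels the diagonal contributions and leaves $B_h(\zeta_h;\varphi_h,\phi_h)=-B_h(\zeta_h;\phi_h,\varphi_h)$. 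Finally, for \eqref{cont-Fh}, Cauchy--Schwarz on each $F_h^{\E}=(\PiunoKb\fb,\curl\phi_h)_{0,\E}$ together with the $L^{2}$-stability in \eqref{bound:proj} produces $|F_h^{\E}(\phi_h)|\le \|\fb\|_{0,\E}\,|\phi_h|_{1,\E}$; summing via a discrete Cauchy--Schwarz and invoking the preceding lemma $|\phi_h|_{1,h}\le C|\phi_h|_{2,h}$ on $\Vh$ closes the argument.
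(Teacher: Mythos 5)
Your proposal is correct and follows essentially the same route as the paper: the paper's own proof is terse (citing the definitions and stability \eqref{stab-a}, and spelling out only the H\"older--projection-stability--discrete-Sobolev chain for \eqref{cont-Bh}, which you reproduce exactly), and your additional details -- the Pythagorean/orthogonality argument for \eqref{Ah-ellipt}, the rotation identity $\PiunoKb\curl\phi_h=R(\PiunoKb\nabla\phi_h)$ with polarization for \eqref{Bh-skewsym}, and the $|\phi_h|_{1,h}\leq C|\phi_h|_{2,h}$ step for \eqref{cont-Fh} -- are precisely the standard arguments the paper leaves implicit. No gaps.
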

\begin{proof}
Properties in \eqref{Ah-ellipt} are obtained  from the definition of bilinear form $A_h(\cdot,\cdot)$  and the stability~\eqref{stab-a}.
To prove property \eqref{cont-Bh}, we use the definition of trilinear form $B_h(\cdot;\cdot,\cdot)$ and H\"{o}lder inequality to obtain that
\begin{equation*}
\begin{split}
B_h(\zeta_h;\varphi_h,\phi_h) & \leq \CBD^2\Big(\sum_{\E \in \CT_h} \|\Delta \zeta_h\|^2_{0,\E} \Big)^{1/2} \Big(\sum_{\E \in \CT_h}\|\curl \varphi_h\|^4_{L^{4}(\E)} \Big)^{1/4} \Big(\sum_{\E \in \CT_h}\| \nabla \phi_h\|^4_{L^{4}(\E)} \Big)^{1/4} \\
& \leq  \CBD^2|\zeta_h|_{2,h} |\varphi_h|_{1,4,h} |\phi_h|_{1,4,h}\\
& \leq  \CBh |\zeta_h|_{2,h} |\varphi_h|_{2,h} |\phi_h|_{2,h},
	\end{split}
\end{equation*} 
where $\CBh:= (\CBD\CS)^2 >0$, and  $\CBD$, $\CS$ are the constants in
\eqref{bound:proj} and Theorem~\ref{solobev:inclusion}, respectively.

Finally, the proof of properties~\eqref{Bh-skewsym} and \eqref{cont-Fh} are obtained from 
the definition of forms $B_h(\cdot;\cdot,\cdot)$ and $F_h(\cdot)$.

\end{proof}

\subsection{A fixed-point strategy}\label{fixed:point:strategy}
In this subsection we will develop a fixed-point strategy to establish the well-posedness of discrete problem
\eqref{NSE:stream:disc}.  Indeed, for a given 
$\xi_h \in \Vh$, we define the operator 
\begin{equation*}
	\begin{split}
		T^h : \Vh &\longrightarrow \Vh \\
		\xi_h & \longmapsto T^h(\xi_h) = \varphi_h,
	\end{split}
\end{equation*}
where $\varphi_h$ is the solution of the following linear problem: 
find $\varphi_h \in \Vh$, 
such that 
\begin{equation*}
\nu A_h(\varphi_h,\phi_h)+B_h(\xi_h;\varphi_h,\phi_h)=F_{h}(\phi_h) 
\qquad \forall \phi_h\in\Vh.
\end{equation*}

Next, we consider the ball
$Y_h:= \left\{ \phi_h \in \Vh: \|\phi_h\|_{2,\O} \leq \CFh(\wtalpha \nu)^{-1}\|\fb\|_{0,\O}\right\}$. 
Then, we  have the following result for the operator $T^h$.
\begin{lemma}\label{Th:welldefined}
 The operator $T^h$ is well defined. Moreover, if  
\begin{equation}\label{lambdah}
	\lambda_h := \CBh \CFh(\alpha_1\nu)^{-2} \|\fb\|_{0,\O} <1.
\end{equation}	 
 Then, $T^{h}: Y_h  \to Y_h $  is a contraction mapping. 
\end{lemma}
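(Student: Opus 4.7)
The plan is to prove the two assertions in three steps: establish well-posedness of the underlying linear problem by Lax--Milgram for each fixed $\xi_h$; then show that the a priori bound obtained from the Lax--Milgram solution lands the image in the same ball $Y_h$; and finally quantify the Lipschitz constant of $T^h$ by comparing two solutions and recognising the factor $\lambda_h$. All ingredients needed are already packaged in Lemma~\ref{lemma:forms}; the driver of all three steps is the skew-symmetry \eqref{Bh-skewsym}, which decouples the $\xi_h$-dependence from coercivity.

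For well-definedness, I fix $\xi_h \in \Vh$ and consider the bilinear form on $\Vh \times \Vh$ given by $(\varphi_h,\phi_h)\mapsto \nu A_h(\varphi_h,\phi_h)+B_h(\xi_h;\varphi_h,\phi_h)$. Its continuity is immediate from \eqref{Ah-ellipt} and \eqref{cont-Bh} (with a $\xi_h$-dependent but finite constant), while \eqref{Bh-skewsym} kills the trilinear diagonal term and, together with the coercivity in \eqref{Ah-ellipt}, yields
\begin{equation*}
\nu A_h(\phi_h,\phi_h)+B_h(\xi_h;\phi_h,\phi_h)=\nu A_h(\phi_h,\phi_h)\ge \nu\wtalpha\,|\phi_h|_{2,h}^{2}.
\end{equation*}
Since $F_h$ is a continuous linear functional on $\Vh$ by \eqref{cont-Fh}, Lax--Milgram yields a unique $\varphi_h=T^h(\xi_h)\in\Vh$, so $T^h$ is well defined on the whole of $\Vh$.

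Next, to verify $T^h(Y_h)\subset Y_h$, I test the defining equation with $\phi_h=\varphi_h$. The skew-symmetry again makes $B_h(\xi_h;\varphi_h,\varphi_h)$ vanish, so coercivity and \eqref{cont-Fh} give
\begin{equation*}
\nu\wtalpha\,|\varphi_h|_{2,h}^{2}\le \nu A_h(\varphi_h,\varphi_h)=F_h(\varphi_h)\le \CFh\|\fb\|_{0,\O}\,|\varphi_h|_{2,h},
\end{equation*}
hence $|\varphi_h|_{2,h}\le \CFh(\wtalpha\nu)^{-1}\|\fb\|_{0,\O}$, placing $\varphi_h$ in $Y_h$ (this holds for every $\xi_h\in\Vh$, in particular for $\xi_h\in Y_h$). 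For the contraction, let $\xi_h^{1},\xi_h^{2}\in Y_h$ and $\varphi_h^{i}:=T^h(\xi_h^{i})$. Subtracting the two equations and inserting $\pm B_h(\xi_h^{2};\varphi_h^{1},\phi_h)$ gives
\begin{equation*}
\nu A_h(\varphi_h^{1}-\varphi_h^{2},\phi_h)+B_h(\xi_h^{2};\varphi_h^{1}-\varphi_h^{2},\phi_h)=-B_h(\xi_h^{1}-\xi_h^{2};\varphi_h^{1},\phi_h).
\end{equation*}
Testing with $\phi_h=\varphi_h^{1}-\varphi_h^{2}$, the second left-hand term is annihilated by \eqref{Bh-skewsym}. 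Coercivity on the left, continuity \eqref{cont-Bh} on the right, together with the bound $|\varphi_h^{1}|_{2,h}\le \CFh(\wtalpha\nu)^{-1}\|\fb\|_{0,\O}$ from the previous step, allow one to cancel a factor of $|\varphi_h^{1}-\varphi_h^{2}|_{2,h}$ and obtain
\begin{equation*}
|\varphi_h^{1}-\varphi_h^{2}|_{2,h}\le \frac{\CBh\CFh}{(\wtalpha\nu)^{2}}\|\fb\|_{0,\O}\,|\xi_h^{1}-\xi_h^{2}|_{2,h}=\lambda_h\,|\xi_h^{1}-\xi_h^{2}|_{2,h},
\end{equation*}
so the assumption $\lambda_h<1$ delivers the contraction property.

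I do not anticipate any serious technical obstacle: everything reduces to two uses of skew-symmetry (one to make coercivity $\xi_h$-independent, one to cancel the transport-like term in the comparison), combined with the already established continuity of $B_h$, which is itself the place where the nontrivial discrete Sobolev embedding of Theorem~\ref{solobev:inclusion} is absorbed via the constant $\CBh$. The subtlest point is purely book-keeping---recognising that the coercivity constant $\wtalpha$ in \eqref{Ah-ellipt} is the same as the $\alpha_1$ appearing in the definition \eqref{lambdah} of $\lambda_h$, which is consistent because $A^{\E}(\phi_h,\phi_h)=|\phi_h|_{2,\E}^{2}$.
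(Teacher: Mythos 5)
Your proof is correct and fills in exactly the argument the paper only sketches (its proof is the one-line ``follows from the definition of $T^h$, Lemma~\ref{lemma:forms} and the Lax--Milgram Theorem''): Lax--Milgram with skew-symmetry~\eqref{Bh-skewsym} supplying $\xi_h$-independent coercivity, the a priori bound placing the image in $Y_h$, and the standard subtraction argument identifying the Lipschitz constant $\lambda_h$. Your bookkeeping remark that the $\wtalpha$ from~\eqref{Ah-ellipt} is the $\alpha_1$ appearing in~\eqref{lambdah} is the right reading of the paper's notation, so no gap remains.
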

\begin{proof}
The	demonstration follows from the definition of operator $T^h$, 
and Lemma~\ref{lemma:forms} the Lax-Milgram Theorem.

\end{proof}

We finish this section with the following result, which establishes that the discrete
problem is well-posed.
\begin{theorem}\label{theodiscr}
If condition \eqref{lambdah} is satisfied, then there exists a unique 
$\psi_h \in \Vh$ solution to problem \eqref{NSE:stream:disc} satisfying the 
following dependence of the data
\begin{equation}\label{depend:disc}
|\psi_h|_{2,h} \leq  \CFh (\wtalpha \nu)^{-1}\|\fb\|_{0,\O}.
\end{equation}  
\end{theorem}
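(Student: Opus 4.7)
The plan is to deduce Theorem~\ref{theodiscr} as a direct corollary of the fixed-point machinery established in Lemma~\ref{Th:welldefined}, by applying the Banach contraction principle to the operator $T^h$ on the closed ball $Y_h$ and then lifting uniqueness from $Y_h$ to the whole space $\Vh$ via an a priori bound.

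First I would observe that $Y_h$, being the closed ball of radius $\CFh(\wtalpha\nu)^{-1}\|\fb\|_{0,\O}$ in the finite-dimensional normed space $(\Vh,|\cdot|_{2,h})$, is a complete metric subspace. Under the smallness assumption \eqref{lambdah}, Lemma~\ref{Th:welldefined} gives that $T^h$ maps $Y_h$ into itself and is a contraction there; the Banach fixed-point theorem therefore supplies a unique $\psi_h\in Y_h$ with $T^h(\psi_h)=\psi_h$. By the very definition of $T^h$, this fixed point satisfies
\begin{equation*}
\nu A_h(\psi_h,\phi_h)+B_h(\psi_h;\psi_h,\phi_h)=F_h(\phi_h)\qquad\forall\phi_h\in\Vh,
\end{equation*}
that is, $\psi_h$ solves \eqref{NSE:stream:disc}. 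Moreover, since $\psi_h\in Y_h$, the bound \eqref{depend:disc} holds.

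For uniqueness in the whole space $\Vh$, I would argue that every solution already lies in $Y_h$. Indeed, taking $\phi_h=\psi_h$ in \eqref{NSE:stream:disc} and using the skew-symmetry property $B_h(\psi_h;\psi_h,\psi_h)=0$ of \eqref{Bh-skewsym} together with the coercivity in \eqref{Ah-ellipt} and the continuity in \eqref{cont-Fh} yields
\begin{equation*}
\wtalpha\nu|\psi_h|_{2,h}^2\leq \nu A_h(\psi_h,\psi_h)=F_h(\psi_h)\leq \CFh\|\fb\|_{0,\O}|\psi_h|_{2,h},
\end{equation*}
hence $|\psi_h|_{2,h}\leq \CFh(\wtalpha\nu)^{-1}\|\fb\|_{0,\O}$, so $\psi_h\in Y_h$. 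Consequently, any two solutions of \eqref{NSE:stream:disc} are fixed points of $T^h$ in $Y_h$, and the contraction property forces them to coincide.

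There is no real obstacle here since all the analytic work (Lax--Milgram for the linear problem and the contraction estimate) has already been absorbed into Lemma~\ref{Th:welldefined}; the only delicate point is the small bookkeeping step to promote uniqueness from the ball $Y_h$ to the full discrete space, which is handled cleanly by the skew-symmetry of $B_h(\cdot;\cdot,\cdot)$ that eliminates the trilinear term in the energy identity.
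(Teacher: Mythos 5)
Your proposal is correct and follows essentially the same route as the paper, which simply invokes Lemma~\ref{Th:welldefined} together with the Banach fixed-point theorem. The extra step you include --- testing \eqref{NSE:stream:disc} with $\phi_h=\psi_h$ and using the skew-symmetry \eqref{Bh-skewsym}, coercivity \eqref{Ah-ellipt} and bound \eqref{cont-Fh} to show every solution already lies in $Y_h$, so that uniqueness extends from the ball to all of $\Vh$ --- is a detail the paper leaves implicit, and it is a worthwhile clarification rather than a different method.
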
   
\begin{proof}
The proof follows from Lemma \ref{Th:welldefined} and the Banach point-fixed Theorem.
  
\end{proof}

\section{Error analysis}\label{Error:Analisis}
\setcounter{equation}{0}
In this section we will develop an error analysis for the VEM proposed in~\eqref{NSE:stream:disc}.
By exploiting the naturally skew-symmetry  property of the discrete trilinear form, and the consistence and boundedness properties of discrete bilinear form we write  an abstract convergence result for the nonlinear VE scheme. 
Then, by using the enriching operator, we establish key approximation properties 
involving the bilinear and trilinear forms, together  with the consistency 
errors, which allow the derivation of an optimal error estimate in broken  $H^2$-norm under the 
minimal regularity condition on the weak  solution  (cf. Theorem~\ref{regu-aditional}).
Moreover, by using duality arguments and the enriching operator we also establish optimal error estimates 
in the $H^1$- and $L^2$-norm under the same regularity condition on the stream-function $\psi$ and  
the density force $\fb$.

\subsection{An abstract convergence result}
We start with two technical lemmas involving the continuous
and discrete forms $B(\cdot,\cdot,\cdot)$ and $B_h(\cdot,\cdot,\cdot)$ 
defined in \eqref{B-cont}  and \eqref{disc-globB}, respectively. 
\begin{lemma}\label{cota2-B}
	Let $B(\cdot;\cdot,\cdot)$ be the trilinear form defined in \eqref{B-cont}. Then, 
	for all $\zeta \in H^{2+t}(\O)$, with $t\in (1/2,1]$, and for all 
	$\varphi \in H^2(\O)$ and  $\phi_h \in H^{1}(\CT_h)$, it holds:
	\begin{equation*}
		B(\zeta;\varphi,\phi_h) \leq  C \|\zeta\|_{2+t,\O}\|\varphi\|_{2, \O} |\phi_h|_{1,h}.
	\end{equation*}	
\end{lemma}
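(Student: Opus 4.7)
The goal is to control the trilinear form $B(\zeta;\varphi,\phi_h) = (\Delta\zeta\,\curl\varphi,\nabla\phi_h)_{0,\O}$ when $\phi_h$ is only piecewise $H^1$. Since $\nabla\phi_h$ is a well-defined $L^2(\O)$ function (understood element-wise) and its $L^2$-norm equals $|\phi_h|_{1,h}$, the strategy is a three-factor Hölder inequality, with the exponents tuned so that the Sobolev embeddings from $\Delta\zeta \in H^t(\O)$ and $\curl\varphi \in H^1(\O)^2$ both apply. The hypothesis $t>1/2$ is exactly what enables a valid choice of exponents in the 2D setting.

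First I would write
\begin{equation*}
|B(\zeta;\varphi,\phi_h)| \;=\; \Big|\int_\O \Delta\zeta\,\curl\varphi\cdot\nabla\phi_h\,\dx\Big|
\;\le\; \|\Delta\zeta\|_{L^r(\O)}\,\|\curl\varphi\|_{L^s(\O)}\,\|\nabla\phi_h\|_{L^2(\O)}
\end{equation*}
with $1/r + 1/s + 1/2 = 1$ (three-factor Hölder). The last factor coincides with $|\phi_h|_{1,h}$ by definition of the broken seminorm.

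Next, I would choose the exponents: for $t\in(1/2,1)$ take $r = 2/(1-t)$ and $s = 2/t$, so that $1/r+1/s=1/2$ holds and $s<4$, $r>4$; for the endpoint $t=1$ simply pick any large $r<\infty$ and the corresponding $s>2$. In 2D, the Sobolev embedding $H^t(\O)\hookrightarrow L^{r}(\O)$ with $r = 2/(1-t)$ holds precisely for $t\in(0,1)$, and $H^1(\O)\hookrightarrow L^s(\O)$ holds for every $s<\infty$. The condition $t>1/2$ guarantees $s=2/t<4<\infty$, so both embeddings are available with continuous constants depending only on $\O$. Applying them yields
\begin{equation*}
\|\Delta\zeta\|_{L^r(\O)} \le C\|\Delta\zeta\|_{t,\O}\le C\|\zeta\|_{2+t,\O},\qquad
\|\curl\varphi\|_{L^s(\O)} \le C\|\curl\varphi\|_{1,\O}\le C\|\varphi\|_{2,\O}.
\end{equation*}

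Substituting these bounds and $\|\nabla\phi_h\|_{L^2(\O)}=|\phi_h|_{1,h}$ into the Hölder estimate yields the claimed inequality. The only mildly delicate point is the selection of exponents; this is where the threshold $t>1/2$ enters (any smaller $t$ would force $s\ge 4$ and $r\le 4$, and the Sobolev embedding $H^t\hookrightarrow L^r$ would fail). Everything else reduces to standard inequalities, so I do not expect any serious obstacle.
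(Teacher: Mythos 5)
Your proof is correct and follows essentially the same route as the paper's: a three-factor H\"older inequality (applied elementwise, so that the last factor is exactly $|\phi_h|_{1,h}$) followed by Sobolev embeddings, the only cosmetic difference being that the paper fixes the exponents to $(4,4,2)$ via $H^{2+t}(\O)\hookrightarrow W^{2,4}(\O)$ and $H^{2}(\O)\hookrightarrow W^{1,4}(\O)$, while you let them depend on $t$. One small inaccuracy worth noting: with your choice $r=2/(1-t)$, $s=2/t$, the critical embedding $H^{t}(\O)\hookrightarrow L^{2/(1-t)}(\O)$ in fact holds for all $t\in(0,1)$, so your closing remark that $t>1/2$ is what makes the exponent selection possible is not right --- but this does not affect the validity of the argument on the stated range $t\in(1/2,1]$.
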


\begin{proof}
By using  the H\"{o}lder inequality, for each  $\zeta \in H^{2+t}(\O)$, with $t\in (1/2,1]$, 
for all $\varphi \in H^2(\O)$ and  for all $\phi_h \in H^{1}(\CT_h)$, we have
\begin{equation*}
\begin{split}
B(\zeta;\varphi,\phi_h)
&\leq \Big( \sum_{\E \in  \CT_h} \|\Delta \zeta\|^4_{L^4(\E)}\Big)^{1/4} 
\Big( \sum_{\E \in  \CT_h} \|\nabla \varphi\|^4_{L^4(\E)} \Big)^{1/4} 
\Big( \sum_{\E \in  \CT_h} \|\nabla \phi_h\|^2_{0,\E} \Big)^{1/2}\\
&\leq |\zeta|_{2,4,\O}|\varphi|_{1,4,\O} |\phi_h|_{1,h}.
\end{split} 
	\end{equation*}	
Then, by using the  Sobolev embeddings $ H^{2}(\O) \hookrightarrow  W^{1,4}(\O)$ and $ H^{2+t}(\O) \hookrightarrow  W^{2,4}(\O)$,  with $t\in (1/2,1]$, we obtain
	\begin{equation*}
		B(\zeta;\varphi, \phi_h) \leq  C \|\zeta\|_{2+t,\O}\|\varphi\|_{2, \O} |\phi_h|_{1,h},
	\end{equation*}	
	where $C$ depends only on $\O$. The proof is complete.
\end{proof}

\begin{remark}\label{remark:boundB2}
Following the above arguments, we can also prove that for all $\zeta \in H^{2+t}(\O)$, with $t\in (1/2,1]$, and for all 
$\varphi_h \in H^{1}(\CT_h)$ and  $\phi \in H^2(\O)$, it holds
\begin{equation*}
B(\zeta; \varphi_h,\phi) \leq  C \|\zeta\|_{2+t,\O}|\varphi_h|_{1,h} |\phi|_{2,\O}.
\end{equation*}	
\end{remark}
\begin{lemma}\label{lemma-tec2}
Let $\varphi \in \Phi$ and $\varphi_h \in \Vh$. 
Then, for each $\phi_h \in \Vh$,  it holds 
\begin{equation*}
|B_h(\varphi;\varphi,\phi)-B_h(\varphi_h;\varphi_h,\phi_h)| \leq 
\CBh \left( |\varphi_h|_{2,h} |\phi_h|_{2,h}
+|\varphi -\varphi_h+ \phi_h|_{2,h}(\|\varphi\|_{2, \O}+ |\varphi_h|_{2,h})\right)|\phi_h|_{2,h}. 		
	\end{equation*}
\end{lemma}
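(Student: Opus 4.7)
The plan is to use a standard add-and-subtract decomposition of the trilinear form difference, and then exploit the skew-symmetry and nullity properties of $B_h(\cdot;\cdot,\cdot)$ established in Lemma~\ref{lemma:forms} to engineer the combination $\varphi-\varphi_h+\phi_h$ that appears on the right-hand side. The key observation is that the target bound suggests we should never estimate $|\varphi-\varphi_h|_{2,h}$ in isolation but always in the combined form $|\varphi-\varphi_h+\phi_h|_{2,h}$, and this is obtained essentially for free by adding/subtracting $\phi_h$ in the appropriate slot and then invoking $B_h(\cdot;\phi_h,\phi_h)=0$.

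First I would write the trivial decomposition
\begin{equation*}
B_h(\varphi;\varphi,\phi_h)-B_h(\varphi_h;\varphi_h,\phi_h)
=\underbrace{B_h(\varphi;\varphi-\varphi_h,\phi_h)}_{=:I_1}
+\underbrace{B_h(\varphi-\varphi_h;\varphi_h,\phi_h)}_{=:I_2},
\end{equation*}
which is obtained by adding and subtracting $B_h(\varphi;\varphi_h,\phi_h)$ and using trilinearity of $B_h$ (noting that $B_h$ extends to arguments in $\Phi$, as recalled in the paragraph following \eqref{load-global}).

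For $I_1$, since $B_h(\varphi;\phi_h,\phi_h)=0$ by \eqref{Bh-skewsym} (the property extending to the first slot in $\Phi$), I may rewrite $I_1=B_h(\varphi;\varphi-\varphi_h+\phi_h,\phi_h)$ and then apply the continuity bound \eqref{cont-Bh} to obtain
\begin{equation*}
|I_1|\le \CBh\,\|\varphi\|_{2,\O}\,|\varphi-\varphi_h+\phi_h|_{2,h}\,|\phi_h|_{2,h}.
\end{equation*}
For $I_2$, I would make the analogous adjustment in the first slot, writing
\begin{equation*}
I_2=B_h(\varphi-\varphi_h+\phi_h;\varphi_h,\phi_h)-B_h(\phi_h;\varphi_h,\phi_h).
\end{equation*}
The first piece is bounded directly by \eqref{cont-Bh} as $\CBh\,|\varphi-\varphi_h+\phi_h|_{2,h}\,|\varphi_h|_{2,h}\,|\phi_h|_{2,h}$. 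For the second piece, skew-symmetry in the last two arguments gives $B_h(\phi_h;\varphi_h,\phi_h)=-B_h(\phi_h;\phi_h,\varphi_h)$, and then continuity yields $\CBh\,|\phi_h|_{2,h}^{2}\,|\varphi_h|_{2,h}$, which is exactly $\CBh\,|\varphi_h|_{2,h}\,|\phi_h|_{2,h}\cdot|\phi_h|_{2,h}$, matching the first term in the asserted bound.

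Summing the three contributions and factoring $|\phi_h|_{2,h}$ finishes the proof. The only subtle point, and the step I would think of as the main obstacle, is recognising that one should force the combination $\varphi-\varphi_h+\phi_h$ by two separate applications of the identity $B_h(\cdot;\phi_h,\phi_h)=0$ (or equivalently skew-symmetry) rather than estimating $|\varphi-\varphi_h|_{2,h}$ directly; this is what produces the structural form of the bound needed later for the fixed-point/error-estimate arguments. Once this is observed, no further estimates beyond Lemma~\ref{lemma:forms} are required.
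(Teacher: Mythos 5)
Your proof is correct and follows exactly the route the paper intends (its own proof is the one-line remark ``add and subtract adequate terms, use the skew-symmetry property \eqref{Bh-skewsym} and Theorem~\ref{solobev:inclusion}''): the decomposition into $I_1+I_2$, the insertion of $\phi_h$ via $B_h(\cdot;\phi_h,\phi_h)=0$ to create the combination $\varphi-\varphi_h+\phi_h$, and the continuity bound \eqref{cont-Bh} (which extends to arguments in $\Phi+\Vh$ precisely because Theorem~\ref{solobev:inclusion} holds on that sum space) reproduce the stated estimate term by term.
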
	

\begin{proof}
	The proof follows by adding and subtracting adequate terms  together with  
	property \eqref{Bh-skewsym} and  Theorem~\ref{solobev:inclusion}.
	
\end{proof}

In order to derive the abstract error estimate for the nonlinear VE scheme,  
we will introduce the following consistence errors.  Let $\psi \in \Phi$ be the solution 
of continuous problem \eqref{weak:stream:NS}, then we define:
\begin{align}
\calN_h(\psi;\phi_h)&:=\nu A(\psi,\phi_h)+B(\psi;\psi,\phi_h)-F(\phi_h) 
\:\qquad \forall \phi_{h} \in \Vh, \label{igual:nonconf}\\
\calC_h(\psi;\phi_h) &:= B(\psi;\psi,\phi_h)-B_h(\psi;\psi,\phi_h)\qquad\qquad \quad \forall \phi_{h} \in \Vh. \label{constency:B} 
\end{align}

The first  term above measures  to what extent the continuous  solution $\psi$ 
does not satisfy  the nonconforming virtual element formulation~\eqref{NSE:stream:disc} 
and the second term measure of the variational crime perpetrated in the discretization 
of the trilinear form $B(\cdot;\cdot,\cdot)$. In addition, we define the following quantity:
\begin{equation}\label{constency:F}
\|F-F_h\|:= \sup_{\substack{\phi_h \in \Vh \\ \phi_h\neq\, 0}}
\frac{|F(\phi_h)-F_h(\phi_h)|}{|\phi_h|_{2,h}}. 
\end{equation}

In subsection~\ref{approx:results} we will establish approximation properties for the above terms.
Next, we provide the following Strang-type result for our nonlinear VE scheme.
\begin{theorem}[Abstract convergence result]\label{pre:converg}
Let $\psi$ and $\psi_{h}$ be the unique
	solutions to  problems~\eqref{weak:stream:NS} and \eqref{NSE:stream:disc}, respectively.
	There exists a positive constant $C$, independent of $h$, such that
	\begin{equation*} 
		|\psi-\psi_h|_{2,h} \leq C\Bigg(\inf_{\phi_h \in \Vh} |\psi-\phi_h|_{2,h} +\inf_{\chi \in \P_2(\CT_h)}|\psi-\chi|_{2,h}
		+ \|F-F_h\|+ \sup_{\substack{\phi_h \in \Vh \\ \phi_h\neq\, 0}} 
		\left( \frac{|\calN_h(\psi;\phi_h)|}{|\phi_h|_{2,h}} +\frac{|\calC_h(\psi;\phi_h)|}{|\phi_h|_{2,h}} \right) \Bigg), 	
	\end{equation*}
	where $\calN_h(\psi;\cdot)$ and $\calC_h(\psi;\cdot) $
	are the consistency errors defined in \eqref{igual:nonconf} and  \eqref{constency:B}.
\end{theorem}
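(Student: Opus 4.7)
The plan is to follow a Strang-type argument adapted to the nonlinear nonconforming setting, where the naturally skew-symmetric property of $B_h$ and the smallness hypothesis from Theorem~\ref{theodiscr} will tame the nonlinearity. Fix arbitrary $\phi_h \in \Vh$ and $\chi \in \P_2(\CTh)$, and set $\delta_h := \phi_h - \psi_h \in \Vh$. Coercivity of $A_h$, cf.~\eqref{Ah-ellipt}, gives
\begin{equation*}
\nu\wtalpha\,|\delta_h|^2_{2,h} \le \nu A_h(\delta_h,\delta_h) = \nu A_h(\phi_h,\delta_h) - \nu A_h(\psi_h,\delta_h).
\end{equation*}
Using the discrete equation~\eqref{NSE:stream:disc} to replace $\nu A_h(\psi_h,\delta_h)$, and then rewriting $F(\delta_h)$ by means of the consistency residuals~\eqref{igual:nonconf}--\eqref{constency:B}, one arrives at the master identity
\begin{equation*}
\begin{split}
\nu\wtalpha\,|\delta_h|^2_{2,h} \le\;&\nu\bigl[A_h(\phi_h,\delta_h) - A(\psi,\delta_h)\bigr] + \bigl[F(\delta_h)-F_h(\delta_h)\bigr] \\
&+ \bigl[B_h(\psi_h;\psi_h,\delta_h) - B_h(\psi;\psi,\delta_h)\bigr] + \calN_h(\psi;\delta_h) - \calC_h(\psi;\delta_h),
\end{split}
\end{equation*}
whose four brackets correspond exactly to the four suprema in the statement.

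The linear-in-$\delta_h$ pieces are now routine. For the bilinear term I insert $\chi$ and invoke the polynomial consistency~\eqref{consis-A} of $A_h$ elementwise (so that $A_h(\chi,\delta_h) = A(\chi,\delta_h)$), writing $A_h(\phi_h,\delta_h) - A(\psi,\delta_h) = A_h(\phi_h-\chi,\delta_h) + A(\chi-\psi,\delta_h)$ and bounding by $(\CAh+1)(|\psi-\phi_h|_{2,h} + |\psi-\chi|_{2,h})|\delta_h|_{2,h}$ via the boundedness in~\eqref{Ah-ellipt} and the Cauchy--Schwarz inequality applied to $A$. The load term is handled directly through the definition~\eqref{constency:F} of $\|F-F_h\|$, and the last two residuals match exactly the remaining suprema.

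The main obstacle is the trilinear contribution $B_h(\psi_h;\psi_h,\delta_h) - B_h(\psi;\psi,\delta_h)$. The plan is to split it as
\begin{equation*}
B_h(\psi_h;\psi_h-\psi,\delta_h) + B_h(\psi_h-\psi;\psi,\delta_h),
\end{equation*}
using the extension of $B_h$ to $\Phi$ noted in Section~\ref{projec:and:forms}, and to apply the continuity bound~\eqref{cont-Bh} together with the a~priori estimates $|\psi_h|_{2,h} \le \CFh(\wtalpha\nu)^{-1}\|\fb\|_{0,\O}$ from~\eqref{depend:disc} and $\|\psi\|_{2,\O} \le C_F\nu^{-1}\|\fb\|_{0,\O}$. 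The result is a constant multiple of $\CBh\|\fb\|_{0,\O}\,|\psi_h-\psi|_{2,h}\,|\delta_h|_{2,h}$. Writing $|\psi_h-\psi|_{2,h} \le |\psi-\phi_h|_{2,h} + |\delta_h|_{2,h}$ then produces one harmless contribution and one proportional to $|\delta_h|^2_{2,h}$, with coefficient precisely of the form appearing in~\eqref{lambdah}; the smallness hypothesis therefore allows this offending term to be absorbed into the coercive left-hand side. Dividing by $|\delta_h|_{2,h}$, applying the triangle inequality $|\psi-\psi_h|_{2,h} \le |\psi-\phi_h|_{2,h} + |\delta_h|_{2,h}$, and taking the infimum over $\phi_h \in \Vh$ and $\chi \in \P_2(\CTh)$ yields the stated estimate. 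I anticipate that the skew-symmetry~\eqref{Bh-skewsym} (which also gives the handy identity $B_h(\psi_h;\psi_h,\delta_h) = B_h(\psi_h;\phi_h,\delta_h)$) plays its decisive role here, by preventing an uncontrolled cubic-in-$\delta_h$ contribution from ever appearing.
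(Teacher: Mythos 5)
Your proposal is correct and follows essentially the same Strang-type route as the paper: the same coercivity step, the same master identity obtained by inserting the discrete equation and the residuals $\calN_h$ and $\calC_h$, the same polynomial-consistency treatment of the bilinear term, and the same absorption of the nonlinear contribution into the coercive left-hand side via the smallness hypothesis. The one place where you genuinely diverge is the trilinear difference. The paper routes it through Lemma~\ref{lemma-tec2}, whose decomposition is arranged so that skew-symmetry converts the factor $|\psi-\psi_h|_{2,h}$ into the approximation error $|\psi-\phi_h|_{2,h}$ precisely in the term carrying the weight $\|\psi\|_{2,\O}$; as a result the only term that must be absorbed is $\CBh|\psi_h|_{2,h}|\delta_h|^2_{2,h}$, and the absorption works under exactly $\lambda_h<1$. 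Your splitting $B_h(\psi_h;\psi_h-\psi,\delta_h)+B_h(\psi_h-\psi;\psi,\delta_h)$ followed by raw continuity and the triangle inequality $|\psi-\psi_h|_{2,h}\le|\psi-\phi_h|_{2,h}+|\delta_h|_{2,h}$ produces the larger coefficient $\CBh\bigl(|\psi_h|_{2,h}+\|\psi\|_{2,\O}\bigr)$ in front of $|\delta_h|^2_{2,h}$, so the condition you need for absorption is of the same form as~\eqref{lambdah} but with a strictly larger constant — your claim that it is \emph{precisely} the coefficient of~\eqref{lambdah} is not quite accurate, and strictly speaking your argument requires a marginally stronger smallness assumption than the one guaranteeing unique solvability. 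Relatedly, your closing remark overstates the role of skew-symmetry: as you have actually written the trilinear bound, \eqref{Bh-skewsym} is never invoked; it is only the paper's sharper decomposition that uses it, and that is exactly what buys the optimal constant. Both routes prove the stated estimate; yours is slightly more elementary at the price of a slightly more restrictive (but qualitatively identical) data condition.
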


\begin{proof}
Let $\phi_h\in\Vh$ and set $\delta_h:=\phi_h-\psi_h$. Then,  by using triangle inequality we obtain
\begin{equation}\label{trian0}
\vert\psi-\psi_h\vert_{2,h}\leq \vert\psi-\phi_h\vert_{2,h}+\vert \delta_h\vert_{2,h}.
\end{equation}
Now, by using the property \eqref{Ah-ellipt}, the consistence of bilinear forms $A_h^{\E}(\cdot, \cdot)$  
	(cf. \eqref{consis-A}), we have	
	\begin{equation}\label{trian1}
		\begin{split}
			\nu \widetilde{\alpha}|\delta_h|^2_{2,h} & \leq \nu A_{h}(\delta_h,\delta_h) 
			=\nu A_{h}(\phi_h,\delta_h)-\nu A_{h}(\psi_h,\delta_h)\\
			&=\nu A_{h}(\phi_h,\delta_h)-F_{h}(\delta_h)+B_h(\psi_h;\psi_h,\delta_h)\\
			&=\nu \sum_{\E \in  \CTh} \left( A_h^{\E}(\phi_h-\chi,\delta_h)+A^{\E}(\chi-\psi,\delta_h) \right)
			+\nu \sum_{\E \in  \CTh}A^{\E}(\psi,\delta_h)-F_{h}(\delta_h)+B_h(\psi_h;\psi_h,\delta_h)\\
			&=\nu \sum_{\E \in  \CTh} \left( A_h^{\E}(\phi_h-\chi,\delta_h)+A^{\E}(\chi-\psi,\delta_h) \right) +(\nu A(\psi,\delta_h)-F_{h}(\delta_h)+B_h(\psi_h;\psi_h,\delta_h))\\
			&=\nu \sum_{\E \in  \CTh} \left( A_h^{\E}(\phi_h-\chi,\delta_h)+A^{\E}(\chi-\psi,\delta_h) \right)
			+ \calN_h(\psi;\psi,\delta_h)\\
			& \qquad +[F(\delta_h)-F_{h}(\delta_h)]+[B_h(\psi_h;\psi_h,\delta_h)-B(\psi;\psi,\delta_h)],
		\end{split}
	\end{equation} 
where we have added and subtracted adequate terms and $\chi$ is an arbitrary element of $\P_2(\CT_h)$.
	
From the continuity of bilinear forms $A^{\E}(\cdot, \cdot)$, $A_h^{\E}(\cdot, \cdot)$, 
and  by using the triangular inequality, we have  
	\begin{equation*}
		\begin{split}	
			\sum_{\E \in  \CTh} \left( A_h^{\E}(\phi_h-\chi,\delta_h)+A^{\E}(\chi-\psi,\delta_h) \right) 
			& \leq  C (|\phi_h-\psi|_{2,h}+|\psi-\chi|_{2,h})|\delta_h|_{2,h}.
		\end{split} 
	\end{equation*}

Now, we add and subtract the term $B_{h}(\psi;\psi,\delta_h)$, then applying 
Lemma~\ref{lemma-tec2}, we obtain
	\begin{equation}\label{cota:TB}
		\begin{split}
			|B_h(\psi_h;\psi_h,\delta_h)-B(\psi;\psi,\delta_h)|
			& \leq |B_{h}(\psi_h;\psi_h,\delta_h)-B_h(\psi;\psi,\delta_h)|+|B_{h}(\psi;\psi,\delta_h)-B(\psi;\psi,\delta_h)|\\
			&\le \, \CBh\left(|\psi_h|_{2,h}|\delta_h|_{2,h}+ |\psi-\phi_h|_{2,h}\|\psi\|_{2+s,\O}( \|\psi\|_{2,\O}
			+  |\psi_h|_{2,h}) \right)|\delta_h|_{2,h} \\
			& \quad +|\calC_h(\psi;\delta_h)|.
			\end{split}
	\end{equation}
	Therefore, combining  \eqref{trian1}-\eqref{cota:TB}, we get  
	\begin{equation*}
		\begin{split}
			\nu \wtalpha|\delta_h|_{2,h} &\leq  C (|\psi-\phi_h|_{2,h}+|\psi-\chi|_{2,h}) +
			\CBh|\psi_h|_{2,h} +\|F-F_h\|+ |\calN_h(\psi;\delta_h)|+|\calC_h(\psi;\delta_h)|	.
		\end{split}
	\end{equation*}
	
	From the inequality above, we obtain 
	\begin{equation*}
		\begin{split}
			\nu \wtalpha (1-\CBh(\nu \wtalpha)^{-1}|\psi_h|_{2,h}) |\delta_h|_{2,h}
			\leq   C \big(|\psi-\phi_h|_{2,h}+|\psi-\chi|_{2,h}+ \|F-F_h\|+ |\calN_h(\psi;\delta_h)|+|\calC_h(\psi;\delta_h)| \big).
		\end{split}
	\end{equation*} 
	
	By using \eqref{depend:disc} and condition \eqref{lambdah} we have that
	$(1-\CBh(\nu \wtalpha)^{-1}|\psi_h|_{2,h})\geq 1- \lambda_h > 0$. 
	Therefore, from above inequality, we have 
	\begin{equation*}
		\begin{split}
			|\delta_h|_{2,h}
			\leq   C \big(|\psi-\phi_h|_{2,h}+|\psi-\chi|_{2,h}+ \|F-F_h\|+ |\calN_h(\psi;\delta_h)|+|\calC_h(\psi;\psi,\delta_h)| \big).
		\end{split}
	\end{equation*}
Finally, the desired result follows from \eqref{trian0} and  the above estimate.
\end{proof}

The next step is to provide approximation properties that can be used in 
Theorem~\ref{pre:converg}. In next subsection we will establish such properties.

\subsection{Approximation results and a priori error estimate} \label{approx:results}
We have the following approximation result
for polynomials on star-shaped domains. 
\begin{proposition}\label{approx:poly}
For every $\phi\in H^{2+t}(\E)$, with $t \in [0,1]$,
there exist $\phi_{\pi}\in\P_{2}(\E)$ and $C>0$, independent of $h$, such that	
\begin{equation*}
\|\phi-\phi_{\pi}\|_{\ell,\E} 
\leq Ch_{\E}^{2+t-\ell}|\phi|_{2+t,\E}, \qquad \ell=0,1,2. 	
\end{equation*}
\end{proposition}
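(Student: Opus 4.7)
The plan is to construct $\phi_\pi$ as an averaged Taylor polynomial of degree two in the Dupont--Scott sense, and then derive the estimate from a scaled Bramble--Hilbert lemma. The result is classical; the essential point is that Assumption ${\bf A_1}$ is exactly the hypothesis one needs to run this machinery uniformly in $\E$.

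In more detail, I would proceed in three steps. First, invoke ${\bf A_1}$ to pick a ball $B\subset\E$ of radius $\rho h_\E$ with respect to which $\E$ is star-shaped, and set $\phi_\pi := Q^{2}\phi$, the averaged Taylor polynomial of order two associated with $B$. By construction $\phi_\pi\in\P_2(\E)$, and $Q^{2}$ is a projector onto $\P_2(\E)$ that is continuous on $L^1(\E)$ (hence on every $H^{\ell}(\E)$ with $\ell\leq 2$). Second, for the integer cases $t=0,1$ and $\ell = 0,1,2$, map $\E$ to a reference configuration $\widehat{\E}$ of unit diameter; apply the classical Bramble--Hilbert lemma on $\widehat{\E}$, using that $\phi - Q^{2}\phi$ vanishes on $\P_2$ so that the error functional factors through $H^{2+t}/\P_2$; and scale back, picking up the factor $h_\E^{2+t-\ell}$. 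Third, handle the fractional range $t\in(0,1)$ by real interpolation between the two integer cases, exploiting the fact that $H^{2+t}(\E)$ is an interpolation space between $H^{2}(\E)$ and $H^{3}(\E)$ and that $Q^{2}$ has the required uniform boundedness on both endpoints.

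The only real obstacle is bookkeeping: one must verify that every constant produced by the scaling and the Bramble--Hilbert step depends only on the chunkiness parameter $\rho$, and not on $h_\E$ or on the particular shape of $\E$. This is precisely what Assumptions ${\bf A_1}$ and ${\bf A_2}$ guarantee, since they control both the aspect ratio of $\E$ and the relative size of its edges. In practice, rather than redoing the Dupont--Scott construction from scratch, I would simply invoke the polynomial approximation estimate on star-shaped polygons as stated in the standard VEM references, e.g. \cite{BBCMMR2013}, where the result is proved under exactly the mesh assumptions adopted here.
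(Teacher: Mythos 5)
Your proposal is correct and is precisely the classical Dupont--Scott / Bramble--Hilbert argument (with real interpolation for fractional $t$) that underlies this standard estimate; the paper itself offers no proof, simply stating the result as the well-known polynomial approximation property on star-shaped elements under assumptions ${\bf A_1}$--${\bf A_2}$, exactly as in the VEM references you cite. So your route coincides with the one the paper implicitly relies on.
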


For the virtual space $\Vh$ we  have the following approximation result 
(see \cite{AMV2018,ZZCM2018,LZHC2021,CKP-arxiv2022}).
\begin{proposition}\label{approx:virtual}
For each  $\phi\in H^{2+t}(\O)$, with $t \in [0,1]$, there exist $\phi_{I}\in\Vh$ and $C>0$, independent of $h$, such that
\begin{equation*}
\|\phi-\phi_{I}\|_{\ell,\E} \leq Ch_{\E}^{2+t-\ell}|\phi|_{2+t,\E}, \qquad \ell=0,1,2. 
\end{equation*}
\end{proposition}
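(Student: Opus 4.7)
The plan is to build $\phi_I$ element by element using the unisolvent degrees of freedom $\mathbf{D}_{\calV}{1}$ and $\mathbf{D}_{\calV}{2}$ (vertex values and edge normal-derivative moments), which is legitimate since $\phi \in H^{2+t}(\O) \hookrightarrow C^0(\overline{\O})$ in two dimensions for any $t\geq 0$ and edge traces of $\partial_{\nb_e}\phi$ lie in $L^1(e)$. Gluing the local pieces is consistent across interelement edges because the degrees of freedom are shared, so $\phi_I \in \Vh$ is automatically in $H^{2,\nc}(\CTh)$.

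Next I would exploit the polynomial preservation property. Since $\P_2(\E) \subseteq \VK$ (Lemma~\ref{lemma:prop:Morley}), the interpolation operator $I_\E : H^{2+t}(\E) \to \VK$ defined by matching the degrees of freedom satisfies $I_\E \chi = \chi$ for every $\chi \in \P_2(\E)$. Hence, given the polynomial $\phi_\pi \in \P_2(\E)$ from Proposition~\ref{approx:poly}, one writes
\begin{equation*}
\phi - \phi_I = (\phi - \phi_\pi) - I_\E(\phi - \phi_\pi) \qquad \text{on } \E.
\end{equation*}
The first term is already controlled by Proposition~\ref{approx:poly}. Only the second needs work.

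The key technical step is a local stability bound of the form
\begin{equation*}
\|I_\E \eta\|_{\ell,\E} \leq C\,h_\E^{2-\ell}\Bigl(\sum_{i}|\dof_i(\eta)|^2\Bigr)^{1/2},
\qquad \ell = 0,1,2,
\end{equation*}
for every $\eta \in \HdoK$ sufficiently smooth, which is proved via the standard Dupont--Scott scaling argument on the reference element together with norm equivalence of the finite-dimensional space $\VK$. Applying this with $\eta = \phi - \phi_\pi$ reduces the task to estimating $|\dof_i(\phi - \phi_\pi)|$ for vertex-values and normal-derivative edge moments. A scaled trace inequality combined with the Bramble--Hilbert lemma gives, for each vertex $\bv$ of $\E$ and each edge $e \in \EEh^\E$,
\begin{equation*}
|(\phi-\phi_\pi)(\bv)| + h_e^{-1/2}\|\partial_{\nb_e}(\phi-\phi_\pi)\|_{0,e}
\leq C\,h_\E^{t}|\phi|_{2+t,\E},
\end{equation*}
which plugged back yields $\|I_\E(\phi-\phi_\pi)\|_{\ell,\E} \leq C h_\E^{2+t-\ell}|\phi|_{2+t,\E}$. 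Combining this with Proposition~\ref{approx:poly} through the triangle inequality closes the estimate.

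The main obstacle is establishing the scaled stability bound for $I_\E$ on general star-shaped polygons, since $\VK$ is defined only implicitly and contains nonpolynomial functions. The standard remedy, which I would adopt, is to invoke assumptions $\mathbf{A_1}$--$\mathbf{A_2}$ and the virtual subtriangulation from property $\mathbf{P_1}$: work on a reference configuration, use the fact that the dimension of $\VK$ is bounded uniformly in terms of the number of degrees of freedom, and deduce equivalence of the discrete $\ell^2$-norm of the degrees of freedom with the scaled $H^2$-seminorm on $\VK$. A minor but delicate point is that for the boundary case $t=0$ the embedding $H^2 \hookrightarrow C^0$ is used with its sharp constant on quasi-uniform patches, which is where the shape-regularity hypotheses are essential.
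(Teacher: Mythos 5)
The paper offers no proof of this proposition at all: it is quoted as a known result with a pointer to \cite{AMV2018,ZZCM2018,LZHC2021,CKP-arxiv2022}, so there is no in-paper argument to compare against. Your sketch reconstructs the standard proof from that literature --- DOF-interpolant, polynomial reproduction of $\P_2(\E)$, the splitting $\phi-\phi_I=(\phi-\phi_\pi)-I_\E(\phi-\phi_\pi)$, stability of $I_\E$ in terms of the DOF vector via scaling and norm equivalence on $\VK$, and Bramble--Hilbert plus scaled trace estimates for the DOF values --- and the architecture is sound. Two remarks. First, your intermediate normalizations are inconsistent: with the paper's DOFs ($\phi_h(\bv_i)$ and $(\partial_{\nb_e}\phi_h,1)_{0,e}$, both of which are invariant under the dilation $x=h_\E\hat x$), the correct stability estimate is $|I_\E\eta|_{\ell,\E}\leq C h_\E^{1-\ell}\bigl(\sum_i|\dof_i(\eta)|^2\bigr)^{1/2}$, not $h_\E^{2-\ell}$ (take $\eta\equiv 1$ to see that your version fails for $\ell=0$), while the sharp DOF bounds are $|(\phi-\phi_\pi)(\bv)|\leq Ch_\E^{1+t}|\phi|_{2+t,\E}$ and $|(\partial_{\nb_e}(\phi-\phi_\pi),1)_{0,e}|\leq Ch_\E^{1+t}|\phi|_{2+t,\E}$; your two exponents are each off by one power of $h_\E$ in opposite directions and happen to cancel to the correct $h_\E^{2+t-\ell}$, but as written neither intermediate claim is provable. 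Second, for $\phi_I$ to land in the global space $\Vh$ one needs $\phi$ to vanish at boundary vertices and to have zero mean normal derivative on boundary edges, i.e.\ effectively $\phi\in\Phi\cap H^{2+t}(\O)$; this imprecision is already present in the paper's statement (which only assumes $\phi\in H^{2+t}(\O)$) and is how the proposition is actually used later, so it is a shared blemish rather than a defect of your argument.
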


Let $E_h: \Vh \to \calW^{\c}_h$ be the restriction of the operator
$\widetilde{E}_h$ to the space $\Vh$, i.e., $E_h :=\widetilde{E}_h|_{\Vh}$. 
We note that this operator satisfies the approximation properties in 
Proposition~\ref{proposition-Eh}. Then, by using the operator $E_h$, we will establish 
an error estimate involving the bilinear form 
$A(\cdot,\cdot)$, which will be useful to obtain an error estimate in broken
$H^2$-norm under minimal regularity condition on the exact stream-function $\psi$ 
(cf. Theorem~\ref{regu-aditional}).   
\begin{lemma}\label{Lemma:A}
Let $\varphi \in H^{2+t}(\O)$, with $t \in [0,1]$. Then, for all $\phi_h \in \Vh$ 
there exists a positive constant $C$, independent of $h$, such that 
\begin{equation*}
A(\varphi,\phi_h-E_h \phi_h) \leq Ch^{t}\|\varphi\|_{2+t,\O} |\phi_h|_{2,h}.
\end{equation*}
\end{lemma}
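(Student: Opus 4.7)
The plan is to gain the factor $h^t$ through a local polynomial approximation of $\varphi$, and then treat the resulting polynomial-against-$\psi_h$ term by integration by parts combined with the moment-cancellation properties of the Morley-type space and the vertex values built into $E_h$. Set $\psi_h := \phi_h - E_h\phi_h$. The bound is trivial for $t=0$ by Cauchy--Schwarz and the stability part of Proposition \ref{proposition-Eh}, so assume $t>0$. On each $\E \in \CTh$ choose $\varphi_\pi^\E \in \P_2(\E)$ from Proposition \ref{approx:poly}, set the piecewise polynomial $\varphi_\pi$ by $\varphi_\pi|_\E := \varphi_\pi^\E$, and split
\[
A(\varphi,\psi_h) = \sum_{\E \in \CTh}(\bD^2(\varphi-\varphi_\pi),\bD^2\psi_h)_{0,\E} + \sum_{\E \in \CTh}(\bD^2\varphi_\pi,\bD^2\psi_h)_{0,\E} =: T_1 + T_2.
\]

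For $T_1$, Cauchy--Schwarz combined with the approximation estimate $|\varphi-\varphi_\pi^\E|_{2,\E} \leq Ch_\E^t|\varphi|_{2+t,\E}$ and the stability estimate $|\psi_h|_{2,\E} \leq C|\phi_h|_{2,\omega(\E),h}$ from Proposition \ref{proposition-Eh} gives the elementwise bound $Ch_\E^t|\varphi|_{2+t,\E}|\phi_h|_{2,\omega(\E),h}$. A discrete Cauchy--Schwarz over elements and the bounded overlap of the patches $\omega(\E)$ then yield $|T_1| \leq Ch^t \|\varphi\|_{2+t,\O}|\phi_h|_{2,h}$.

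For $T_2$, observe that $M_\E := \bD^2\varphi_\pi|_\E$ is a constant $2{\times}2$ matrix, so elementwise integration by parts gives $(M_\E,\bD^2\psi_h)_{0,\E} = \int_{\partial\E}(M_\E\nb_\E)\cdot\nabla\psi_h$. Rewriting as a sum over edges and inserting a common edge-matrix $M_e$ (for instance the $L^2$-average of $\bD^2\varphi$ on the edge patch $\omega_e = \E^+ \cup \E^-$), the interior-edge contribution breaks into a ``jump piece'' $\int_e M_e\nb_e\cdot\jump{\nabla\psi_h}$ and a residual in $M_{\E^\pm}-M_e$. The jump piece vanishes because (i) $E_h\phi_h \in \Phi$ is $H^2$-conforming, so $\jump{\nabla\psi_h} = \jump{\nabla\phi_h}$ on every interior edge; (ii) decomposing $\jump{\nabla\phi_h}$ into normal and tangential parts, the constant factor $M_e\nb_e\cdot\nb_e$ annihilates $(\jump{\partial_{\nb_e}\phi_h},1)_{0,e}$ by the moment condition built into $\HncT$; and (iii) the constant factor $M_e\nb_e\cdot\tb_e$ annihilates $(\jump{\partial_{\tb_e}\phi_h},1)_{0,e}$, since the fundamental theorem of calculus together with the continuity of $\phi_h$ at internal vertices and the vanishing $\phi_h(\bv)=0$ at boundary vertices imply $\int_e \partial_{\tb_e}\phi_h|_{\E^+} = \int_e \partial_{\tb_e}\phi_h|_{\E^-}$. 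Boundary edges contribute zero for analogous reasons, using $\nabla E_h\phi_h|_{\Gamma}=0$, the moment $(\partial_{\nb_e}\phi_h,1)_{0,e}=0$ for $e \subset \Gamma$, and $\phi_h(\bv)=0$ at boundary vertices.

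What remains of $T_2$ is the residual $\sum_e \int_e (M_{\E^\pm}-M_e)\nb_e\cdot\nabla\psi_h|_{\E^\pm}$. A scaled trace inequality on $\omega_e$ combined with a Bramble--Hilbert estimate for $\varphi$ produces $\|M_{\E^\pm}-M_e\|_{0,e} \leq Ch_e^{t-1/2}|\varphi|_{2+t,\omega_e}$, while a discrete trace inequality and Proposition \ref{proposition-Eh} give $\|\nabla\psi_h\|_{0,e} \leq Ch_\E^{1/2}|\phi_h|_{2,\omega(\E),h}$; the product is of order $h_e^t$ per edge, and Cauchy--Schwarz over edges with bounded overlap closes the estimate $|T_2| \leq Ch^t\|\varphi\|_{2+t,\O}|\phi_h|_{2,h}$. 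The main obstacle is the residual trace estimate in the regime $t \leq 1/2$, where $\bD^2\varphi$ need not admit an $L^2$ trace on $e$; this I would resolve either through fractional Sobolev trace theory or, more pragmatically, by interpolating between the trivial bound at $t = 0$ and the clean one at $t=1$.
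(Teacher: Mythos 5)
Your argument is correct and is, in substance, the standard ``medius analysis'' proof of this estimate: the paper itself does not prove the lemma in situ but defers to \cite[Lemma 4.10]{AMS2022-CI2MA}, where exactly this strategy (piecewise $\P_2$ approximation of $\varphi$, elementwise integration by parts of the constant-Hessian term, cancellation of edge jumps via the moment conditions of $\HncT$ and the $H^2$-conformity of $E_h\phi_h$) is carried out. So you have reconstructed the intended proof rather than found a new one.

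One clarification on the point you flag as ``the main obstacle'': there is no obstacle for $t\le 1/2$, because no trace of $\bD^2\varphi$ on $e$ is ever needed. Both $M_{\E^{\pm}}=\bD^2\varphi_\pi|_{\E^{\pm}}$ and your edge matrix $M_e$ (the patch average of $\bD^2\varphi$) are \emph{constant} matrices, so $\|M_{\E^{\pm}}-M_e\|_{0,e}=h_e^{1/2}\,|M_{\E^{\pm}}-M_e|$, and the constant difference is estimated entirely by $L^2$ norms over $\E^{\pm}$ and $\omega_e$: by norm equivalence on constants, $|M_{\E^{\pm}}-M_e|\le C|\E^{\pm}|^{-1/2}\big(\|\bD^2(\varphi-\varphi_\pi)\|_{0,\E^{\pm}}+\|\bD^2\varphi-M_e\|_{0,\omega_e}\big)\le Ch_{\E}^{t-1}|\varphi|_{2+t,\omega_e}$, using Proposition~\ref{approx:poly} for the first term and a Bramble--Hilbert (fractional Poincar\'e) inequality on the patch $\omega_e$ for the second. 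The only genuine technicality is the uniformity of that patch inequality over mesh shapes, which holds under ${\bf A_1}$--${\bf A_2}$; your fallback of interpolating between $t=0$ and $t=1$ also disposes of it. Two cosmetic remarks: for boundary edges you must still carry the residual piece $(M_{\E}-M_e)\nb_e\cdot\nabla\psi_h$ alongside the exact cancellation (same estimate as for interior edges), and the tangential cancellation $\int_e\jump{\partial_{\tb_e}\phi_h}=0$ uses that the edge traces of $\phi_h$ from either side are absolutely continuous with matching endpoint values, which holds since $\phi_h|_e\in\P_2(e)$ and $\phi_h$ is continuous at the vertices.
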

\begin{proof}
The proof has been established in \cite[Lemma 4.10]{AMS2022-CI2MA}.
\end{proof}

The following  result establishes error estimates for the consistence errors  
$\calN_h(\psi;\cdot)$ and $\calC_h(\psi;\cdot)$  defined in \eqref{igual:nonconf}
 and \eqref{constency:B}, respectively. 
\begin{lemma}\label{consist:term}
Let $\psi \in H^{2+\gamma}(\O)\cap \Phi$  be the solution of problem \eqref{weak:stream:NS}.
Then, for all $\phi_h \in \Vh$, there exists a constant $C>0$, independent to $h$, such that  
\begin{equation*}
\begin{split}
|\calN_h(\psi;\phi_h)| &\leq Ch^{\gamma}(\|\psi\|_{2+\gamma,\O}+\|\fb\|_{0,\O})|\phi_h|_{2,h},\\
|\calC_h(\psi;\phi_h)| &\leq Ch^{\gamma}(\|\psi\|_{1+\gamma,\O} + \|\psi\|_{2,\O}) \|\psi\|_{2+\gamma,\O}|\phi_h|_{2,h}.
		\end{split}
\end{equation*}
	\end{lemma}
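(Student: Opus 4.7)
\medskip
\noindent\textbf{Proof plan.}
The plan is to handle each consistency error separately, using the enriching operator $\widetilde{E}_h$ for the nonconformity term $\calN_h$ and polynomial projection orthogonalities together with the discrete Sobolev embedding for the variational crime term $\calC_h$.

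\emph{Step 1 (bound on $\calN_h$).} The key observation is that since $E_h\phi_h \in \calW^{\c}_h \subset \Phi$, it is an admissible test function in the continuous problem \eqref{weak:stream:NS}; thus $\calN_h(\psi;E_h\phi_h)=0$. Writing $\phi_h=(\phi_h-E_h\phi_h)+E_h\phi_h$ and using the definition \eqref{igual:nonconf}, I would reduce the estimate to bounding
\begin{equation*}
\calN_h(\psi;\phi_h)
=\nu A(\psi,\phi_h-E_h\phi_h)
+B(\psi;\psi,\phi_h-E_h\phi_h)
-F(\phi_h-E_h\phi_h).
\end{equation*}
The first term is controlled directly by Lemma~\ref{Lemma:A} with $t=\gamma$, yielding $Ch^{\gamma}\|\psi\|_{2+\gamma,\O}|\phi_h|_{2,h}$. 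The third term is handled with Cauchy--Schwarz and the $H^1$-approximation estimate $|\phi_h-E_h\phi_h|_{1,h}\le Ch|\phi_h|_{2,h}$ from Proposition~\ref{proposition-Eh}, giving $Ch\|\fb\|_{0,\O}|\phi_h|_{2,h}\le Ch^{\gamma}\|\fb\|_{0,\O}|\phi_h|_{2,h}$ since $\gamma\le 1$. For the trilinear term I would invoke Lemma~\ref{cota2-B} plus the same $H^1$-bound on $\phi_h-E_h\phi_h$, and then absorb $\|\psi\|_{2,\O}\le C_F\nu^{-1}\|\fb\|_{0,\O}$ via the continuous dependence, which produces the $\|\fb\|_{0,\O}$ factor that appears in the stated bound.

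\emph{Step 2 (bound on $\calC_h$).} On each element, by definition of $B_h^{\E}$ and the $L^2$-orthogonality of $\boldsymbol{\Pi}^1_{\E}$ against any $\mathbb{P}_1(\E)^2$-function (noting $\Pi^0_{\E}\Delta\psi\,\boldsymbol{\Pi}^1_{\E}\curl\psi\in\mathbb{P}_1(\E)^2$), the difference reduces to
\begin{equation*}
B^{\E}(\psi;\psi,\phi_h)-B_h^{\E}(\psi;\psi,\phi_h)
=\bigl((\Delta\psi-\Pi^0_{\E}\Delta\psi)\curl\psi,\nabla\phi_h\bigr)_{0,\E}
+\bigl(\Pi^0_{\E}\Delta\psi\,(\curl\psi-\boldsymbol{\Pi}^1_{\E}\curl\psi),\nabla\phi_h\bigr)_{0,\E}.
\end{equation*}
For the first piece I would apply H\"older with exponents $(2,4,4)$, use the standard polynomial approximation $\|\Delta\psi-\Pi^0_{\E}\Delta\psi\|_{0,\E}\le Ch_{\E}^{\gamma}|\psi|_{2+\gamma,\E}$, sum with H\"older, and invoke the Sobolev embedding $H^{\gamma}(\O)\hookrightarrow L^4(\O)$ (valid for $\gamma>1/2$) to obtain $\|\curl\psi\|_{L^4(\O)}\le C\|\psi\|_{1+\gamma,\O}$, together with Theorem~\ref{solobev:inclusion} for $|\phi_h|_{1,4,h}$. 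This delivers $Ch^{\gamma}\|\psi\|_{2+\gamma,\O}\|\psi\|_{1+\gamma,\O}|\phi_h|_{2,h}$. For the second piece the $L^2$-approximation $\|\curl\psi-\boldsymbol{\Pi}^1_{\E}\curl\psi\|_{0,\E}\le Ch_{\E}^{1+\gamma}|\psi|_{2+\gamma,\E}$, H\"older with $(4,2,4)$, the $L^4$-stability \eqref{bound:proj} of $\Pi^0_{\E}$, and the continuous embedding $H^1(\O)\hookrightarrow L^4(\O)$ applied to $\Delta\psi$ (used only through $\|\psi\|_{2,\O}$ paired with a smoother factor) yields a bound of order $h^{1+\gamma}\le h^{\gamma}$ with the factor $\|\psi\|_{2,\O}\|\psi\|_{2+\gamma,\O}$. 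Adding the two pieces produces the stated bound.

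\emph{Main obstacle.} The delicate point is the $\calC_h$ estimate: one must carefully exploit the orthogonalities of $\Pi^0_{\E}$ and $\boldsymbol{\Pi}^1_{\E}$ in the right order, apply mixed-exponent H\"older inequalities that are compatible with the discrete Sobolev embedding of Theorem~\ref{solobev:inclusion}, and match the continuous Sobolev embedding $H^{\gamma}\hookrightarrow L^4$ on $\curl\psi$ sharply so that the factor $\|\psi\|_{1+\gamma,\O}$ appears in place of the cruder $\|\psi\|_{2,\O}$. The $\calN_h$ estimate is technically easier once one notices the vanishing on conforming test functions, but it crucially relies on both the $H^1$- and $H^2$-approximation estimates of the enriching operator provided by Proposition~\ref{proposition-Eh} and Lemma~\ref{Lemma:A}.
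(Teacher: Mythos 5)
Your treatment of $\calN_h$ is exactly the paper's argument: test the continuous problem with $E_h\phi_h\in\calW^{\c}_h\subset\Phi$, reduce to the three differences $\nu A(\psi,\phi_h-E_h\phi_h)+B(\psi;\psi,\phi_h-E_h\phi_h)-F(\phi_h-E_h\phi_h)$, and bound them with Lemma~\ref{Lemma:A}, Lemma~\ref{cota2-B} and the approximation properties of Proposition~\ref{proposition-Eh}. For $\calC_h$ the paper gives no details at all (it only cites \cite[Lemma 4.2]{MS2021-camwa} together with Theorem~\ref{solobev:inclusion}), and your explicit decomposition via the $L^2$-orthogonality of $\PiunoKb$ against $\Pi^0_{\E}\Delta\psi\,\PiunoKb\curl\psi\in\Pp_1(\E)^2$, followed by mixed-exponent H\"older and the embeddings, is the standard way to carry that adaptation out and is sound. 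One small mismatch: with your $(4,2,4)$ H\"older split the second piece comes out as $Ch^{1+\gamma}\|\psi\|_{2+\gamma,\O}^2|\phi_h|_{2,h}$ rather than with the stated factor $\|\psi\|_{2,\O}\|\psi\|_{2+\gamma,\O}$; to reproduce the lemma's exact norms you should instead put the $L^2$ norm on $\Pi^0_{\E}\Delta\psi$ and measure the projection error $\curl\psi-\PiunoKb\curl\psi$ in $L^4(\E)$ (at the cost of half a power of $h$, which is harmless since $1/2+\gamma\geq\gamma$). This affects only which Sobolev norm of $\psi$ multiplies the estimate, not the rate $h^{\gamma}$, so the argument is correct as a proof of the convergence order.
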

\begin{proof}
Let $\phi_h \in \Vh$. Then, we can take $E_h \phi_h \in \calW^{\c}_{h} \subset \Phi$ as test function in \eqref{weak:stream:NS} to obtain
\begin{equation}\label{eq:operator}
\nu A(\psi,E_h\phi_h)+B(\psi;\psi,E_h\phi_h) = F(E_h\phi_h).
\end{equation}
Thus, from \eqref{igual:nonconf} and \eqref{eq:operator}, we get 
	\begin{equation}\label{pre-estimation}
		\begin{split}
			\calN_h(\psi;\phi_h)
			&=	\nu A(\psi,\phi_h)+B(\psi;\psi,\phi_h)- F(\phi_h-E_h\phi_h)- F(E_h\phi_h)\\
			&=	\nu A(\psi,\phi_h-E_h \phi_h)+B(\psi;\psi,\phi_h-E_h \phi_h)-F(\phi_h-E_h\phi_h).
		\end{split}
	\end{equation}
	
	By using, identity~\eqref{pre-estimation}, the Cauchy-Schwarz inequality, 
	Lemmas~\ref{cota2-B} and~\ref{Lemma:A}, we get
	\begin{equation*}
		\begin{split}
		|\calN_h(\psi;\phi_h)|
			&\leq C\nu h^{\gamma}\|\psi\|_{2+\gamma,\O}|\phi_h|_{2,h} + C|\psi|_{2+\gamma,\O}|\psi|_{2,\O} |\phi_h-E_h \phi_h|_{1,h} +C_F\|\fb\|_{0,\O}|\phi_h-E_h \phi_h|_{1,h}\\
			& \leq  Ch^{\gamma}(\|\psi\|_{2+\gamma,\O}+\|\fb\|_{0,\O})|\phi_h|_{2,h},
		\end{split}
	\end{equation*}
where $C>0$ is independent of $h$. 

The proof of second property follows by adapting the arguments used  
in \cite[Lemma 4.2]{MS2021-camwa} to the nonconforming case and using 
Theorem~\ref{solobev:inclusion}.
\end{proof}

For the  consistence error in the approximation defined in \eqref{constency:F},
we have the following result.
\begin{lemma}\label{func-bound}
	Let  $\fb \in L^2(\O)^2$,  $F(\cdot)$ and $F_h(\cdot)$ be the functionals defined
	in \eqref{loadcont} and \eqref{load-global}, respectively.
	Then, we have the following estimate:
	\begin{equation*}
		\|F-F_h\| \leq Ch\|\fb\|_{0,\O}.  
	\end{equation*}
\end{lemma}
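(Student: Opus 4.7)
The plan is to exploit the $L^2$-orthogonality defining $\PiunoKb$ to gain one order of approximation on the side of $\curl\phi_h$ (which belongs to $H^1$ elementwise), without requiring additional regularity of $\fb$.

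First, using the definition of the $L^2$-projection together with the identity $F_h^{\E}(\phi_h)=(\PiunoKb\fb,\curl\phi_h)_{0,\E}=(\fb,\PiunoKb\curl\phi_h)_{0,\E}$, I would write, for an arbitrary $\phi_h\in\Vh$,
\begin{equation*}
F(\phi_h)-F_h(\phi_h)=\sum_{\E\in\CTh}\bigl(\fb,\curl\phi_h-\PiunoKb\curl\phi_h\bigr)_{0,\E}.
\end{equation*}
Since $\PiunoKb\curl\phi_h\in\P_1(\E)^2$, the $L^2$-orthogonality of $\PiunoKb$ yields the crucial symmetrization
\begin{equation*}
F(\phi_h)-F_h(\phi_h)=\sum_{\E\in\CTh}\bigl(\fb-\PiunoKb\fb,\curl\phi_h-\PiunoKb\curl\phi_h\bigr)_{0,\E},
\end{equation*}
so that the low regularity of $\fb$ is tolerated while the approximation power is extracted entirely from $\curl\phi_h$.

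Next, I would apply Cauchy--Schwarz elementwise and bound each factor separately. For the right-hand factor, since $\phi_h|_\E\in H^2(\E)$ one has $\curl\phi_h|_\E\in H^1(\E)^2$ and the standard polynomial approximation estimate for the $L^2$-projection onto $\P_1(\E)^2$ gives
\begin{equation*}
\|\curl\phi_h-\PiunoKb\curl\phi_h\|_{0,\E}\leq Ch_\E\,|\curl\phi_h|_{1,\E}\leq Ch_\E\,|\phi_h|_{2,\E}.
\end{equation*}
For the left-hand factor, the $L^2$-stability of $\PiunoKb$ recalled in~\eqref{bound:proj} yields $\|\fb-\PiunoKb\fb\|_{0,\E}\leq (1+1)\|\fb\|_{0,\E}$.

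Combining these two bounds, summing over elements, and invoking the discrete Cauchy--Schwarz inequality, I obtain
\begin{equation*}
|F(\phi_h)-F_h(\phi_h)|\leq Ch\sum_{\E\in\CTh}\|\fb\|_{0,\E}|\phi_h|_{2,\E}\leq Ch\,\|\fb\|_{0,\O}\,|\phi_h|_{2,h}.
\end{equation*}
Dividing by $|\phi_h|_{2,h}$ and taking the supremum over $\phi_h\in\Vh\setminus\{0\}$ gives the claim. There is no genuine obstacle here; the only point that requires slight care is the double use of the projection orthogonality to transfer one power of $h$ from $\fb$ onto $\curl\phi_h$, which is indispensable since $\fb$ is only in $L^2(\O)^2$.
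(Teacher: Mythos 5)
Your argument is correct and is precisely the one the paper intends: its proof is a one-line reference to ``the definition of the functionals and the approximation properties of the projector $\PiunoKb$'', which you have simply written out in full (the double use of the $L^2$-orthogonality to shift the power of $h$ onto $\curl\phi_h$, followed by Cauchy--Schwarz, the Bramble--Hilbert estimate on star-shaped elements, and the stability bound \eqref{bound:proj}). No discrepancy with the paper's route.
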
	
\begin{proof}
	The proof follows from the definition of the
	functionals $F(\cdot)$ and $F_h(\cdot)$, together
	with approximation properties of the projector $\PiunoKb$.
\end{proof}

The following result provides the rate of convergence of our virtual element scheme in broken $H^2$-norm.
 \begin{theorem}\label{Theo-conv}
 	Let $\psi \in \Phi \cap H^{2+\gamma}(\O)$ and $\psi_{h} \in \Vh$ be the unique
 	solutions of problem~\eqref{weak:stream:NS} and problem~\eqref{NSE:stream:disc}, respectively.
 	Then, there exists a positive constant $C$, independent of $h$, such that
 	\begin{equation*}
 		|\psi- \psi_{h}|_{2,h} \leq Ch^{\gamma}(\|\psi\|_{2+\gamma,\O}+\|\fb\|_{0,\O}).
 	\end{equation*}	
 \end{theorem}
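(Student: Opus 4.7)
The plan is to invoke the abstract Strang-type result (Theorem~\ref{pre:converg}), which splits $|\psi-\psi_h|_{2,h}$ into five pieces: two approximation terms (best virtual/polynomial approximation of $\psi$ in broken $H^2$-norm), the load consistency $\|F-F_h\|$, and the two consistency errors $\calN_h(\psi;\cdot)$ and $\calC_h(\psi;\cdot)$. Each of these five quantities has already been controlled in isolation, so the proof reduces to assembling those bounds and verifying that they all deliver the same rate $h^{\gamma}$.

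Concretely, first I would apply Proposition~\ref{approx:virtual} with $t=\gamma$ and $\ell=2$ to the exact stream-function $\psi\in H^{2+\gamma}(\O)$, obtaining an interpolant $\psi_I\in\Vh$ such that $|\psi-\psi_I|_{2,h}\le C h^{\gamma}|\psi|_{2+\gamma,\O}$, which controls the first infimum. Analogously, Proposition~\ref{approx:poly} applied elementwise gives a piecewise polynomial $\chi\in\P_2(\CTh)$ with $|\psi-\chi|_{2,h}\le C h^{\gamma}|\psi|_{2+\gamma,\O}$, handling the second infimum. For the load term, Lemma~\ref{func-bound} yields $\|F-F_h\|\le C h\|\fb\|_{0,\O}\le C h^{\gamma}\|\fb\|_{0,\O}$, using $\gamma\le 1$ and the boundedness of $h$.

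The two consistency errors are handled directly by Lemma~\ref{consist:term}, which already provides $|\calN_h(\psi;\phi_h)|\le C h^{\gamma}(\|\psi\|_{2+\gamma,\O}+\|\fb\|_{0,\O})|\phi_h|_{2,h}$ and $|\calC_h(\psi;\phi_h)|\le C h^{\gamma}\|\psi\|_{2+\gamma,\O}(\|\psi\|_{1+\gamma,\O}+\|\psi\|_{2,\O})|\phi_h|_{2,h}$. The product of norms appearing in the $\calC_h$-bound is harmless: since $\|\psi\|_{2,\O}$ is controlled by $\nu^{-1}\|\fb\|_{0,\O}$ via the a priori bound from Theorem on stream-function well-posedness, and $\|\psi\|_{1+\gamma,\O}\le C\|\psi\|_{2+\gamma,\O}$, the factor $(\|\psi\|_{1+\gamma,\O}+\|\psi\|_{2,\O})$ can be absorbed into a constant depending on the data and regularity.

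Substituting these four bounds into the abstract estimate of Theorem~\ref{pre:converg} and collecting the common rate $h^{\gamma}$ immediately gives the desired inequality. There is really no main obstacle: all the technical work (enriching operator, discrete Sobolev embedding used in $\calC_h$, skew-symmetry used for the abstract estimate, and approximation properties of $\PiunoKb$ for the load) has been carried out in the preceding lemmas. The only thing to be careful about is that the consistency bound for $\calC_h$ depends on $\|\psi\|_{2+\gamma,\O}$ and regularity-dependent quantities, so one must present the final constant as depending on $\psi$ through $\|\psi\|_{2+\gamma,\O}$ (equivalently, through $\|\fb\|_{0,\O}$ via Theorem~\ref{regu-aditional}), exactly as stated in the theorem.
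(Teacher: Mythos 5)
Your proposal is correct and follows exactly the paper's own argument: the paper proves Theorem~\ref{Theo-conv} by combining Theorem~\ref{pre:converg} with Propositions~\ref{approx:poly} and~\ref{approx:virtual} and Lemmas~\ref{consist:term} and~\ref{func-bound}, which is precisely the assembly you describe. Your additional remarks on absorbing the factor $(\|\psi\|_{1+\gamma,\O}+\|\psi\|_{2,\O})$ into the constant and on using $\gamma\le 1$ for the load term are fine and consistent with the stated result.
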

 \begin{proof}
 The demonstration follows from Theorem~\ref{pre:converg}, Propositions~\ref{approx:poly} and 
 \ref{approx:virtual}, together with Lemmas~\ref{consist:term}  and \ref{func-bound}.
 \end{proof}

\subsection{Error estimates in $H^1$ and $L^2$}
In this section we provide new optimal error estimates in  broken $H^1$- and $L^2$-norms for the
stream-function  by using duality arguments and employing the enriching operator $E_h$,
under same regularity of the weak solution $\psi$ and of the density force $\fb$, 
considered in Theorem~\ref{Theo-conv}.

We start establishing the following key preliminary result involving the 
forms $B(\cdot;\cdot,\cdot)$  and $B_h(\cdot;\cdot,\cdot)$, which will useful 
to provide the error estimates in the weak norms.
This term will take care of the consistency error associate to the trilinear 
form present in the VEM approach and as we will observe, its manipulation is not direct, so it 
will require special attention due to the nonlinearity involved.
\begin{lemma}\label{LemmaTB3}
	Let $\psi \in \Phi \cap H^{2+\gamma}(\O)$ and $\psi_h \in \Vh$
	be the unique solutions of problems \eqref{weak:stream:NS} and 
	\eqref{NSE:stream:disc}, respectively. Assuming that $\fb \in L^2(\O)^2$ and
	let $\varphi \in H^{2+t}(\O)$, with $t \in  (1/2,1]$. Then, it holds
	\begin{equation*}
		\begin{split}
			T_B(\varphi):=B_h(\psi_h;\psi_h,\varphi)-B(\psi_h;\psi_{h},\varphi) 
			&\leq C\left(h^{\gamma+t}+h^{2\gamma}\right)
			\left(\|\fb\|_{0,\O}+\|\psi\|_{2+\gamma,\Omega} \right) \|\varphi\|_{2+t,\O}\\
			& \quad + 2\CR \widetilde{C}^2_{{\rm sob}}\CBD^2\|\fb\|_{0,\O}|\psi-\psi_h|_{1,h}\|\varphi\|_{1+t,\O},
		\end{split}
	\end{equation*}
	where $C>0$ is a constant independent of $h$, and $\widetilde{C}_{{\rm sob}}$, $\CR$ and $\CBD$ are the constants 
	in ~\eqref{sob:cont}, Theorem~\ref{regu-aditional} and~\eqref{bound:proj}, respectively.
\end{lemma}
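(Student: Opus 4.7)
The plan is to split $T_B(\varphi)$ by introducing $B_h(\psi;\psi,\varphi)$ and $B(\psi;\psi,\varphi)$:
\begin{equation*}
T_B(\varphi) = \underbrace{[B_h(\psi_h;\psi_h,\varphi)-B_h(\psi;\psi,\varphi)]}_{I_1} + \underbrace{[B_h(\psi;\psi,\varphi)-B(\psi;\psi,\varphi)]}_{I_2} + \underbrace{[B(\psi;\psi,\varphi)-B(\psi_h;\psi_h,\varphi)]}_{I_3}.
\end{equation*}
The middle term $I_2$ is a pure consistency error of the discrete nonlinear form at the smooth function $\psi$; the pair $I_1+I_3$ will be reorganized by trilinearity into consistency errors carrying the discrete error $\psi-\psi_h$.

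For $I_2$ I would work element-wise and exploit the orthogonality of the $L^2$-projections: since $((I-\PimoK)f,g)_{0,\E}=0$ for $g\in\P_0(\E)$ and the analogous relation holds for $\PiunoKb$ and $\P_1(\E)^2$, one can telescope $B_h^{\E}(\psi;\psi,\varphi)-B^{\E}(\psi;\psi,\varphi)$ into a sum whose summands each contain at least one factor of the form $(I-\PimoK)\Delta\psi$, $(I-\PiunoKb)\curl\psi$ or $(I-\PiunoKb)\nabla\varphi$. The $L^2$-approximation bounds $\|(I-\PimoK)\Delta\psi\|_{0,\E}\leq C h_\E^{\gamma}|\psi|_{2+\gamma,\E}$, $\|(I-\PiunoKb)\curl\psi\|_{0,\E}\leq C h_\E^{1+\gamma}|\psi|_{2+\gamma,\E}$ and $\|(I-\PiunoKb)\nabla\varphi\|_{0,\E}\leq C h_\E^{1+t}|\varphi|_{2+t,\E}$, combined with H\"older's inequality with exponents $(2,4,4)$ and the continuous Sobolev embedding $H^1\hookrightarrow L^4$, followed by the regularity estimate $\|\psi\|_{2+\gamma,\O}\leq\CR\|\fb\|_{0,\O}$ from Theorem~\ref{regu-aditional}, yield a contribution of order $Ch^{\gamma+t}(\|\fb\|_{0,\O}+\|\psi\|_{2+\gamma,\O})\|\varphi\|_{2+t,\O}$.

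For the cross piece, trilinearity gives
\begin{equation*}
I_1+I_3 = (B_h-B)(\psi_h;\psi_h-\psi,\varphi) + (B_h-B)(\psi_h-\psi;\psi,\varphi).
\end{equation*}
The first piece is handled by the same projection-telescoping after splitting $\psi_h=\psi+(\psi_h-\psi)$: a projection-error factor on $\psi$ (rate $h^{\gamma}$) is multiplied by $|\psi-\psi_h|_{2,h}$ (also rate $h^{\gamma}$ by Theorem~\ref{Theo-conv}), producing the $h^{2\gamma}$ contribution with the prefactor $(\|\fb\|_{0,\O}+\|\psi\|_{2+\gamma,\O})$ via \eqref{depend:disc}. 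The second piece is the delicate one: we arrange the decomposition so that the two-derivative slot is occupied by $\psi$ (bounded in $L^2$ by $\|\psi\|_{2,\O}\leq\CR\|\fb\|_{0,\O}$), while the error $\psi-\psi_h$ enters only through $\curl(\psi-\psi_h)$, whose $L^2$-norm equals $|\psi-\psi_h|_{1,h}$. The remaining projected factors of $\curl(\psi-\psi_h)$ and $\nabla\varphi$ are estimated in $L^4$ using the $L^4$-stability of the projection (constant $\CBD$, cf.\ \eqref{bound:proj}) and the Sobolev embedding $H^t\hookrightarrow L^4$ valid for $t>1/2$ applied to $\nabla\varphi\in H^{1+t}$ (constant $\widetilde{C}_{{\rm sob}}$); this yields exactly the term $2\CR\widetilde{C}_{{\rm sob}}^{2}\CBD^{2}\|\fb\|_{0,\O}|\psi-\psi_h|_{1,h}\|\varphi\|_{1+t,\O}$.

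The main obstacle is the last step: a naive bound of $(B_h-B)(\psi_h-\psi;\psi,\varphi)$ would involve $|\psi-\psi_h|_{2,h}$, which would preclude the use of this lemma in the Aubin--Nitsche duality argument needed later to derive $H^1$- and $L^2$-error estimates. Keeping only $|\psi-\psi_h|_{1,h}$ requires a careful choice of telescoping so that $(I-\PiunoKb)$ acts on the smooth test function $\varphi$ (controllable via $L^4$-stability and Sobolev embedding), while the two-derivative factor is kept on $\psi$ by using the orthogonality relations. This bookkeeping explains the precise product of constants $\CR\,\widetilde{C}_{{\rm sob}}^{2}\,\CBD^{2}$ and the appearance of the weaker norm $\|\varphi\|_{1+t,\O}$ in the final estimate.
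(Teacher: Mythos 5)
Your overall strategy --- add and subtract projected and exact quantities, telescope so that every summand carries at least one projection-error factor, then combine orthogonality of the $L^2$-projections, the $L^4$-stability constant $\CBD$, Sobolev embeddings and the a priori $H^2$ estimate --- is the same technique the paper uses (the paper telescopes $B_h(\psi_h;\psi_h,\varphi)-B(\psi_h;\psi_h,\varphi)$ directly into seven terms $T_1,\dots,T_7$ rather than passing through your three-way split, but the two organizations are algebraically equivalent). However, your bookkeeping of the cross terms is wrong in a way that matters here. In your identity $I_1+I_3=(B_h-B)(\psi_h;\psi_h-\psi,\varphi)+(B_h-B)(\psi_h-\psi;\psi,\varphi)$, the \emph{second} piece places the error $\psi_h-\psi$ in the first argument of $B$, which is the Laplacian slot: every telescoped summand of $(B_h-B)(\psi_h-\psi;\psi,\varphi)$ carries $\Delta(\psi_h-\psi)$ or $\PimoK\Delta(\psi_h-\psi)$, controlled by $|\psi-\psi_h|_{2,h}=O(h^{\gamma})$, times a projection error on $\curl\psi$ or $\nabla\varphi$; it therefore contributes only to the $C(h^{\gamma+t}+h^{2\gamma})$ part and cannot produce the term $2\CR\widetilde{C}^2_{{\rm sob}}\CBD^2\|\fb\|_{0,\O}|\psi-\psi_h|_{1,h}\|\varphi\|_{1+t,\O}$, contrary to what you claim. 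That factor $|\psi-\psi_h|_{1,h}$, with the explicit constant $2$, actually originates in your \emph{first} cross piece, from the summand containing $(I-\PiunoKb)\curl(\psi_h-\psi)$, which is bounded in $L^2$ only by $2|\psi-\psi_h|_{1,h}$ (triangle inequality plus $L^2$-stability of $\PiunoKb$); this is the analogue of the paper's term $T_5=(\PimoK\Delta\psi\,(\curl\psi_h-\PiunoKb\curl\psi_h),\nabla\varphi)_{0,\E}$. Since you assert the first piece is purely $O(h^{2\gamma})$ and attribute the delicate term to the second piece, the proof as written does not establish the stated bound.

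A second, related gap: in that first cross piece the Laplacian slot carries $\psi_h$, so before invoking $L^4$-stability you must additionally insert $\PimoK\Delta\psi$; the chain $\|\PimoK\Delta\psi\|_{L^4(\E)}\leq\CBD\|\Delta\psi\|_{L^4(\E)}$, $\Delta\psi\in H^{\gamma}(\O)\hookrightarrow L^4(\O)$ and $\|\psi\|_{2+\gamma,\O}\leq\CR\|\fb\|_{0,\O}$ is what generates the product of constants $\CR\widetilde{C}^2_{{\rm sob}}\CBD^2$. You cannot run this argument on $\PimoK\Delta\psi_h$ directly, since for a virtual function $\Delta\psi_h$ is only in $L^2(\E)$ and the identity $\|c\|_{L^4(\E)}=|\E|^{-1/4}\|c\|_{0,\E}$ for constants degenerates as $h\to 0$. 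Once the critical summand is located in the correct piece and $\PimoK\Delta\psi$ is inserted there, your plan collapses onto the paper's seven-term estimate.
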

{\begin{proof}
		By using the definition of trilinear forms $B(\cdot;\cdot,\cdot)$  and $B_h(\cdot;\cdot,\cdot)$,
		adding and subtracting suitable terms and using the orthogonality property of the $L^2$-projections,
		 we have the following identity
		\begin{equation*}
			\begin{split}
				T_B(\varphi)&= \sum_{\E \in  \CT_h}  ((\Delta \psi_h-\PimoK \Delta \psi_h)(\curl \psi_h-\curl \psi), \nabla \varphi)_{0,\E}
				+ (\PimoK(\Delta \psi_h-\Delta \psi)(\curl \psi_h-\PiunoKb \curl\psi_h), \nabla \varphi)_{0,\E} \\
				& \qquad +  (\PimoK(\Delta(\psi_h-\psi))\PiunoKb\curl \psi_h, 
				\nabla \varphi- \PiunoKb \nabla \varphi)_{0,\E} 
				+ (\PimoK\Delta \psi\PiunoKb (\curl(\psi_h-\psi)), \nabla \varphi- \PiunoKb \nabla \varphi)_{0,\E}\\
				& \qquad + (\PimoK\Delta \psi(\curl \psi_h-\PiunoKb\curl \psi_h), \nabla \varphi)_{0,\E}
				+((\Delta \psi_h-\PimoK \Delta \psi_h)\curl \psi, \nabla \varphi)_{0,\E} \\
				& \qquad  +  (\PimoK\Delta \psi\:\PiunoKb \curl\psi , \nabla \varphi- \PiunoKb \nabla \varphi)_{0,\E}\\
				&=: T_1 + T_2 +T_3+T_4+T_5+T_6+T_7.
			\end{split}	
		\end{equation*}
		In what follows, we will establish estimates  for each terms on the right hand side of the previous identity. 
		For the term $T_1$ we use the H\"older and triangle inequalities,
		along with approximations properties of $\PimoK$, to obtain
		\begin{equation*}
			\begin{split}
			T_1	& \leq  \sum_{\E \in  \CT_h} \|\Delta \psi_h-\PimoK \Delta \psi_h\|_{0,\E} \|\curl \psi_h-\curl \psi\|_{L^4(\E)} \|\nabla \varphi\|_{L^4(\E)} \\
				& \leq  \sum_{\E \in  \CT_h} (2\|\Delta \psi_h-\Delta \psi\|_{0,\E}+\|\Delta \psi-\PimoK \Delta \psi\|_{0,\E}) \|\curl (\psi_h- \psi)\|_{L^4(\E)} \|\nabla \varphi\|_{L^4(\E)} \\	
				&\leq C(|\psi-\psi_h|_{2,h}+h^{\gamma}\|\psi\|_{2+\gamma,\O})|\psi- \psi_h|_{1,4,h} \|\nabla \varphi\|_{L^4(\O)}\\
				&\leq  Ch^{2\gamma} (\|\fb\|_{0,\O}+ \|\psi\|_{2+\gamma,\O}) \| \varphi\|_{2+t,\O},
			\end{split}
		\end{equation*}
		where we have used the H\"older inequality (for sequences), continuous Sobolev inclusion, along with Theorems~\ref{solobev:inclusion} and~\ref{Theo-conv}.
		
		Now, for $T_2$ we follow similar arguments to obtain 
		\begin{equation*}
			\begin{split}
				T_2 \leq Ch^{2\gamma}(\|\fb\|_{0,\O}+ \|\psi\|_{2+\gamma,\O}) \| \varphi\|_{2+t,\O}.
			\end{split}
		\end{equation*}  
		For the term $T_3$ we employ again the H\"older inequality, the continuity of the projector $\PiunoKb$, along with Theorems~\ref{solobev:inclusion} and~\ref{Theo-conv}, to obtain:
		\begin{equation*}
			\begin{split}
				T_3 & \leq  \sum_{\E \in  \CT_h} \|\PimoK (\Delta \psi_h-\Delta \psi)\|_{0,\E} \|\PiunoKb\curl \psi_h\|_{L^4(\E)} \|\nabla \varphi-\PiunoKb \nabla \varphi\|_{L^4(\E)} \\
				& \leq C|\psi-\psi_h|_{2,h} \|\curl \psi_h\|_{1,4,h}h^{t} |\nabla \varphi|_{W^{t}_4(\O)} \\
				&\leq C h^{\gamma+t}(\|\fb\|_{0,\O}+ \|\psi\|_{2+\gamma,\O}) \| \varphi\|_{2+t,\O}.
			\end{split}
		\end{equation*} 
		For the term $T_4$, we follow similar steps to those used above, to get  
		\begin{equation*}
			\begin{split}
				T_4 & \leq  \sum_{\E \in  \CT_h} \|\PimoK \Delta \psi\|_{0,\E} \|\PiunoKb\curl (\psi_h-\psi)\|_{L^4(\E)} \|\nabla \varphi-\PiunoKb \nabla \varphi\|_{L^4(\E)} \\
				&\leq C h^{\gamma+t}(\|\fb\|_{0,\O}+ \|\psi\|_{2+\gamma,\O}) \| \varphi\|_{2+t,\O}.
			\end{split}
		\end{equation*}  
		Now, for the term $T_5$, we add and subtract suitable terms, use the  H\"older inequality, properties of the $L^2$-projections $\PiunoKb$ and $\PimoK$, together with continuous Sobolev embeddings to obtain 
		\begin{equation*}
			\begin{split}
				T_5 & \leq \sum_{\E \in  \CT_h} \|\PimoK\Delta \psi\|_{L^4(\E)}\|\curl \psi_h-\PiunoKb\curl \psi_h\|_{0,\E} \|\nabla \varphi \|_{L^4(\E)}\\
				& \leq \left( 2|\psi-\psi_h|_{1,h} +Ch^{1+\gamma}\|\psi\|_{2+\gamma,\O}\right) (\CBD \|\Delta \psi\|_{L^4(\O)} \CBD\|\nabla \varphi\|_{L^4(\O)})\\
				&\leq   2\CR \widetilde{C}^2_{{\rm sob}}\CBD^2 \|\fb\|_{0,\O} |\psi-\psi_h|_{1,h} \| \varphi\|_{1+t,\O} 
				+Ch^{\gamma+t}\|\psi\|_{2+\gamma,\O} \| \varphi\|_{2+t,\O},\\	
			\end{split}	
		\end{equation*}
		
Repeating the same arguments, we obtain the following bounds for the terms $T_6$ and $T_7$:
	\begin{equation}	
			T_6+T_7\leq C h^{\gamma+t}(\|\fb\|_{0,\O}+ \|\psi\|_{2+\gamma,\O}) \| \varphi\|_{2+t,\O}. 
		\end{equation}
Finally, by combining the above bounds  we obtain the desired result.
\end{proof}}

Moreover, for the bilinear form $A(\cdot,\cdot)$ we have the following auxiliary result~\cite[Lemma 4.11]{AMS2022-CI2MA}. 
\begin{lemma}\label{Theorem:AD2}
	For $\varphi \in H^{2+t}(\O)$ and $ \phi \in \Phi \cap  H^{2+t}(\O)$,  with $t \in [0,1]$, it holds:
	\begin{equation*}
		A(\varphi,\phi-\phi_I) \leq Ch^{2t}\|\varphi\|_{2+t,\O}\|\phi\|_{2+t,\O},
	\end{equation*}
	where $\phi_I \in \Vh$ is the interpolant of $\phi$ in the virtual space $\Vh$ (cf. Proposition~\ref{approx:virtual}). 
\end{lemma}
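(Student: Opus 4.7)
The plan is to gain an extra factor of $h^t$ beyond a naive interpolation estimate by exploiting a polynomial orthogonality of the interpolant. First I would localize, writing $A(\varphi,\phi-\phi_I)=\sum_{\E\in\CT_h}A^{\E}(\varphi,\phi-\phi_I)$, and on each element $\E$ introduce a polynomial $\varphi_\pi\in\P_2(\E)$ approximating $\varphi$ as in Proposition~\ref{approx:poly}. The point is then to show that
\[
A^{\E}(\varphi_\pi,\phi-\phi_I)=0\qquad\forall\,\E\in\CT_h,
\]
so that $A^{\E}(\varphi,\phi-\phi_I)=A^{\E}(\varphi-\varphi_\pi,\phi-\phi_I)$.

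To establish this orthogonality I would perform the biharmonic integration by parts on $\E$. Since $\varphi_\pi\in\P_2(\E)$, one has $\Delta^2\varphi_\pi=0$ and $\nabla(\Delta\varphi_\pi)=\mathbf{0}$, so only the boundary terms survive:
\[
A^{\E}(\varphi_\pi,\phi-\phi_I)=\int_{\partial\E}M_{nn}(\varphi_\pi)\,\partial_{\nb_\E}(\phi-\phi_I)\,\ds+\int_{\partial\E}M_{n\tb}(\varphi_\pi)\,\partial_{\tb_\E}(\phi-\phi_I)\,\ds,
\]
where $M_{nn}(u):=\nb^{\top}\bD^2 u\,\nb$ and $M_{n\tb}(u):=\tb^{\top}\bD^2 u\,\nb$. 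Crucially, $\bD^2\varphi_\pi$ is a constant matrix, hence $M_{nn}(\varphi_\pi)$ and $M_{n\tb}(\varphi_\pi)$ are constant on every edge $e\in\EE_h^{\E}$. Therefore the first sum reduces to $\sum_{e}M_{nn}(\varphi_\pi)|_e\,(\partial_{\nb_\E}(\phi-\phi_I),1)_{0,e}$, which vanishes because the edge normal-derivative moments belong to the DOF set $\DVd$ of $\VK$; the second sum, after one-dimensional IBP along each edge, reduces to $\sum_{e}M_{n\tb}(\varphi_\pi)|_e\,[(\phi-\phi_I)(\bv)]_{\partial e}$, which vanishes because $\phi_I$ matches $\phi$ at every vertex by the DOF set $\DVu$.

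With the orthogonality in hand, the remainder is routine: Cauchy--Schwarz gives
\[
A^{\E}(\varphi,\phi-\phi_I)\le |\varphi-\varphi_\pi|_{2,\E}\,|\phi-\phi_I|_{2,\E},
\]
and Propositions~\ref{approx:poly} and~\ref{approx:virtual} with $\ell=2$ yield $|\varphi-\varphi_\pi|_{2,\E}\le C h_{\E}^{t}|\varphi|_{2+t,\E}$ and $|\phi-\phi_I|_{2,\E}\le C h_{\E}^{t}|\phi|_{2+t,\E}$. Therefore
\[
A^{\E}(\varphi,\phi-\phi_I)\le C h_{\E}^{2t}|\varphi|_{2+t,\E}|\phi|_{2+t,\E},
\]
and summing over $\E$ followed by the discrete Cauchy--Schwarz inequality produces the asserted bound $C h^{2t}\|\varphi\|_{2+t,\O}\|\phi\|_{2+t,\O}$.

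The main obstacle is the careful biharmonic IBP on a general polygon with the Morley-type (nonconforming) interpolant: one must identify the correct boundary data for $\varphi_\pi$, split tangential and normal contributions on $\partial\E$, and match each piece against precisely one of the two DOF sets $\DVu$, $\DVd$. Any corner/tangential derivative term that does not immediately cancel against a DOF would break the argument, so this matching is where all the care needs to be placed; once it is done, the standard approximation estimates finish the proof.
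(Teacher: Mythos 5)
Your argument is correct and is the standard one for this estimate; the paper itself gives no proof but defers to \cite[Lemma 4.11]{AMS2022-CI2MA}, where essentially this computation (insert $\varphi_\pi\in\P_2(\E)$, integrate by parts using that $\bD^2\varphi_\pi$ is constant, and cancel the edge normal-moment and vertex-value terms against the two DOF sets) is carried out. The only point worth making explicit is that the orthogonality $A^{\E}(\varphi_\pi,\phi-\phi_I)=0$ hinges on $\phi_I$ being the degrees-of-freedom interpolant, i.e.\ sharing the values $\DVu$ and the edge normal moments $\DVd$ with $\phi$; this is indeed the interpolant intended in Proposition~\ref{approx:virtual}, and both quantities are well defined for $\phi\in H^{2}(\O)$ in two dimensions, so your proof is complete.
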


In order to establish the desired error estimates we consider the following assumption: 
\begin{equation}\label{assup:additional}
2\CR \widetilde{C}^2_{{\rm sob}}\CBD^2\|\fb\|_{0,\O} < 1,
\end{equation} 
where $\widetilde{C}_{{\rm sob}}$, and $\CR$ and $\CBD$ are the constants 
in  \eqref{sob:cont}, Theorem~\ref{regu-aditional} and \eqref{bound:proj}, respectively.

The next theorem establish the main result of this subsection.
\begin{theorem}\label{Theo:conv:H1:L2}
	Let $\psi \in \Phi \cap H^{2+\gamma}(\O)$ and $\psi_{h} \in \Vh$ be the unique solutions of 
	problems~\eqref{weak:stream:NS} and \eqref{NSE:stream:disc}, respectively.
	Then, under assumption \eqref{assup:additional} there exists 
	a positive constant $C$, independent of $h$, such that	
	\begin{equation}\label{L2:H1:norms}
		\|\psi- \psi_{h}\|_{0,\O}+|\psi- \psi_{h}|_{1,h} \leq Ch^{2\gamma}(\|\psi\|_{2+\gamma,\O}+\|\fb\|_{0,\O}).
	\end{equation}	
\end{theorem}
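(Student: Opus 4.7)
The plan is to deploy an Aubin--Nitsche duality argument for both $\|e_h\|_{0,\Omega}$ and $|e_h|_{1,h}$, where $e_h:=\psi-\psi_h$. First, I would introduce the linearized adjoint problem at $\psi$: given $g\in\Phi^{\ast}$, find $\varphi\in\Phi$ with
\[
\nu A(\phi,\varphi)+B(\phi;\psi,\varphi)+B(\psi;\phi,\varphi)=\langle g,\phi\rangle \qquad \forall\phi\in\Phi.
\]
The smallness condition \eqref{assup:additional} ensures that the linearized operator above is coercive on $\Phi$, and an adaptation of Theorem~\ref{regu-aditional} to the adjoint system delivers $\varphi\in H^{2+\gamma}(\Omega)$ with $\|\varphi\|_{2+\gamma,\Omega}\leq C\|g\|_{\Phi^{\ast}}$. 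For the $L^2$-bound I would choose $\langle g,\phi\rangle:=(\phi,e_h)_{0,\Omega}$, whereas for the broken $H^1$-bound I would represent $|e_h|_{1,h}^{2}$ as a bounded functional of $e_h$ through its piecewise gradient, lifted by the enriching operator $E_h$ so that duality pairs naturally against $\varphi\in\Phi$.

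Next I would test the dual equation with $\phi=e_h\in\Phi+\Vh$ and decompose the smooth factor as $\varphi=(\varphi-\varphi_{I})+\varphi_{I}$ where $\varphi_{I}\in\Vh$ is the virtual interpolant from Proposition~\ref{approx:virtual}. Substituting the discrete scheme \eqref{NSE:stream:disc} and the continuous formulation \eqref{weak:stream:NS} expresses $\langle g,e_h\rangle$ as the sum of four contributions: (i) bilinear approximation, governed by Lemma~\ref{Lemma:A} and Lemma~\ref{Theorem:AD2}; (ii) the bilinear consistency between $A$ and $A_h$, handled exactly as in Lemma~\ref{consist:term}; (iii) the loading consistency $F-F_h$, bounded by Lemma~\ref{func-bound}; and (iv) the trilinear consistency $B_h(\psi_h;\psi_h,\varphi_I)-B(\psi_h;\psi_h,\varphi)$, provided by Lemma~\ref{LemmaTB3}.

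The main obstacle is item (iv). With $t=\gamma\in(1/2,1]$, Lemma~\ref{LemmaTB3} gives a leading contribution of order $(h^{2\gamma}+h^{2\gamma})\|\varphi\|_{2+\gamma,\Omega}$, which has precisely the target rate $h^{2\gamma}$ after invoking the regularity of $\varphi$, plus an awkward residue $2\CR\widetilde{C}_{\mathrm{sob}}^{2}\CBD^{2}\|\fb\|_{0,\Omega}\,|e_h|_{1,h}\,\|\varphi\|_{1+\gamma,\Omega}$. In the $H^1$-duality, the dual regularity yields $\|\varphi\|_{1+\gamma,\Omega}\leq C|e_h|_{1,h}$, turning the residue into $2\CR\widetilde{C}_{\mathrm{sob}}^{2}\CBD^{2}\|\fb\|_{0,\Omega}|e_h|_{1,h}^{2}$: this is exactly the term that the smallness assumption~\eqref{assup:additional} is designed to absorb into the left-hand side, which is the key point of the whole argument. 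In the subsequent $L^2$-duality, the same residue carries instead the factor $\|\varphi\|_{1+\gamma,\Omega}\leq C\|e_h\|_{0,\Omega}$, and the $H^1$-rate already established controls $|e_h|_{1,h}$ at order $h^{2\gamma}$, so no absorption is needed.

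Collecting every contribution, applying Theorem~\ref{Theo-conv} wherever $|e_h|_{2,h}$ appears, and taking the supremum over admissible dual data in the two separate dualities, I arrive first at $|e_h|_{1,h}\leq Ch^{2\gamma}(\|\psi\|_{2+\gamma,\Omega}+\|\fb\|_{0,\Omega})$ and then at $\|e_h\|_{0,\Omega}\leq Ch^{2\gamma}(\|\psi\|_{2+\gamma,\Omega}+\|\fb\|_{0,\Omega})$, which combined give \eqref{L2:H1:norms}.
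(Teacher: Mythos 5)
Your plan is essentially the paper's argument for the broken $H^1$ estimate: the same linearized adjoint problem $\nu A(\varphi,\phi)+B(\psi;\varphi,\phi)+B(\varphi;\psi,\phi)=\langle g,\varphi\rangle$ with $H^{2+\gamma}$ dual regularity, the same four families of consistency terms handled by Lemmas~\ref{Lemma:A}, \ref{Theorem:AD2}, \ref{consist:term}, \ref{func-bound} and~\ref{LemmaTB3}, and the same identification of the residue $2\CR\widetilde{C}^2_{\rm sob}\CBD^2\|\fb\|_{0,\O}|\psi-\psi_h|_{1,h}\|\varphi\|_{1+\gamma,\O}$ as the term that assumption~\eqref{assup:additional} absorbs. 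Two points where the paper is more careful or more economical than your sketch. First, you cannot literally ``test the dual equation with $\phi=e_h\in\Phi+\Vh$'': the dual identity only holds against conforming test functions. The paper resolves this by setting $\delta_h:=\psi_h-\psi_I$, choosing the dual datum $\langle g,\varphi\rangle=(\nabla E_h\delta_h,\nabla\varphi)_{0,\O}$, and testing with $\varphi=E_h\delta_h\in\calW^{\c}_h\subset\Phi$; the nonconforming pieces $\psi-\psi_I$ and $\delta_h-E_h\delta_h$ are then disposed of by the triangle inequality and Proposition~\ref{proposition-Eh} before the duality even starts. Your phrase ``lifted by the enriching operator'' gestures at this, but it is the precise mechanism that makes the argument rigorous, and it also feeds the splitting $I_1+I_2$ and the negative-norm treatment of $B(E_h\delta_h-\delta_h;\psi,\phi)$. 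Second, the paper does not run a second duality for the $L^2$ bound: since the claimed $L^2$ rate is the same $h^{2\gamma}$ as the $H^1$ rate, it follows directly from the triangle inequality, the interpolation and enriching estimates, and the Poincar\'e-type bound $\|E_h\delta_h\|_{0,\O}\le C|E_h\delta_h|_{1,\O}$. Your proposed $L^2$ duality is not wrong, but it is unnecessary work and would not improve the stated rate, which is capped by the consistency errors rather than by the dual regularity.
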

{\begin{proof}
First we will prove the $H^1$ estimate in \eqref{L2:H1:norms}. To this propose,  
let $\psi_I\in\Vh$ be the interpolant of $\psi$ such that Proposition~\ref{approx:virtual}
holds true. We set $\delta_h:=(\psi_h-\psi_I) \in\Vh$. Then, we write
\begin{equation*}
\psi_h- \psi = (\psi_h-\psi_I) + (\psi_I-\psi)=(\psi_I-\psi) + (\delta_h-E_h \delta_h)+E_h \delta_h.
\end{equation*}
Thus, by using the triangle inequality together with Remark~\ref{remark:boundB2}, Proposition~\ref{approx:virtual}, Lemma \ref{proposition-Eh} and Theorem \ref{Theo-conv}, we obtain  
\begin{equation}\label{trian00}
\begin{split}
\vert\psi-\psi_h\vert_{1,h}&\leq \vert\psi-\psi_I\vert_{1,h}
+\vert \delta_h-E_h \delta_h\vert_{1,h} +\vert E_h\delta_h\vert_{1,h}
\leq Ch^{2\gamma}\|\psi\|_{2+s,\O}  + \|\nabla E_h \delta_h\|_{0,\O}.
\end{split}	
\end{equation}			
Now, the goal is to estimate the term $\|\nabla E_h \delta_h\|_{0,\O}$. 
To do that,	we consider the following dual problem: given $\psi \in \Phi$ 
(the unique solution of the formulation~\eqref{weak:stream:NS}), 
find $\phi \in \Phi$, such that
\begin{equation}\label{dual:problem}
\calA^{DP}(\psi; \varphi, \phi):=	\nu A (\varphi,\phi)+ B(\psi;\varphi,\phi)+ B(\varphi;\psi,\phi)
=  (\nabla(E_h \delta_h),  \nabla \varphi)_{0,\O}  \qquad \forall \varphi \in \Phi,
\end{equation}
where $A(\cdot, \cdot) $ and $B(\cdot;\cdot,\cdot)$ are the  continuous forms defined 
in \eqref{A-cont} and \eqref{B-cont}, respectively. 
Following the same arguments in \cite{KPK-CMAME2021} we have that 
problem~\eqref{dual:problem} is well-posed and from Theorem~\ref{regu-aditional}, 
we obtain that $\phi \in \Phi \cap H^{2+\gamma}(\O)$  and
\begin{equation}\label{reg:axi}
\|\phi\|_{2+\gamma,\O} \leq  C  \|\nabla E_h \delta_h\|_{0,\O},
\end{equation}
where  $C>0$ is a constant independent of $h$.  Taking $\varphi=E_h \delta_h  \in \calW^{\c}_h \subset \Phi$ as test function, 
adding and subtracting $\delta_h$ in problem~\eqref{dual:problem}, we get
\begin{equation}\label{trian01}
\begin{split}
\|\nabla E_h \delta_h\|^2_{0,\O} = \calA^{DP}(\psi; E_h \delta_h, \phi) =
\calA^{DP}(\psi; E_h \delta_h- \delta_h, \phi) +\calA^{DP}(\psi;\delta_h, \phi)
=:I_1+I_2.  
\end{split}
\end{equation} 

Now, we will obtain bounds for the terms $I_1$ and $I_2$ in the above identity. 
For $I_1$, 	we apply Lemma~\ref{Lemma:A} and Proposition~\ref{approx:virtual} to obtain 
\begin{equation}\label{pre:I1}
\begin{split}
I_1 &:= \calA^{DP}(\psi; E_h \delta_h- \delta_h, \phi)\\
&=\nu A(E_h \delta_h-\delta_h,\phi)+B( \psi; E_h \delta_h-\delta_h,\phi)+ B(E_h \delta_h-\delta_h;\psi,\phi)\\	
& \leq C\nu h^{\gamma}|\delta_h|_{2,h}\|\phi\|_{2+\gamma,\O}+ C\|\psi\|_{2+\gamma,\O} 
|E_h \delta_h-\delta_h|_{1,h}\|\phi\|_{2,\O} + B(E_h \delta_h-\delta_h;\psi,\phi)\\
& \leq C\nu h^{2\gamma}\|\psi\|_{2+\gamma,\O}\|\phi\|_{2+s,\O}+ Ch^{2\gamma}\|\psi\|_{2+\gamma,\O} \|\phi\|_{2,\O} + B(E_h \delta_h-\delta_h;\psi,\phi).
\end{split}
\end{equation}
To estimate the  term  $B(E_h \delta_h-\delta_h;\psi,\phi)$ we start recalling that 
 $\psi, \phi \in H^{2+\gamma}(\O)$, with $\gamma \in (1/2,1]$, then by using the Sobolev
inclusion $H^{2+\gamma}(\O) \hookrightarrow W^{1,4}(\O)$, we have
\begin{equation*}
\begin{split}
|\curl \psi \cdot \nabla \phi|_{1,\O} &\leq \|\curl \psi\|_{1,4,\O} \|\nabla \phi\|_{1,4,\O}
\leq \CS^2\|\psi\|_{2+\gamma,\O} \| \phi\|_{2+\gamma,\O} < + \infty. 
\end{split}
\end{equation*}
Therefore, $\curl \psi \cdot \nabla \phi \in H^{1}(\O)$ (hence belongs to $H^{1}(\E)$ for each $\E \in \CT_h$).
Thus, by using the definition of $B(\cdot;\cdot,\cdot)$ we have 
\begin{equation*}
\begin{split}
B(E_h \delta_h-\delta_h;\psi,\phi) 
= \sum_{\E \in  \CT_h}  (\Delta (E_h \delta_h-\delta_h),\curl \psi \cdot \nabla \phi)_{0,\E} 
\leq \sum_{\E \in  \CT_h} \|\Delta (E_h \delta_h-\delta_h)\|_{-1,\E} \|\curl \psi \cdot \nabla \phi\|_{1,\E}.
\end{split}
\end{equation*}
Now, by using the  definition of the dual norm  and an integration by part, we obtain
\begin{equation*}
\begin{split}
\|\Delta (E_h \delta_h-\delta_h)\|_{-1,\E} &= \sup_{\varphi \in H_{0}^1(\E)} \frac{ (\Delta (E_h \delta_h-\delta_h),\varphi)_{0,\E} }{|\varphi|_{1,\E}}  = \sup_{\varphi \in H_{0}^1(\E)} \frac{ (\nabla(E_h \delta_h-\delta_h),\nabla \varphi)_{0,\E} }{|\varphi|_{1,\E}} \\
& \leq |E_h \delta_h-\delta_h|_{1,\E}.
\end{split}
\end{equation*}
From the two  estimates above, the H\"{o}lder inequality for sequences and \eqref{reg:axi},  we have
\begin{equation*}
\begin{split}
B(E_h \delta_h-\delta_h;\psi,\phi) &\leq 
\sum_{\E \in  \CT_h} |E_h \delta_h-\delta_h|_{1,\E} \|\curl \psi \cdot \nabla \phi\|_{1,\E} \leq 
|E_h \delta_h-\delta_h|_{1,h} \|\curl \psi \cdot \nabla \phi\|_{1,\O} \\
&\leq C h^{2\gamma}\|\psi\|_{2+\gamma,\O} \| \phi\|_{2+\gamma,\O}\leq  C h^{2\gamma}\|\psi\|_{2+\gamma,\O} \|\nabla E_h \delta_h\|_{0,\O}.
\end{split}
\end{equation*}
Consequently, inserting  the above inequality in \eqref{pre:I1}, we arrive  to
		\begin{equation}\label{term:I1}
			I_1 \leq C h^{2\gamma}\|\psi\|_{2+\gamma,\O} \|\nabla E_h \delta_h\|_{0,\O}.
		\end{equation}
		Now, we will estimate the remaining term $I_2$. Indeed, we split again $\delta_h:=(\psi_h-\psi)+(\psi-\psi_I)$, then 
		\begin{equation}\label{pre:I2}
			I_2 = 	-\calA^{DP}(\psi; \psi-\psi_h, \phi) + \calA^{DP}(\psi; \psi-\psi_I, \phi) =: -I_{21} +I_{22}.
		\end{equation}
		By using analogous arguments those employed to bound the term $I_1$ and applying Proposition~\ref{approx:virtual} and Lemma~\ref{Theorem:AD2}, we can obtain
		\begin{equation}\label{term:I22}
			I_{22} \leq  C h^{2\gamma}\|\psi\|_{2+\gamma,\O} \|\nabla E_h \delta_h\|_{0,\O}.
		\end{equation}
		Next, adding and subtracting  $\phi_I$, $B(\psi;\psi,\phi_I)$ and other suitable terms
		together with the definition of the continuous and discrete problems (cf. \eqref{weak:stream:NS} and 
		\eqref{NSE:stream:disc}, respectively),  we obtain
		\begin{equation}\label{term:I21}
			\begin{split}
				I_{21} &=\nu A (\psi- \psi_{h},\phi)+ B(\psi;\psi- \psi_{h},\phi)+B(\psi- \psi_{h};\psi,\phi)\\
				&=\nu A(\psi-\psi_h, \phi- \phi_I)+\nu A(\psi-\psi_h, \phi_I)
				+ B(\psi;\psi- \psi_{h},\phi)+ B(\psi- \psi_{h};\psi,\phi)\\
				&= \nu A(\psi-\psi_h, \phi- \phi_I)+F(\phi_I)-F_h(\phi_I) + \nu A_h(\psi_h,\phi_I)
				+B_h(\psi_h;\psi_h,\phi_I) \\
				& \qquad  -B(\psi;\psi,\phi_I)-\nu A(\psi_h, \phi_I) + B(\psi;\psi- \psi_{h},\phi)+ 
				B(\psi- \psi_{h};\psi,\phi)\\
				&= \nu A(\psi-\psi_h, \phi- \phi_I)+  \nu[A_h(\psi_h,\phi_I)- A(\psi_h, \phi_I)] + [F(\phi_I)- F_h(\phi_I)]\\
				& \qquad  +[B_h(\psi_h;\psi_h,\phi_I-\phi) -B(\psi;\psi,\phi_I-\phi)] \\
				& \qquad + B(\psi-\psi_h;\psi- \psi_{h},\phi) +[B_h(\psi_h;\psi_h,\phi)-B(\psi_h;\psi_{h},\phi)]\\ 
				&  =: T_{A1}+T_{A2}+T_{F} +T_{B1}+T_{B2}+ T_{B3},
			\end{split}
		\end{equation}
		where also we have used also the identity
		\begin{equation*}
			\begin{split}
				B(\psi;\psi- \psi_{h},\phi)&+B(\psi- \psi_{h};\psi,\phi)+ B_h(\psi_h;\psi_h,\phi)-B(\psi;\psi,\phi)\\
				&=B(\psi-\psi_h;\psi- \psi_{h},\phi) +[B_h(\psi_h;\psi_h,\phi)-B(\psi_h;\psi_{h},\phi)].
			\end{split}
		\end{equation*}
		By using standard arguments and \eqref{reg:axi} we obtain that 
		\begin{equation}\label{many:terms}
			T_{A1}+T_{A2}+T_{F} +T_{B2} \leq C h^{2\gamma }
			\left(\|\fb\|_{0,\O}+\|\psi\|_{2+\gamma,\Omega} \right)  \|\nabla E_h\delta_h\|_{0,\O}.
		\end{equation}
		For the  remaining term $T_{B1}$,  we employ Lemmas~\ref{lemma-tec2} and~\ref{consist:term},  to obtain
		\begin{equation}\label{TB1}
			\begin{split}
				|T_{B1}| & \leq |B(\psi;\psi,\phi_I-\phi) -B(\psi;\psi,\phi_I-\phi)|
				+|B_h(\psi;\psi,\phi_I-\phi) -B_h(\psi_h;\psi_h,\phi_I-\phi)|\\
				& \leq  C  h^{\gamma} (\|\psi\|_{1+\gamma,\O}+\|\psi\|_{2,\O})\|\psi\|_{2+\gamma,\O}|\phi_I-\phi|_{2,h}\\
				& \quad+ \CBh \left( |\psi_h|_{2,h} |\phi_I-\phi|_{2,h}+|(\psi-\psi_h)+(\phi_I-\phi)|_{2,h}  
				(|\psi|_{2,h} + \|\psi_h\|_{2,\O}) \right)|\phi_I-\phi|_{2,h} \\
				& \leq  Ch^{2\gamma } (\|\psi\|_{1+\gamma,\O}+\|\psi\|_{2,\O})\|\psi\|_{2+\gamma,\O}\|\nabla E_h\delta_h\|_{0,\O}\\
				&\ \quad + Ch^{2\gamma }  (\|\psi\|_{2+\gamma,\O} +\|\fb\|_{0,\O})(\|\psi\|_{2,\O} + |\psi_h|_{2,h})\|\nabla E_h\delta_h\|_{0,\O}\\  
				& \: \quad+ Ch^{2\gamma}(|\psi_h|_{2,h} + \|\psi\|_{2,\O})\|\nabla E_h\delta_h\|_{0,\O},
			\end{split}
		\end{equation} 
	where we have used Theorem~\ref{Theo-conv} and~\eqref{reg:axi}.	For the term $T_{B3}$, we observe that $T_{B3}=T_B(\phi)$, then by  using Lemma~\ref{LemmaTB3} and 
		\eqref{reg:axi} we get
		\begin{equation}\label{TB3}
			\begin{split}
				T_{B3}	&\leq Ch^{2\gamma } (\|\psi\|_{2+\gamma,\O} +\|\fb\|_{0,\O}) \|\phi\|_{2+\gamma,\O}
				+2\CR \widetilde{C}^2_{{\rm sob}}\CBD^2\|\fb\|_{0,\O}|\psi-\psi_h|_{1,h}\|\phi\|_{2+\gamma,\O}\\
				&\leq Ch^{2\gamma } (\|\psi\|_{2+\gamma,\O} +\|\fb\|_{0,\O}) \|\nabla E_h\delta_h\|_{0,\O}
				+2\CR \widetilde{C}^2_{{\rm sob}}\CBD^2\|\fb\|_{0,\O}|\psi-\psi_h|_{1,h}\|\nabla E_h\delta_h\|_{0,\O}.
			\end{split}
		\end{equation}
		Combining \eqref{pre:I2}-\eqref{TB3}, we have
		\begin{equation}\label{term:I2}
			\begin{split}
				|I_2| \leq C(\|\psi\|_{2+s,\O} +\|\fb\|_{0,\O}) \|\nabla E_h\delta_h\|_{0,\O}
				+2\CR \widetilde{C}^2_{{\rm sob}}\CBD^2\|\fb\|_{0,\O}|\psi-\psi_h|_{1,h}\|\nabla E_h\delta_h\|_{0,\O}.
			\end{split}
		\end{equation}
		The desired result follows by combining the estimates \eqref{trian00}, \eqref{trian01}, \eqref{term:I1} and \eqref{term:I2} together with the fact that $(1-2\CR \widetilde{C}^2_{{\rm sob}}\CBD^2\|\fb\|_{0,\O} )>0$ (see assumption \eqref{assup:additional}).
		
		Finally, the $L^2$ estimate in \eqref{L2:H1:norms} is obtained from the triangle inequality, Proposition \ref{approx:virtual}, Lemma \ref{Theorem:AD2} and 
		Theorem \ref{Theo-conv} as follow:
		\begin{equation*}
			\begin{split}
				\Vert\psi-\psi_h\Vert_{0,\O}&\leq \Vert\psi-\psi_I\Vert_{0,\O}+\Vert \delta_h-E_h \delta_h\Vert_{0,\O} 
				+\Vert E_h\delta_h\Vert_{0,\O}\\
				& \leq Ch^{2+\gamma}\|\psi\|_{2+\gamma,\O} + Ch^2(\vert \psi_h-\psi \vert_{2,h}+\vert \psi-\psi_I \vert_{2,h}) + C\vert E_h\delta_h\vert_{1,\O}\\			
				& \leq Ch^{2\gamma}(\|\psi\|_{2+\gamma,\O}+\|\fb\|_{0,\O}),
			\end{split}	
		\end{equation*}		
		where we have used  norm equivalence in $\Phi$.
		The proof is complete.
\end{proof}}

We finish this section establishing the following remark.
\begin{remark}\label{remark-1}
	If $\fb$ is a smooth function, then applying an integration
	by parts and the boundary conditions in \eqref{loadcont}, we have
	that $(\fb, \curl \phi )_{0,\O}=(\rot  \fb,  \phi )_{0,\O} \quad \forall \phi \in \Phi$.
	Inspired by this identity, we can consider an alternative
	right hand side as follows: 
	\begin{equation}\label{load-alter-global}
		\widetilde{F}_h(\phi_h):=\sum_{\E\in\CT_h}(\rot \fb, \PioK \phi_h)_{0,\E}
		\qquad \forall \phi_h\in\Vh.
	\end{equation}
	We note that  $\widetilde{F}_h(\cdot)$ is fully computable using the degrees of freedom 
	$\DVu-\DVd$, since $\PioK$ is computable (cf. Lemma~\eqref{lemm-PioK}). 
	
	For the VE scheme \eqref{NSE:stream:disc} considering the alternative load term~\eqref{load-alter-global},
	we can provide an analogous analysis as the one develop in the  above sections. 
	Therefore, we can obtain the rate of convergences as in Theorems~\ref{Theo-conv} and \ref{Theo:conv:H1:L2}.
	We will present a numerical test to confirm the error estimates in this case (cf. Subsection~\ref{test:small:nu}).
	Moreover, we observe that if the density force is irrotational, i.e., $\fb= \nabla \varphi$ (for some $ \varphi$), it is possible improve the error estimate in Theorem~\ref{Theo-conv} by removing the dependence of the error by the load term $\fb$.

\end{remark}

\section{Postprocessing of further fields of interest}
\label{rec:fields}
\setcounter{equation}{0}
In this section we propose post-processing techniques that allow obtain 
approximations of the velocity, vorticity and pressure fields  from the discrete stream-function $\psi_h$.

\subsection{Postprocessing the velocity field}\label{rec:velocity}
In order to propose an approximation for the velocity field, we recall that  
if $\psi\in \Phi$ the unique solution of continuous problem~\eqref{weak:stream:NS}, 
then 
\begin{equation}\label{continuous:velocity}
\ub = \curl \psi.
\end{equation}
At the discrete level, we define a piecewise linear approximation 
of the velocity field  $\bu$ as
\begin{equation}\label{discrete:velocity}
\widetilde{\ub}_h|_{\E} := \PiunoKb \curl \psi_h,
\end{equation}
where $\psi_h \in \Vh$ is discrete virtual solution delivered by solving problem \eqref{NSE:stream:disc} and the 
operator $\PiunoKb$ is defined by the vectorial version of \eqref{proymdos}.

We have the following result for velocity vector $\widetilde{\ub}_h$.
\begin{theorem}\label{theorem:velocity}
The discrete velocity field $\widetilde{\ub}_h$ defined by the 
relation~\eqref{discrete:velocity} is computable  from  the 
degrees of freedom $\DVu-\DVd$. Moreover,	
under the hypotheses of Theorem~\ref{Theo-conv},
there exists a positive constant $C$, 
independent of $h$, such that
\begin{equation*}
\|\ub- \widetilde{\ub}_h\|_{0,\O}+h^{\gamma}|\ub- \widetilde{\ub}_h|_{1,h}  \leq Ch^{2\gamma}(\|\psi\|_{2+\gamma,\O}+\|\fb\|_{0,\O}).
\end{equation*}
\end{theorem}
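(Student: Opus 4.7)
The computability of $\widetilde{\ub}_h$ is immediate: since $\psi_h \in \Vh$, Lemma~\ref{lemm-PioK} guarantees that $\PiunoKb \curl \psi_h$ is reconstructible element by element from the local degrees of freedom $\DVu$--$\DVd$. For the error estimate, the natural starting point is the triangle inequality after inserting the projected stream-function,
\begin{equation*}
\ub - \widetilde{\ub}_h \;=\; \curl \psi - \PiunoKb \curl \psi_h \;=\; \bigl(\curl \psi - \PiunoKb \curl \psi\bigr) + \PiunoKb \curl(\psi - \psi_h),
\end{equation*}
which separates the consistency error of the $L^2$-projection from the propagated stream-function error.

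For the first summand, $\psi \in H^{2+\gamma}(\O)$ implies $\curl \psi \in H^{1+\gamma}(\E)^2$ on each $\E \in \CTh$, so the standard polynomial approximation theory on star-shaped polygons satisfying ${\bf A_1}$--${\bf A_2}$ gives
\begin{equation*}
\|\curl \psi - \PiunoKb \curl \psi\|_{0,\E} \leq C h_\E^{1+\gamma} |\psi|_{2+\gamma,\E}, \qquad |\curl \psi - \PiunoKb \curl \psi|_{1,\E} \leq C h_\E^{\gamma} |\psi|_{2+\gamma,\E}.
\end{equation*}
For the second summand, I would rely on the $L^2$- and $H^1$-stability of the vector $L^2$-projection $\PiunoKb$, namely $\|\PiunoKb \vb\|_{0,\E} \leq \|\vb\|_{0,\E}$ (from~\eqref{bound:proj}) and $|\PiunoKb \vb|_{1,\E} \leq C |\vb|_{1,\E}$, to obtain
\begin{equation*}
\|\PiunoKb \curl(\psi-\psi_h)\|_{0,\E} \leq |\psi-\psi_h|_{1,\E}, \qquad |\PiunoKb \curl(\psi-\psi_h)|_{1,\E} \leq C |\psi-\psi_h|_{2,\E}.
\end{equation*}

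Assembling these local bounds over $\CTh$ and combining both contributions yields
\begin{equation*}
\|\ub - \widetilde{\ub}_h\|_{0,\O} \leq C h^{1+\gamma} |\psi|_{2+\gamma,\O} + C|\psi-\psi_h|_{1,h}, \qquad |\ub - \widetilde{\ub}_h|_{1,h} \leq C h^{\gamma} |\psi|_{2+\gamma,\O} + C|\psi-\psi_h|_{2,h}.
\end{equation*}
The $H^1$- and $L^2$-rates of Theorem~\ref{Theo:conv:H1:L2} control $|\psi-\psi_h|_{1,h}$ by $Ch^{2\gamma}(\|\psi\|_{2+\gamma,\O}+\|\fb\|_{0,\O})$, while Theorem~\ref{Theo-conv} controls $|\psi-\psi_h|_{2,h}$ by $Ch^{\gamma}(\|\psi\|_{2+\gamma,\O}+\|\fb\|_{0,\O})$. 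Using the elementary inequality $h^{1+\gamma} \leq h^{2\gamma}$, valid for $\gamma \in (1/2,1]$ and $h \leq 1$, the first bound gives the desired $L^2$-estimate, and multiplying the second by the weight $h^{\gamma}$ gives the claimed $h^{2\gamma}$ contribution; adding both closes the argument.

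The delicate point I expect is the $H^1$-stability of $\PiunoKb$ on general polygons, which is not automatic for non-quasi-uniform meshes but follows under ${\bf A_1}$--${\bf A_2}$ by means of the virtual sub-triangulation property ${\bf P_1}$ together with a Bramble--Hilbert and scaling argument on the reference triangle; all other ingredients are either immediate consequences of earlier results or routine projection estimates.
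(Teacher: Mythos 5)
Your proposal is correct and follows essentially the same route as the paper, whose proof is only a two-line sketch citing the triangle inequality, the stability of $\PiunoKb$, and Theorems~\ref{Theo-conv} and \ref{Theo:conv:H1:L2}; you have simply filled in the details (the splitting $\ub-\widetilde{\ub}_h=(\curl\psi-\PiunoKb\curl\psi)+\PiunoKb\curl(\psi-\psi_h)$, the local projection estimates, and the observation $h^{1+\gamma}\leq h^{2\gamma}$ for $\gamma\leq 1$). Your remark on the $H^1$-stability of $\PiunoKb$ on star-shaped polygons is the right point to flag, and the standard inverse-inequality argument under ${\bf A_1}$--${\bf A_2}$ indeed settles it.
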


\begin{proof}
From Lemma~\ref{lemm-PioK} we have immediately the computability of $\widetilde{\ub}_h$ 
by using $\DVu-\DVd$. On the other hand, the error estimate, follow from \eqref{continuous:velocity},
\eqref{discrete:velocity}, the triangular inequality, stability  property  of $\PiunoKb$, together with Theorems~\ref{Theo-conv} and \ref{Theo:conv:H1:L2}. 

\end{proof}

\subsection{Postprocessing the vorticity field}\label{rec:vorticity}    

Due its importance and applications  in fluid mechanics, different works have been 
devoted to approximate the vorticity field of the incompressible 
Navier-Stokes equations; see for instance \cite{AB1999_M2AN,GGS82,ABMCR2015} 
and the references therein. By solving the nonconforming discrete problem~\eqref{NSE:stream:disc}, 
we only obtain an approximation for the stream-function. Nevertheless, 
in this subsection we propose an approximation for the
 vorticity field $\omega$ via postprocessing formula through the discrete 
 stream-function $\psi_h$ and the projection $\PimoK$ defined by the relation \eqref{proymdos}.

First, we recall that $\omega=\rot\ub$, then using the identity
$\ub=\curl\psi$, we have obtain $\omega = \rot \ub =\rot (\curl \psi) = -\Delta \psi. $
Then, at discrete level we define the following approximation for the vorticity:
\begin{equation}\label{discrete:vorticity}
	\widetilde{\omega}_h|_{\E} := -\PimoK(\Delta \psi_h), 
\end{equation}
where $\psi_{h}\in\Vh$ is the unique solution of problem
\eqref{NSE:stream:disc} and $\PimoK$ is defined in \eqref{proymdos}.
 
We have  the following result for the discrete vorticity.

\begin{theorem}\label{teo-conv-voti}
The discrete vorticity field $\widetilde{\omega}_h$ defined by the 
relation~\eqref{discrete:vorticity} is computable  from  the 
degrees of freedom $\DVu-\DVd$. Moreover,	
under the hypotheses of Theorem~\ref{Theo-conv},
there exists a positive constant $C$, 
independent of $h$, such that
\begin{equation*}
\|\omega- \widetilde{\omega}_h\|_{0,\O}  \leq Ch^{\gamma}(\|\psi\|_{2+\gamma,\O}+\|\fb\|_{0,\O}).
\end{equation*}
\end{theorem}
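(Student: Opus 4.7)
The plan is straightforward: the computability of $\widetilde{\omega}_h$ follows immediately from Lemma~\ref{lemm-PioK}, since $\PimoK \Delta \psi_h$ is listed there as explicitly computable from the set $\DVu$--$\DVd$. So the main work is the error bound.

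For the error estimate, I would start by adding and subtracting the $L^2$-projection of the exact quantity on each element $\E$: writing
\begin{equation*}
\omega - \widetilde{\omega}_h\big|_{\E} = -\Delta\psi + \PimoK(\Delta\psi_h)
= -(\Delta\psi - \PimoK \Delta \psi) + \PimoK(\Delta\psi_h - \Delta \psi),
\end{equation*}
and then invoking the triangle inequality so that
\begin{equation*}
\|\omega - \widetilde{\omega}_h\|_{0,\E}
\le \|\Delta\psi - \PimoK \Delta\psi\|_{0,\E}
+ \|\PimoK(\Delta\psi_h - \Delta\psi)\|_{0,\E}.
\end{equation*}
For the first term, I would apply standard approximation properties of the local $L^2$-projection onto $\P_0(\E)$, using that $\psi \in H^{2+\gamma}(\O)$ so that $\Delta\psi \in H^{\gamma}(\E)$, to obtain
\begin{equation*}
\|\Delta\psi - \PimoK \Delta\psi\|_{0,\E} \le C h_{\E}^{\gamma}\,|\Delta\psi|_{\gamma,\E} \le C h_{\E}^{\gamma}\,\|\psi\|_{2+\gamma,\E}.
\end{equation*}
For the second term, I would use the $L^2$-stability of $\PimoK$ stated in \eqref{bound:proj} to get
\begin{equation*}
\|\PimoK(\Delta\psi_h - \Delta\psi)\|_{0,\E} \le \|\Delta(\psi_h - \psi)\|_{0,\E} \le |\psi - \psi_h|_{2,\E}.
\end{equation*}

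Finally, I would square and sum over $\E \in \CTh$, extract the square root, and apply Theorem~\ref{Theo-conv} to the resulting broken $H^2$-seminorm term, yielding
\begin{equation*}
\|\omega - \widetilde{\omega}_h\|_{0,\O}
\le C h^{\gamma}\|\psi\|_{2+\gamma,\O} + |\psi-\psi_h|_{2,h}
\le C h^{\gamma}\bigl(\|\psi\|_{2+\gamma,\O}+\|\fb\|_{0,\O}\bigr).
\end{equation*}
There is no genuine obstacle here: the argument is a standard postprocessing estimate that reduces in one step to the already-proved broken $H^2$-rate of Theorem~\ref{Theo-conv}, with the projection error contributing only a harmless $O(h^{\gamma})$ term of the same order. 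The proof should be brief.
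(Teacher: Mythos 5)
Your proposal is correct and follows essentially the same route as the paper, which simply states that the estimate follows from the triangle inequality, the stability and approximation properties of the projection $\PimoK$, and Theorem~\ref{Theo-conv}; your decomposition makes that sketch explicit. The only cosmetic point is that $\|\Delta(\psi_h-\psi)\|_{0,\E}\le C\,|\psi-\psi_h|_{2,\E}$ holds with a harmless constant rather than with constant $1$, which does not affect the result.
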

\begin{proof}
The proof follows by using the same arguments in Theorem~\ref{theorem:velocity}.

\end{proof}

\subsection{Postprocessing the pressure field}\label{rec:pressure} 
This subsection is devoted to developing a strategy to recover the pressure variable form the 
discrete stream-function solution $\psi_h$ of problem~\eqref{NSE:stream:disc}, which is based on the 
algorithm presented in~\cite{CN89} and extended to the nonconforming VEM approach. 

We start by recalling that if $\psi \in \Phi$ is the unique solution 
of the weak formulation~\eqref{weak:stream:NS}, then the velocity field is given 
by $\ub=\curl \psi$. Thus, we can write 
\begin{equation}\label{Pressure:eqn1}
	\begin{split}
		b(\vb,p):= (p,\div\vb)_{0,\Omega}
		&= \nu (\nablab \ub,\nablab \vb)_{0,\O}+((\nablab\bu)\ub,\vb)_{0,\O}
		-(\fb,\vb)_{0,\O}\\
		&= \nu(\nablab\curl\psi, \nablab\vb)_{0,\Omega}
		+  ((\nablab\curl\psi)\curl\psi,\vb)_{0,\Omega}
		-(\fb,\vb)_{0,\O} \qquad \forall \vb \in \H.
	\end{split}
\end{equation}    

Now, we consider the functional
$\mathcal{F}(\psi,\fb)(\cdot):\H\to\mathbb{R}$ given by
\begin{equation}\label{pressure:load}
\mathcal{F}(\psi,\fb)(\vb)
	:= \nu(\nablab\curl\psi, \nablab\vb)_{0,\Omega}
	+  ((\nablab\curl\psi)\curl\psi,\vb)_{0,\Omega}
	-(\fb,\vb)_{0,\O} \qquad \forall \vb \in \H.
\end{equation}

By using \eqref{Pressure:eqn1} and \eqref{pressure:load}, we
reformulate~\eqref{standard:weak:NS} as a variational problem for the
pressure variable: given $\psi\in\Phi$ the unique solution of
problem~\eqref{weak:stream:NS} and  $\fb\in	L^2(\Omega)^2$, find $p\in Q$ such that
\begin{equation}\label{pressure:recovery}
b(\vb,p) = \mathcal{F}(\psi,\fb)(\vb) \qquad \forall\vb\in\H,
\end{equation}
where $\H$ and $Q$ are the spaces defined in~\eqref{spaces:H1:L20}. From an equivalence of problems and the LBB theory we have that problem~\eqref{pressure:recovery} has a unique solution $p\in Q$ (see \cite{GR}). 

The difficulties to discretize directly problem~\eqref{pressure:recovery} have been  discussed in \cite[Section 9]{CN86}. Thus, inspired in this work we consider 
the following equivalent problem: find $(\wb,p)\in \H\times Q$, such that
\begin{equation}\label{pressure:mixed}
\begin{split}
a(\wb,\vb) + b(\vb, p) &= \mathcal{F}(\psi,\fb)(\vb)  \qquad\forall\vb\in\H \\ 
b(\wb, q)              &= 0 \qquad \qquad \qquad  \:\forall q\in Q,
\end{split}
\end{equation}	
where $a(\widetilde{\vb},\vb):=(\nablab\widetilde{\vb},\nablab\vb)_{0,\Omega} \quad \forall \widetilde{\vb},\vb \in \H$. 
We have that this Stokes-like problem is well-posed. Moreover, $\wb=0$.  Now the goal is to discretize the problem~\eqref{pressure:mixed}.

\subsubsection{Nonconforming Crouzeix-Raviart-type VE discretization}
In this subsection we will present  a VE scheme to solve problem~\eqref{pressure:mixed}.
First, we recall that the Morley-type VE space $\Vh$ is in a Stokes-complex relation 
with the Crouzeix-Raviart type VE space $\CRS_h$, defined in \eqref{global:space:fnc} 
and \eqref{CR_Glob}, respectively. Apart from the previously mentioned spaces, 
we introduce the space for pressure approximation as 
\begin{equation}
	\label{Pressure:Fld}
	Q_h:=\{q_h\in Q: q_h|_{\E}\in \P_0(\E) \quad \forall \E \in \CT_h \}.
\end{equation}
 At last, we introduce the auxiliary space
\begin{equation}\label{eq:CR:SbSp}
	\widehat{\CRS}_h := \Big\{\vb_h\in \CRS_h :
	\sum_{\E\in\CT_h}  (q_h,\div\vb_h)_{0,\E} =0
	\quad\forall q_h\in Q_h	\Big\},
	\end{equation}
where $\CRS_h$ is the Crouzeix-Raviart-type VE space defined in~\eqref{CR_Glob}.
\begin{lemma}\label{lemma:stokes:complex}
	Let $\Vh$ and  $\widehat{\CRS}_h$  be the spaces defined in~\eqref{global:space:fnc} 
	and  in \eqref{eq:CR:SbSp}, respectively. Then, it holds that
	\begin{align*}
		\curl\,\Vh = \widehat{\CRS}_h,
	\end{align*}
\end{lemma}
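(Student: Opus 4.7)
The plan is to establish the equality by verifying both inclusions separately. For the forward inclusion $\curl\Vh\subset \widehat{\CRS}_h$, given any $\phi_h\in\Vh$, I would first show that $\curl\phi_h|_\E\in\CRS_h(\E)$ on each element $\E\in\CTh$. The identity $\div\curl\equiv 0$ places $\curl\phi_h|_\E$ among the zero-divergence $H^1$-fields on $\E$, so membership in $\CRS_0(\E)=\widehat{\CRS}(\E)+\curl(\widetilde{\calZ}(\E))$ reduces to producing an explicit decomposition $\curl\phi_h=\vb_1+\curl\zeta$ with $\vb_1\in\widehat{\CRS}(\E)$ and $\zeta\in\widetilde{\calZ}(\E)$; the piece $\zeta$ would be engineered to absorb the non-constant part of $\Delta\phi_h$ so that the $\rot$ of the remainder lies in $\P_0(\E)$. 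The edge orthogonality required by $\CRS_h(\E)$ is, by design, one of the defining conditions of $\VK$, while the trace $\curl\phi_h\cdot\nb_e=\partial_{\tb_e}\phi_h\in\P_1(e)$ is a direct consequence of $\phi_h|_e\in\P_2(e)$.

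Next I would verify the global jump condition $(\jump{\curl\phi_h},\boldsymbol{1})_{0,e}=0$ for every $e\in\EE_h$. Using the pointwise identities $\curl\phi_h\cdot\nb_e=\partial_{\tb_e}\phi_h$ and $\curl\phi_h\cdot\tb_e=-\partial_{\nb_e}\phi_h$, this splits into two scalar conditions. For the normal component, integrating $\jump{\partial_{\tb_e}\phi_h}$ along $e$ reduces to evaluating $\jump{\phi_h}$ at the endpoints of $e$, which vanishes thanks to the continuity of $\phi_h$ at interior vertices together with $\phi_h(\bv)=0$ at boundary vertices. For the tangential component, the identity is exactly the nonconforming moment $(\jump{\partial_{\nb_e}\phi_h},1)_{0,e}=0$ built into $\HncT$. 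Finally, $\sum_{\E\in\CTh}(q_h,\div\curl\phi_h)_{0,\E}=0$ for every $q_h\in Q_h$ is immediate from $\div\curl\equiv 0$.

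For the reverse inclusion $\widehat{\CRS}_h\subset\curl\Vh$, I would begin by showing that every $\vb_h\in\widehat{\CRS}_h$ in fact satisfies $\div\vb_h=0$ pointwise on each element. Since elements of $\CRS_0(\E)$ have piecewise constant divergence, orthogonality to the mean-zero piecewise constants $Q_h$ forces these elementwise constants to coincide across $\CTh$, while the mean jump condition defining $\CRS_h$ on interior and boundary edges makes their common value integrate to zero over $\O$, hence vanish on every $\E$. With $\div\vb_h=0$ and $\O$ simply connected, one can construct a local stream-function on each element and glue the pieces. The cleanest completion is then a dimension count: the kernel of $\curl:\Vh\to L^2(\O)^2$ consists of elementwise constants, and the vertex constraints in $\HncT$ force such a function to be identically zero, giving $\dim(\curl\Vh)=\dim\Vh$; computing $\dim\widehat{\CRS}_h$ through the edge degrees of freedom $\DU$ and the codimension coming from the divergence constraint then yields $\dim\widehat{\CRS}_h=\dim\Vh$, which combined with the already established forward inclusion produces equality.

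The main obstacle I anticipate is the verification that $\curl\phi_h\in\CRS_0(\E)$, which is not immediate from the abstract definition because $\Delta^2\phi_h$ is a nonzero element of $\P_2(\E)$ whereas elements of $\widetilde{\calZ}(\E)$ are required to be biharmonic; producing the additive decomposition thus has to exploit the precise interplay between the polynomial structure of $\VK$ and of $\widetilde{\calZ}(\E)$. A secondary delicate point is the global dimension count for $\widehat{\CRS}_h$, where the piecewise-constant pressure orthogonality must be balanced exactly against the local degrees of freedom $\DU$ and the boundary-edge mean conditions to produce the clean identity $\dim\widehat{\CRS}_h=\dim\Vh$.
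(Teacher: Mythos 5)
The paper offers no argument of its own here: its entire proof is a citation to \cite[Lemma 6.1]{AM2022}, so there is no in-paper derivation to match your sketch against. Your overall architecture (forward inclusion, injectivity of $\curl$ on $\Vh$, dimension count to upgrade inclusion to equality) is the standard route for this kind of exactness statement, and several of your steps are sound: the identities $\curl\phi_h\cdot\nb_e=\partial_{\tb_e}\phi_h$ and $\curl\phi_h\cdot\tb_e=-\partial_{\nb_e}\phi_h$ do reduce the mean-jump condition on $\curl\phi_h$ to the vertex continuity/vanishing of $\phi_h$ and to the moment condition built into $\HncT$; the edge orthogonality defining $\CRS_h(\E)$ is literally the first condition in \eqref{local:space:nc}; and your argument that $Q_h$-orthogonality plus the edge jump conditions force the elementwise constants $\div\vb_h|_{\E}$ to be equal and then zero is correct.

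The genuine gap is precisely the step you flag as the main obstacle, and it is worse than you suggest: the decomposition $\curl\phi_h=\vb_1+\curl\zeta$ with $\vb_1\in\widehat{\CRS}(\E)$ and $\zeta\in\widetilde{\calZ}(\E)$ cannot be ``engineered'' for a general element of $\VK$. Since $\curl\phi_h$ and $\curl\zeta$ are divergence-free, $\vb_1$ is forced to be divergence-free, hence on the simply connected polygon $\E$ one has $\vb_1=\curl\chi$ with $-\Delta\chi=\rot\vb_1\in\P_0(\E)$, so $\Delta^2\chi=0$; combined with $\Delta^2\zeta=0$ this gives $\Delta^2\phi_h=\Delta^2(\chi+\zeta)=0$. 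In other words, a divergence-free field belongs to $\CRS_0(\E)$ only if it is the curl of a function with vanishing bilaplacian, so $\curl\phi_h\in\CRS_0(\E)$ \emph{requires} $\Delta^2\phi_h=0$ --- there is no choice of $\zeta$ that ``absorbs'' a non-harmonic $\Delta\phi_h$. The enhanced space $\VK$ only guarantees $\Delta^2\phi_h\in\P_2(\E)$, and the constraint $(\phi_h-\PiK\phi_h,\chi)_{0,\E}=0$ does not force the bilaplacian to vanish; your proof therefore needs either an extra argument restricting to biharmonic representatives or the variant of the local spaces used in \cite{AM2022,ZZMC19}. Two smaller omissions: the dimension identity $\dim\widehat{\CRS}_h=\dim\Vh$ requires Euler's formula together with surjectivity of the discrete divergence onto $Q_h$ (i.e.\ the inf-sup condition of Theorem~\ref{lemma:dis:inf_sup}), and neither is supplied; and the ``construct a local stream-function and glue'' step is never needed once the dimension count is in place, so it should be dropped rather than left as an unverified claim that the glued function lands in the virtual space $\VK$.
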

\begin{proof}
	The proof can be followed from \cite[Lemma~6.1]{AM2022}.
\end{proof}

By employing the projection operator $\Pivec$ defined in \eqref{proj:H1:vec}, we discretize the
bilinear form $a(\cdot,\cdot)$ through the bilinear form
$a_h:\CRS_h\times\CRS_h\to\mathbb{R}$, which is such
that
\begin{align*}
	a_h(\wb_h,\vb_h)
	:= \sum_{\E\in\CT_h}a_h^K(\wb_h,\vb_h)
	= \sum_{\E\in\CT_h} \Big(
	a^{\E}\big(\Pivec\ub_h,\Pivec\vb_h\big)
	+ S_{\nablab}^{\E}\big((\Ib-\Pivec)\bu_h,(\Ib-\Pivec)\vb_h\big)
	\Big),
\end{align*}

where $S_{\nablab}^{\E}(\cdot,\cdot)$ is a symmetric, positive-definite bilinear
form satisfying the stability condition
\begin{align*}
	c_{\#} a^{\E}(\vb_h,\vb_h) \leq S_{\nablab}^{\E}(\vb_h,\vb_h)\leq c^{\#} a^{\E}(\vb_h,\vb_h)
	\qquad\forall\vb_h\in {\rm Ker}(\Pivec),
\end{align*}
for some pair of strictly positive, real constants $c_{\#}$ and $c^{\#}$, independent of  $h$.

Then, we define the bilinear form $b_h:\CRS_h\times Q_h\to\mathbb{R}$ as

\begin{align}
	b_h(\vb_h,q_h) := \sum_{\E\in\CT_h}  (q_h,\div\vb_h)_{0,\E}.
	\label{eq:bh:def}
\end{align}

The next step is the construction of a discrete version of the lineal functional defined in~\eqref{pressure:load}. 
To do that, first we consider the constant vector field $\boldsymbol{\Pi}_{\E}^0:\CRS_h(\E)\to\P_0(\E)^2$, 
defined on $\CRS_h(\E)$. Then, we consider  the following discrete functional
$\mathcal{F}_h(\psi_h,\fb)(\cdot):\CRS_h\to\mathbb{R}$
\begin{equation}\label{load:pressure:Dis}
	\begin{split}
		\mathcal{F}_h(\psi_h,\fb)(\vb_h)
		:= \sum_{\E\in\CT_h} \Big( a^{\E}\big(\Pivec\curl\psi_h,\Pivec\vb_h\big)
		& + ((\nablab\PiunoKb\curl\psi_h)\PiunoKb \curl \psi_h-\fb,\PizeroK\vb_h)_{0,\E})	\Big).
	\end{split}
\end{equation}
From the stability properties of projectors $\Pivec$, $\PiunoKb$ and $\PizeroK$, we have that the $\mathcal{F}_h(\psi_h,\fb)(\cdot)$  is continuous. Moreover, the projection $\PizeroK$ is computable by using the degrees of freedom $\DU$. Then, from  this fact and Lemma~\ref{lemma:prop:Morley}, we conclude that this functional is fully computable.

Now, we present the virtual element discretization of the Stokes
problem~\eqref{pressure:mixed} that reads as:
find $(\wb_h,p_h) \in \CRS_h \times Q_h$ such that
\begin{equation}\label{pressure:discrete}
	\begin{split}
a_h(\wb_h,\vb_h) + b_h(\vb_h,p_h) &= \mathcal{F}_h(\psi_h,\fb)(\vb_h)  \qquad\forall\vb_h\in\CRS_h,\\[0.5em]
b_h(\wb_h,q_h)                   &= 0 \qquad \qquad \qquad  \qquad \:\forall q_h\in Q_h,		
	\end{split}
\end{equation}	
where $Q_h$ is the space defined in~\eqref{Pressure:Fld}.

The scheme~\eqref{pressure:discrete} is well-posed since 
$a_h(\cdot,\cdot)$ is coercive and continuous, the bilinear 
form $b_h(\cdot,\cdot)$ is continuous and satisfies a discrete 
inf-sup condition on the pair of functional spaces $\CRS_h$-$Q_h$ (see \cite{ZZMC19})
and $\curl\Vh = \widehat{\CRS}_h$. We summarize this fact in the following result. 
\begin{theorem}\label{lemma:dis:inf_sup}
	Let $b_h(\cdot,\cdot)$ be the discrete bilinear form defined in
	\eqref{eq:bh:def}.
	Then, there exists a strictly positive, real constant $C_b>0$ such
	that
	\begin{equation*}
		\underset{\vb_h\in \:\CRS_h\setminus\{\mathbf{0}\}}{\sup} \frac{b_h(\vb_h,q_h)}{|\vb_h|_{1,h}}
		\geq \beta\|q_h\|_{0,\O}	\qquad\forall q_h\in Q_h.
		\end{equation*}
	Moreover, there exist a unique $(\wb_h,p_h) \in \CRS_h \times Q_h$, solution of problem~\eqref{pressure:discrete}.
\end{theorem}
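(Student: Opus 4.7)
The plan has two parts: first I would establish the discrete inf--sup condition, then use it together with coercivity of $a_h$ to invoke Brezzi's saddle-point theorem.

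For the discrete inf--sup, I would follow the Fortin strategy already used in \cite{ZZMC19} and alluded to just before the statement. The idea is to build an operator $\mathcal{I}^F_h : \H \to \CRS_h$ (uniformly bounded in $h$) that preserves the pairing with $Q_h$, namely
\begin{equation*}
b_h(\vb - \mathcal{I}^F_h \vb, q_h) = 0 \qquad \forall \vb \in \H,\ \forall q_h \in Q_h.
\end{equation*}
Because $q_h|_\E$ is a constant, integration by parts on each polygon gives $(q_h, \div \vb)_{0,\E} = \sum_{e \subset \partial \E} q_h|_\E (\vb \cdot \nb_\E, 1)_{0,e}$, so the required commutativity reduces to preservation of the edge averages $(\vb, \boldsymbol{1})_{0,e}$. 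This is exactly the information carried by the degrees of freedom $\DU$ of $\CRS_h$, and therefore $\mathcal{I}^F_h$ can be defined through $\DU$ by $h_e^{-1}(\mathcal{I}^F_h \vb, \boldsymbol{1})_{0,e} := h_e^{-1}(\vb, \boldsymbol{1})_{0,e}$ for every $e \in \EE_h$. Uniform $H^1$-boundedness $|\mathcal{I}^F_h \vb|_{1,h} \leq C |\vb|_{1,\O}$ follows by scaling on each polygon (using assumptions ${\bf A_1}$--${\bf A_2}$ and property ${\bf P_1}$) together with the trace inequality $\|\vb\|_{0,e} \le C( h_\E^{-1/2} \|\vb\|_{0,\E} + h_\E^{1/2} |\vb|_{1,\E})$. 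Combining this Fortin operator with the classical continuous inf--sup for the Stokes divergence on $\H \times Q$, the standard Fortin lemma yields the stated inequality with a constant $\beta > 0$ independent of $h$.

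For the saddle-point well-posedness I would verify the hypotheses of Brezzi's theorem on the pair $\CRS_h \times Q_h$. Continuity of $a_h$ on $\CRS_h \times \CRS_h$ and continuity of $b_h$ are immediate from the boundedness of $\Pivec$, the stability bound on $S^{\E}_{\nablab}$ and the Cauchy--Schwarz inequality. Coercivity of $a_h$ with respect to the broken seminorm $|\cdot|_{1,h}$ follows from the stability assumption on $S^{\E}_{\nablab}$ and gives $a_h(\vb_h, \vb_h) \geq \min(1, c_{\#}) |\vb_h|_{1,h}^2$; since $|\cdot|_{1,h}$ is a norm on $\CRS_h$ by virtue of the zero-mean jump conditions and the boundary constraints encoded in \eqref{CR_Glob} (the usual Crouzeix--Raviart argument carries over here), this is coercivity in the relevant norm, in particular on the discrete kernel $\widehat{\CRS}_h$. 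Continuity of the right-hand side $\mathcal{F}_h(\psi_h, \fb)(\cdot)$ was already noted after \eqref{load:pressure:Dis}, using the stability of $\Pivec$, $\PiunoKb$ and $\PizeroK$ together with the embedding of Theorem~\ref{solobev:inclusion} applied to the discrete stream-function $\psi_h \in \Vh$. Brezzi's theorem then delivers the unique pair $(\wb_h, p_h) \in \CRS_h \times Q_h$.

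The main obstacle is the uniform $h$-boundedness of the Fortin operator $\mathcal{I}^F_h$: because the shape functions of $\CRS_h(\E)$ are virtual, one has no explicit basis and the control must come entirely from the degrees of freedom $\DU$ together with scaling arguments on the virtual triangulation $\CT^\E_h$ from property ${\bf P_1}$. This is precisely the technical step carried out in \cite{ZZMC19}, and since the paper is essentially reusing the Crouzeix--Raviart space constructed there, the cleanest write-up is to cite that reference for the inf--sup constant $\beta$ and then assemble the Brezzi argument in a short paragraph, as sketched above.
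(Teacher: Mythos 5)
Your proposal is correct and follows essentially the same route as the paper, which itself gives no independent proof but simply invokes the discrete inf--sup condition for the Crouzeix--Raviart-type pair $\CRS_h$--$Q_h$ established in \cite{ZZMC19} together with coercivity and continuity of $a_h$ and the Babu\v{s}ka--Brezzi theory. Your additional sketch of the Fortin operator (preservation of the edge averages carried by the degrees of freedom $\DU$, plus scaling on the virtual triangulation) is a faithful expansion of exactly the argument the paper delegates to that reference.
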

\subsubsection{Error estimate for the pressure scheme}
In this subsection we develop an abstract error result for the virtual scheme presented above. 
Moreover, we provide  error estimates involving some consistent errors. Finally, by combining 
these results  we derive an optimal error estimate for the pressure field.

First, we focus on deriving a bound on the difference between the
functional~\eqref{load:pressure:Dis} applied to the stream-function
$\psi$ solving the continuous variational
formulation~\eqref{weak:stream:NS} and its virtual
element approximation solving~\eqref{NSE:stream:disc}.

\begin{lemma}\label{lemma:difference:load}
Let $\psi\in\Phi$ and $\psi_h\in\Vh$ be the solution to problems~\eqref{weak:stream:NS} and~\eqref{NSE:stream:disc}, respectively. Moreover, let $\mathcal{F}_h(\psi,\fb)(\cdot)$ and $\mathcal{F}_h(\psi_h,\fb)(\cdot)$ be the functionals defined in~\eqref{load:pressure:Dis} (applied to $\psi$
and $\psi_h$, respectively). Then, there exists a real, positive constant $C_{\mathcal{F}_h}>0$,
independent of $h$, such that
\begin{equation*}
\big| \mathcal{F}_h(\psi,\fb)(\vb_h)-\mathcal{F}_h(\psi_h,\fb)(\vb_h) \big|
\leq C_{\mathcal{F}_h}\vert\psi-\psi_h\vert_{2,h}\vert\vb_h\vert_{1,h}.
\end{equation*}
\end{lemma}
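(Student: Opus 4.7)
The plan is to observe that the $\fb$-contribution in $\mathcal{F}_h(\psi,\fb)(\vb_h)-\mathcal{F}_h(\psi_h,\fb)(\vb_h)$ cancels, leaving a natural linear-plus-nonlinear splitting
\begin{equation*}
\mathcal{F}_h(\psi,\fb)(\vb_h)-\mathcal{F}_h(\psi_h,\fb)(\vb_h) = T_L + T_N,
\end{equation*}
where
\begin{equation*}
T_L := \sum_{\E\in\CT_h} a^{\E}\!\big(\Pivec \curl(\psi-\psi_h), \Pivec\vb_h\big),
\end{equation*}
and $T_N$ is the sum over $\E$ of
\[
\big((\nablab\PiunoKb\curl\psi)\PiunoKb\curl\psi - (\nablab\PiunoKb\curl\psi_h)\PiunoKb\curl\psi_h,\ \PizeroK\vb_h\big)_{0,\E}.
\]

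For $T_L$, I would apply the Cauchy--Schwarz inequality together with the $H^1$-stability of the projection $\Pivec$ (which follows immediately from the orthogonality defining it in \eqref{proj:H1:vec}) and the elementary bound $|\curl\phi|_{1,\E}\le|\phi|_{2,\E}$ to obtain $|T_L|\le C|\psi-\psi_h|_{2,h}|\vb_h|_{1,h}$, summing with a discrete Cauchy--Schwarz.

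For $T_N$, I would add and subtract $(\nablab\PiunoKb\curl\psi_h)\PiunoKb\curl\psi$ in each element integral, producing two bilinear-in-$(\psi-\psi_h)$ terms. Applying H\"older's inequality with exponents $(2,4,4)$ in each of them, the key factors are: $\|\nablab\PiunoKb\curl(\psi-\psi_h)\|_{0,\E}$, controlled by $|\psi-\psi_h|_{2,\E}$ via $H^1$-stability of $\PiunoKb$; $\|\PiunoKb\curl\psi\|_{L^4(\E)}$ and $\|\PiunoKb\curl\psi_h\|_{L^4(\E)}$, controlled by the $L^4$-stability \eqref{bound:proj} and then by $\|\psi\|_{2,\O}$ through the continuous embedding \eqref{sob:cont}, respectively by $|\psi_h|_{2,h}$ through the discrete embedding of Theorem~\ref{solobev:inclusion}; and finally $\|\PizeroK\vb_h\|_{L^4(\E)}$, bounded via \eqref{bound:proj} and a discrete Sobolev embedding for $\CRS_h$ into $L^4(\O)$ by $|\vb_h|_{1,h}$. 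Summing over $\E$ and absorbing $\|\psi\|_{2,\O}$ and $|\psi_h|_{2,h}$ through the continuous and discrete a priori bounds (Theorem~2.1 and Theorem~\ref{theodiscr}), one obtains $|T_N|\le C\|\fb\|_{0,\O}|\psi-\psi_h|_{2,h}|\vb_h|_{1,h}$. Combining yields the stated inequality with constant $C_{\mathcal{F}_h}$ depending on $\nu$, $\|\fb\|_{0,\O}$, the Sobolev constant $\CS$ and projector constants $\CBD$.

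The main obstacle is the $L^4$-control on $\vb_h\in\CRS_h$, since the embedding of Theorem~\ref{solobev:inclusion} is stated for the Morley-type space $\Vh$; the analogous embedding for the Crouzeix--Raviart-type space is standard and can be established by adapting the enriching-operator construction of Section~\ref{Enrich:operator} to $\CRS_h$ and its conforming counterpart, but it must be invoked or briefly justified. The remaining manipulations are routine combinations of H\"older, projector stability, and the already-established a priori bounds.
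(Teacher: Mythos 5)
Your proposal follows essentially the same route as the paper's proof: the same cancellation of the $\fb$-term and splitting into the $a^{\E}$-part and the nonlinear part, the same added-and-subtracted cross term $(\nablab\PiunoKb\curl\psi_h)\PiunoKb\curl\psi$, and the same combination of H\"older's inequality, projector stability, and the Sobolev embedding of Theorem~\ref{solobev:inclusion}, with $\|\psi\|_{2,\O}$ and $|\psi_h|_{2,h}$ absorbed into the constant. The obstacle you flag --- that controlling $\PizeroK\vb_h$ in $L^4$ by $|\vb_h|_{1,h}$ needs a discrete embedding for the Crouzeix--Raviart-type space, which Theorem~\ref{solobev:inclusion} (stated for $\Phi+\Vh$) does not literally cover --- is a fair point, but the paper's own proof passes over it silently under the blanket phrase ``stability properties of projectors,'' so your version is, if anything, the more careful one.
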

\begin{proof}
Upon employing the definition~\eqref{load:pressure:Dis}, we obtain
\begin{align*}
	& \big| \mathcal{F}_h(\psi,\fb)(\vb_h)-\mathcal{F}_h(\psi_h,\fb)(\vb_h) \big|
	\leq \nu\sum_{\E \in \CT_h} \big| a^{\E} (\Pivec\curl(\psi-\psi_h),\Pivec\vb_h) \big| \nonumber\\ 
	& +\sum_{\E \in \CT_h} \big|((\nablab\PiunoKb\curl\psi)\PiunoKb\curl \psi,\PizeroK\vb_h)_{0,\E}-((\nablab\PiunoKb\curl\psi_h)\PiunoKb\curl \psi_h,\PizeroK\vb_h)_{0,\E} \big|. 
	\end{align*}
Since $\Pivec$ is a continuous operator with respect to the
$H^{1}$-inner product, we bound the first term as follows
\begin{equation*}
	\nu\sum_{\E\in \CT_h}\big| a^{\E} (\Pivec\curl(\psi-\psi_h),\Pivec\vb_h) \big|
	\leq C\nu\vert\psi_h-\psi\vert_{2,h}\vert\vb_h\vert_{1,h}.
\end{equation*}
By adding and subtracting the term $((\nablab\PiunoKb\curl\psi_h)\PiunoKb\curl \psi,\PizeroK\vb_h)_{0,\E}$, applying the H\"{o}lder inequality and Theorem~\ref{solobev:inclusion},  along with stability properties of projectors $\Pivec$, $\PiunoKb$ and $\PizeroK$,, we obtain
\begin{equation*}
	\begin{split}
		&\sum_{\E \in \CT_h} \big| ((\nablab\PiunoKb\curl\psi)\PiunoKb\curl \psi,\PizeroK\vb_h)_{0,\E}-((\nablab\PiunoKb\curl\psi_h)\PiunoKb\curl \psi_h,\PizeroK\vb_h)_{0,\E} \big| \\
		&=\sum_{\E \in \CT_h} \big|  ((\nablab\PiunoKb\curl(\psi-\psi_h))\PiunoKb\curl \psi,\PizeroK\vb_h)_{0,\E} \big|+ \big| ((\PiunoKb\curl\psi_h)\PiunoKb\curl( \psi-\psi_h),\PizeroK\vb_h))_{0,\E} \big|\\
		&\leq C \vert \psi-\psi_h \vert_{2,h} \vert \psi \vert_{2,\O} \vert \vb_h \vert_{1,h}+C \vert\psi_h \vert_{2,h} \vert \psi-\psi_h \vert_{2,h} \vert \vb_h \vert_{1,h}.
	\end{split}
\end{equation*}
The  result follows by combining the above estimates.
\end{proof}

In continuation, we define the consistency error $\Theta_h(\cdot,\cdot)$ as follows: 
given $\psi \in \Phi$ the solution of problem~\eqref{weak:stream:NS}, we consider 
\begin{equation}\label{error:consis:pressure}
\Theta_h(\psi,\vb_h) :=\mathcal{F}_h(\psi,\fb)(\vb_h)-b_h(\vb_h,p)
	\qquad\forall\vb_h \in \CRS_h.
\end{equation}
We have the following abstract error estimate for the pressure recovery scheme.
\begin{theorem}\label{pre:error:pressure}
Let $\psi\in \Phi \cap H^{2+\gamma}(\Omega)$, with $\gamma\in(1/2,1]$ and $\psi_h \in \Vh$ 
be the solutions of problems~\eqref{weak:stream:NS} and \eqref{NSE:stream:disc}, respectively. 
Moreover, let $(\wb, p)\in \H\times Q$  and $(\wb_h, p_h)\in \CRS_h \times Q_{h}$ be the solutions of
problems~\eqref{pressure:mixed} and~\eqref{pressure:discrete}.		
	Then, there exists a strictly positive, real constant $C>0$,
	independent of $h$, such that
	\begin{equation}\label{abst:press:esti}
	\|p-p_h\|_{0,\O} \leq C \Big( \inf_{q_h \in Q_h} \|p-q_h\|_{0,\O} + \sup_{\substack{\vb_h \in \CRS_h \\ \vb_h\neq\, \0}} \frac{|\Theta(\psi,\vb_h)|}{|\vb_h|_{1,h}} + |\psi-\psi_h|_{2,h} \Big),
	\end{equation}
where $\Theta(\psi,\cdot)$ is the consistency error defined in \eqref{error:consis:pressure}. 
\end{theorem}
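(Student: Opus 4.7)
The plan is to follow the standard Strang-type argument for saddle-point problems, but with two twists: (i) the right-hand side involves the data-dependent functional $\mathcal{F}_h(\psi_h,\fb)(\cdot)$, so the consistency error $\Theta_h(\psi,\cdot)$ plus the load-difference estimate of Lemma~\ref{lemma:difference:load} must be used in tandem, and (ii) the auxiliary velocity $\wb_h$ must be controlled so that it does not appear on the right-hand side of~\eqref{abst:press:esti}. Throughout, I will use that $\wb=\boldsymbol{0}$ at the continuous level, and that $\wb_h\in\widehat{\CRS}_h$ thanks to the second equation of~\eqref{pressure:discrete} together with the definition~\eqref{eq:CR:SbSp}.

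First I would split $p-p_h=(p-q_h)-(p_h-q_h)$ for an arbitrary $q_h\in Q_h$, apply the triangle inequality, and use the discrete inf–sup condition of Theorem~\ref{lemma:dis:inf_sup} on $(p_h-q_h)\in Q_h$. Thus the proof reduces to estimating $b_h(\vb_h,p_h-q_h)$ for $\vb_h\in\CRS_h$. Using the first equation in~\eqref{pressure:discrete} and the identity $\mathcal{F}(\psi,\fb)(\vb)=b(\vb,p)$ (since $\wb=\boldsymbol{0}$), I would rewrite
\begin{equation*}
b_h(\vb_h,p_h-q_h)
=\bigl[\mathcal{F}_h(\psi_h,\fb)(\vb_h)-\mathcal{F}_h(\psi,\fb)(\vb_h)\bigr]
+\Theta_h(\psi,\vb_h)-a_h(\wb_h,\vb_h)+b_h(\vb_h,p-q_h),
\end{equation*}
using the very definition~\eqref{error:consis:pressure} of $\Theta_h$. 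The first bracket is bounded by $C_{\mathcal{F}_h}|\psi-\psi_h|_{2,h}|\vb_h|_{1,h}$ by Lemma~\ref{lemma:difference:load}; the last one by the continuity of $b_h(\cdot,\cdot)$ by $C\|p-q_h\|_{0,\O}|\vb_h|_{1,h}$.

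The main technical obstacle will be handling the term $a_h(\wb_h,\vb_h)$, since $\wb_h$ does not appear on the right-hand side of the claimed estimate. To absorb it, I will first bound $|\wb_h|_{1,h}$ independently. Taking $\vb_h=\wb_h\in\widehat{\CRS}_h$ in the first discrete equation, using $b_h(\wb_h,p_h)=0$ and the coercivity of $a_h(\cdot,\cdot)$ (inherited from the stability of its stabilization), one gets $|\wb_h|_{1,h}^2\le C\,\mathcal{F}_h(\psi_h,\fb)(\wb_h)$. Writing $\mathcal{F}_h(\psi_h,\fb)(\wb_h)=[\mathcal{F}_h(\psi_h,\fb)-\mathcal{F}_h(\psi,\fb)](\wb_h)+\Theta_h(\psi,\wb_h)+b_h(\wb_h,p)$ and using that $\wb_h\in\widehat{\CRS}_h$ so that $b_h(\wb_h,p)=b_h(\wb_h,p-q_h)$ for any $q_h\in Q_h$, together with Lemma~\ref{lemma:difference:load} and the continuity of $b_h(\cdot,\cdot)$, yields
\begin{equation*}
|\wb_h|_{1,h}\le C\Bigl(|\psi-\psi_h|_{2,h}+\sup_{\vb_h\in \CRS_h\setminus\{\boldsymbol{0}\}}\frac{|\Theta_h(\psi,\vb_h)|}{|\vb_h|_{1,h}}+\inf_{q_h\in Q_h}\|p-q_h\|_{0,\O}\Bigr).
\end{equation*}

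Plugging this into $|a_h(\wb_h,\vb_h)|\le C\,|\wb_h|_{1,h}|\vb_h|_{1,h}$, dividing by $|\vb_h|_{1,h}$, taking the supremum over $\vb_h$, and finally the infimum over $q_h\in Q_h$ in the triangle-inequality splitting, gives~\eqref{abst:press:esti}. The hidden constants absorb $\beta^{-1}$, the continuity constants of $a_h$ and $b_h$, and $C_{\mathcal{F}_h}$. The key point to be careful about is that the test function $\wb_h$ used to estimate $|\wb_h|_{1,h}$ is automatically discretely divergence-free, which is what allows to remove the pressure term $b_h(\wb_h,p_h)$ and to exchange $p$ with the best approximation $q_h$ in $Q_h$; this is precisely where the Stokes-complex property $\curl\,\Vh=\widehat{\CRS}_h$ from Lemma~\ref{lemma:stokes:complex} plays its structural role.
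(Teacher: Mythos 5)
Your proposal is correct and follows essentially the same route as the paper: the same identity linking $a_h(\wb_h,\vb_h)$, the load difference, $\Theta_h(\psi,\cdot)$ and $b_h(\vb_h,p-p_h)$, the same preliminary bound on $|\wb_h|_{1,h}$ obtained by testing with $\wb_h$ (using $b_h(\wb_h,p_h)=b_h(\wb_h,q_h)=0$ and coercivity), and the same final combination of the discrete inf--sup condition with the triangle inequality.
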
 

\begin{proof}
Adding and subtracting adequate terms in \eqref{pressure:discrete}, for each  $\vb_h \in \CRS_h$ we have 
\begin{equation}\label{equa:pressure}
\begin{split}
a_h(\wb_h,\vb_h )&= \mathcal{F}_h(\psi_h,\fb)(\vb_h)-b_h(\vb_h,p_h) \\
 &=\mathcal{F}_h(\psi_h,\fb)(\vb_h)- \mathcal{F}_h(\psi,\fb)(\vb_h)
 +\mathcal{F}_h(\psi,\fb)(\vb_h)- b_h(\vb_h, p) + b_h(\vb_h, p-p_h)\\
&=(\mathcal{F}_h(\psi_h,\fb)(\vb_h)- \mathcal{F}_h(\psi,\fb)(\vb_h))
+\Theta(\psi,\vb_h) + b_h(\vb_h, p-p_h).
\end{split}
\end{equation}
Taking $\vb_h = \wb_h$ in \eqref{equa:pressure}, then by using the fact that  
$b_h(\wb_h, q_h) = b_h(\wb_h, p_h) = 0 \quad \forall q_h \in Q_h$, the continuity of $b_h(\cdot,\cdot)$ and Lemma~\ref{lemma:difference:load}, we get 
\begin{equation}\label{ineq:wh}
|\wb_h|_{1,h} \leq C \Big(|\psi-\psi_h|_{2,h} + \|p-q_h\|_{0,\O} + \sup_{\substack{\vb_h \in \CRS_h \\ \vb_h\neq\, \0}} \frac{|\Theta(\psi,\vb_h)|}{|\vb_h|_{1,h}} \Big). 	
\end{equation}
By using again \eqref{equa:pressure} and the linearity of $b_h(\cdot,\cdot)$, for all $q_h \in Q_h$
 we have 
\begin{equation*}
\begin{split}
b_h(\vb_h, q_h - p_h) &= b_h(\vb_h, q_h- p) + b_h(\vb_h, p- p_h)\\ 
&=  b_h(\vb_h, q_h- p)+a_h(\wb_h,\vb_h )- (\mathcal{F}_h(\psi_h,\fb)(\vb_h)- \mathcal{F}_h(\psi,\fb)(\vb_h))
-\Theta(\psi,\vb_h).
\end{split}
\end{equation*}
Thus, by using the two last estimate above,  Lemma~\ref{lemma:difference:load}, the inf-sup condition in Lemma~\ref{lemma:dis:inf_sup}, we obtain
\begin{equation*}
\beta\| q_h - p_h\|_{0,\O}	\leq  C \Big( \|p-q_h\|_{0,\O} + |\wb_h|_{1,h}  +\sup_{\substack{\vb_h \in \CRS_h \\ \vb_h\neq\, \0}} \frac{|\Theta(\psi,\vb_h)|}{|\vb_h|_{1,h}}  \Big).
\end{equation*}
The desired result follows from the triangle inequality, the above estimate and \eqref{ineq:wh}. 
\end{proof}

\begin{lemma}\label{lemma:Pressure:VC:bound}
Let $\psi\in \Phi \cap H^{2+\gamma}(\Omega)$, $\gamma\in(1/2,1]$, be the solution of 
	problem~\eqref{weak:stream:NS}. Then, there exists a strictly positive, real constant $C>0$,
	independent of $h$, such that
	\begin{equation*}
		|\Theta_h(\psi,\vb_h)| \leq Ch^{\gamma}(\|p\|_{\gamma,\Omega} + \|\psi\|_{2+\gamma,\Omega} 
		+ \|\fb\|_{0,\Omega})|\vb_h|_{1,h}
		\qquad\forall\vb_h \in\CRS_h.
	\end{equation*}
\end{lemma}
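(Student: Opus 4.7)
The plan is to split $\Theta_h(\psi,\vb_h)$ into four contributions capturing, in order, the viscous projection crime, the nonlinear convective mismatch, the right-hand side projection crime, and a purely analytical piece to be treated by the strong form of the momentum equation plus a nonconforming jump residual. Setting $\ub := \curl\psi \in H^{1+\gamma}(\O)^2 \cap \H$ and adding/subtracting the exact counterpart of each projection in \eqref{load:pressure:Dis} gives
$$\Theta_h(\psi,\vb_h) = \mathrm{I}_1 + \mathrm{I}_2 + \mathrm{I}_3 + \mathrm{I}_4,$$
where
\begin{align*}
\mathrm{I}_1 &:= \nu\sum_{\E\in\CT_h}\bigl[a^{\E}(\Pivec\ub,\Pivec\vb_h) - a^{\E}(\ub,\vb_h)\bigr],\\
\mathrm{I}_2 &:= \sum_{\E\in\CT_h}\bigl[\bigl((\nablab\PiunoKb\ub)\PiunoKb\ub,\PizeroK\vb_h\bigr)_{0,\E} - \bigl((\nablab\ub)\ub,\vb_h\bigr)_{0,\E}\bigr],\\
\mathrm{I}_3 &:= \sum_{\E\in\CT_h}(\fb,\vb_h-\PizeroK\vb_h)_{0,\E},\\
\mathrm{I}_4 &:= \sum_{\E\in\CT_h}\bigl[\nu a^{\E}(\ub,\vb_h) + \bigl((\nablab\ub)\ub,\vb_h\bigr)_{0,\E} - (\fb,\vb_h)_{0,\E} - (p,\div\vb_h)_{0,\E}\bigr].
\end{align*}

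For $\mathrm{I}_1$ I will exploit the defining identity \eqref{proj:H1:vec} of $\Pivec$, which says that $\nablab\Pivec$ acts on each element as the $L^2$-projection of the gradient onto constants; a short algebraic manipulation reduces $\mathrm{I}_1$ to $\nu\sum_{\E}\int_{\E}(\nablab\ub - \PizeroK\nablab\ub):(\nablab\vb_h - \PizeroK\nablab\vb_h)$, bounded by $C\nu h^{\gamma}\|\ub\|_{1+\gamma,\O}|\vb_h|_{1,h}$ via Bramble--Hilbert. For $\mathrm{I}_3$, orthogonality of $\PizeroK$ and a Poincaré-type scaled estimate on each element yield $|\mathrm{I}_3| \leq Ch\|\fb\|_{0,\O}|\vb_h|_{1,h} \leq Ch^{\gamma}\|\fb\|_{0,\O}|\vb_h|_{1,h}$ since $\gamma\leq 1$. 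The nonlinear $\mathrm{I}_2$ requires three successive insertions, swapping one projection at a time, to decompose it as a sum of convective ``commutators'' whose individual factors are controlled by H\"older's inequality, the Sobolev embedding $H^{1+\gamma}(\E) \hookrightarrow W^{1,4}(\E)$, and the stability and approximation properties of $\PiunoKb$ and $\PizeroK$, delivering a bound of the form $Ch^{\gamma}\|\ub\|_{1+\gamma,\O}^{2}|\vb_h|_{1,h}$.

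For $\mathrm{I}_4$, an elementwise integration by parts in the viscous and pressure terms produces
$$\mathrm{I}_4 = \sum_{\E\in\CT_h}\bigl(-\nu\Deltab\ub + (\nablab\ub)\ub + \nabla p - \fb,\vb_h\bigr)_{0,\E} + \sum_{e\in\EE_h}\int_e (\nu\nablab\ub\cdot\nb_e - p\,\nb_e)\cdot\jump{\vb_h}\,\ds.$$
The bulk integrand vanishes by the strong form of the Navier--Stokes momentum equation in \eqref{NSE:velocity:pressure}. On each edge $e$ the jump $\jump{\vb_h}$ has zero mean by the very definition \eqref{CR_Glob} of $\CRS_h$, so I may subtract the edge mean of $\nu\nablab\ub\cdot\nb_e - p\,\nb_e$; a scaled trace inequality together with polynomial approximation on $e$ then yields $|\mathrm{I}_4| \leq Ch^{\gamma}\bigl(\nu\|\ub\|_{1+\gamma,\O} + \|p\|_{\gamma,\O}\bigr)|\vb_h|_{1,h}$. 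Collecting the four bounds and using $\|\ub\|_{1+\gamma,\O} \leq C\|\psi\|_{2+\gamma,\O}$ delivers the claim. The principal obstacle will be the nonlinear term $\mathrm{I}_2$: the triple splitting must be arranged so that every factor is measured in a norm compatible with the available regularity $\psi \in H^{2+\gamma}$ through a single use of Sobolev embedding, without implicitly demanding anything stronger. A subsidiary subtlety is that $\vb_h$ has virtual (non-polynomial) traces, so the edge analysis in $\mathrm{I}_4$ must rely on an elementwise trace inequality for $\CRS_h(\E)$ rather than an explicit polynomial expansion.
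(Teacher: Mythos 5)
Your proposal is correct and follows essentially the route the paper intends: the paper's own proof is only a one-line appeal to the definition of $\Theta_h$, the zero-mean edge jumps of $\CRS_h$, the standard nonconforming Stokes consistency argument of \cite[Theorem~13]{ZZMC19}, and the real method of interpolation for the fractional-order approximation estimates --- precisely the ingredients your four-term splitting makes explicit ($\mathrm{I}_1$--$\mathrm{I}_3$ as projection errors handled by orthogonality and Bramble--Hilbert/interpolation, $\mathrm{I}_4$ by elementwise integration by parts, the strong momentum equation, and the mean-zero jumps). The only cosmetic point is that $\mathrm{I}_2$ naturally yields a factor $\|\psi\|_{2+\gamma,\O}^2$, which must be absorbed into the constant (or reduced to a single factor via $\|\psi\|_{2+\gamma,\O}\leq C_{\reg}\|\fb\|_{0,\O}$) to match the stated linear dependence on the data.
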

\begin{proof}
By using the definition of the consistency term $\Theta_h(\psi,\cdot)$ (cf.~\eqref{error:consis:pressure}),
the weak continuity of the discrete function of the Crouzeix-Raviart space on edges, and employing standard arguments as \cite[Theorem~13]{ZZMC19}, together with the real method of interpolation, we can obtain the required result.
\end{proof}

Finally, the next result  provide the rate of convergent for our pressure VE scheme.
\begin{theorem}
Under same assumptions of Theorem~\ref{pre:error:pressure}, 
for $p\in  Q\cap H^{\gamma}(\Omega)$, there exists $C>0$, independent of $h$, such that
\begin{equation*}
\|p-p_h\|_{0,\Omega} \leq Ch^{\gamma}( \|p\|_{\gamma,\Omega} + \|\psi\|_{2+\gamma,\Omega} 
+ \|\fb\|_{0,\Omega}).
\end{equation*}
\end{theorem}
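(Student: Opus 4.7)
The plan is to directly combine the abstract error estimate of Theorem~\ref{pre:error:pressure} with the convergence rate for the stream-function from Theorem~\ref{Theo-conv} and the consistency bound of Lemma~\ref{lemma:Pressure:VC:bound}, together with a standard approximation estimate for piecewise constants on the pressure space $Q_h$. The abstract bound \eqref{abst:press:esti} splits $\|p-p_h\|_{0,\Omega}$ into three terms, and each of them is already effectively controlled by earlier results.

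First, I would estimate the best-approximation term $\inf_{q_h\in Q_h}\|p-q_h\|_{0,\Omega}$. Since $Q_h$ is the space of piecewise constants on $\CT_h$ (cf.\ \eqref{Pressure:Fld}), choosing $q_h$ as the element-wise $L^2$ projection $\Pi^{0}_{\E}p$ on each polygon and applying the Bramble-Hilbert-type approximation estimate (as in Proposition~\ref{approx:poly}, restricted to $\ell=0$ and polynomial degree zero) yields, for $p\in H^{\gamma}(\Omega)$ with $\gamma\in(1/2,1]$,
\begin{equation*}
\inf_{q_h\in Q_h}\|p-q_h\|_{0,\Omega}\leq Ch^{\gamma}\|p\|_{\gamma,\Omega}.
\end{equation*}

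Next, I would bound the consistency supremum using Lemma~\ref{lemma:Pressure:VC:bound}, which directly gives
\begin{equation*}
\sup_{\vb_h\in\CRS_h\setminus\{\mathbf{0}\}}\frac{|\Theta_h(\psi,\vb_h)|}{|\vb_h|_{1,h}} \leq Ch^{\gamma}\bigl(\|p\|_{\gamma,\Omega}+\|\psi\|_{2+\gamma,\Omega}+\|\fb\|_{0,\Omega}\bigr),
\end{equation*}
while the remaining stream-function error term is controlled by Theorem~\ref{Theo-conv}, giving $|\psi-\psi_h|_{2,h}\leq Ch^{\gamma}(\|\psi\|_{2+\gamma,\Omega}+\|\fb\|_{0,\Omega})$. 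Inserting these three estimates into \eqref{abst:press:esti} and collecting terms yields the desired bound.

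There is no real obstacle here: the theorem is essentially a corollary of the preceding results, and the proof amounts to a one-line combination. The only minor point that requires attention is verifying that the regularity $p\in H^{\gamma}(\Omega)$ is consistent with the regularity used in Lemma~\ref{lemma:Pressure:VC:bound}, and that the constant obtained remains independent of $h$, which is clear since all three ingredients provide $h$-independent constants under the standing mesh assumptions ${\bf A_1}$ and ${\bf A_2}$.
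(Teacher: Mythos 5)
Your proposal is correct and follows exactly the paper's own argument: the paper likewise proves the theorem by inserting $q_h=\Pi^0_{\E}p$ into the abstract estimate of Theorem~\ref{pre:error:pressure} and invoking Lemma~\ref{lemma:Pressure:VC:bound} and Theorem~\ref{Theo-conv}. No issues to report.
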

\begin{proof}
The demonstration follows from \eqref{abst:press:esti}, taking $q_h =\Pi_\E^{0} p$ in  Theorem~\ref{pre:error:pressure}, Lemma~\ref{lemma:Pressure:VC:bound} and Theorem~\ref{Theo-conv}. 

\end{proof}
\begin{remark}
We recall that if we are interesting to approximate only the main unknown of problem~\eqref{weak:stream:NS}, 
we can consider the Morley-type VE introduced in \cite[Subsection 3.2]{AMS2022-CI2MA}, 
avoiding the construction of the Stokes complex sequence. Moreover, we are able to recover 
the velocity and vorticity fields  by using the postprocessing of subsections~\ref{rec:velocity} 
and \ref{rec:vorticity}, and obtain the theoretical analysis presented here. 
However, the pressure recovery would not be available. Thus, we point out
that the main advantage to used Stokes complex sequence associated to $\Vh$  and $\CRS_h$ is that 
we can additionally compute  the pressure field from the discrete stream-function, 
with optimal rate of convergence, making the suitable setting.
\end{remark}

\section{Numerical result}\label{numeric_Result}
\setcounter{equation}{0}
In this section, we present four numerical experiments to test the practical performance 
of the proposed virtual element discretization~\eqref{NSE:stream:disc} and assess the 
theoretical predictions as estimated in Sections~\ref{Error:Analisis} and~\ref{rec:fields}. 
We first approximate the discrete stream-function $\psi$ by employing Morley-type VE space 
\eqref{global:space:fnc}, and then we recovered other fields of interest such as velocity, 
and vorticity by employing suitable projection operators. Further, we recover the pressure 
variable by solving a saddle point problem, where the velocity space are in Stokes complex
relationship with the stream-function space (cf. Section~\ref{rec:pressure}).
In each test to solve the nonlinear system resulting from~\eqref{NSE:stream:disc}, we apply 
the Newton method, with a fixed tolerance of ${\rm Tol}=10^{-8}$ and the initial guess
is given by $\psi_h^{{\rm in}}=0$.
  
We have tested the method by using different families of meshes such as: 
\begin{itemize}
\item $\CT^1_h$: Square meshes; 
\item $\CT^2_h$: Triangular meshes; 
\item
 $\CT^3_h$: Sequence of CVT (Centroidal Voronoi Tessellation); 
 \item  $\CT^4_h$: Trapezoidal meshes,
\end{itemize} 
which are posted in Figure~\ref{mesh_PolyGon}. We quantify the errors by employing the projection operators:
$\PiK$, $\PiunoKb$, and $\PimoK$. The following formulations are used for the computation of  experimental 
errors:
\begin{equation}\label{errors}
\begin{split}
&\mathcal{E}_{i}(\psi):= \Big( \sum_{\E\in \CTh}|\psi-\PiK\psi_h|_{i,\E}^2 \Big)^{1/2} 
\quad \forall i \in \{ 0,1,2 \},\quad \mathcal{E}_{j}(\ub):= 
\Big( \sum_{\E\in \CTh}|\ub-\PiunoKb \curl \psi_h|_{j,\E}^2 \Big)^{1/2} \quad  \forall j\in \{ 0,1\}; \\
& \quad \mathcal{E}_0(\omega):= \Big( \sum_{\E\in \CTh}\|\omega-\PimoK \Delta \psi_h\|_{0,\E}^2 \Big)^{1/2}, \quad \mathcal{E}_0(p):= \Big( \sum_{\E\in \CTh}\|p-p_h \|_{0,\E}^2 \Big)^{1/2}.
\end{split}
\end{equation}
Furthermore, we let $\mathcal{R}_i(\chi)$, where $\chi\in \{ \ub,\psi, \omega \}$, and $i \in \{0,1,2\}$ denotes the rates of convergence of the approximate solutions in $H^2$-, $H^1$- and $L^2$-norms.

\begin{figure}[h!]
	\begin{center}
\includegraphics[height=3.5cm, width=3.5cm]{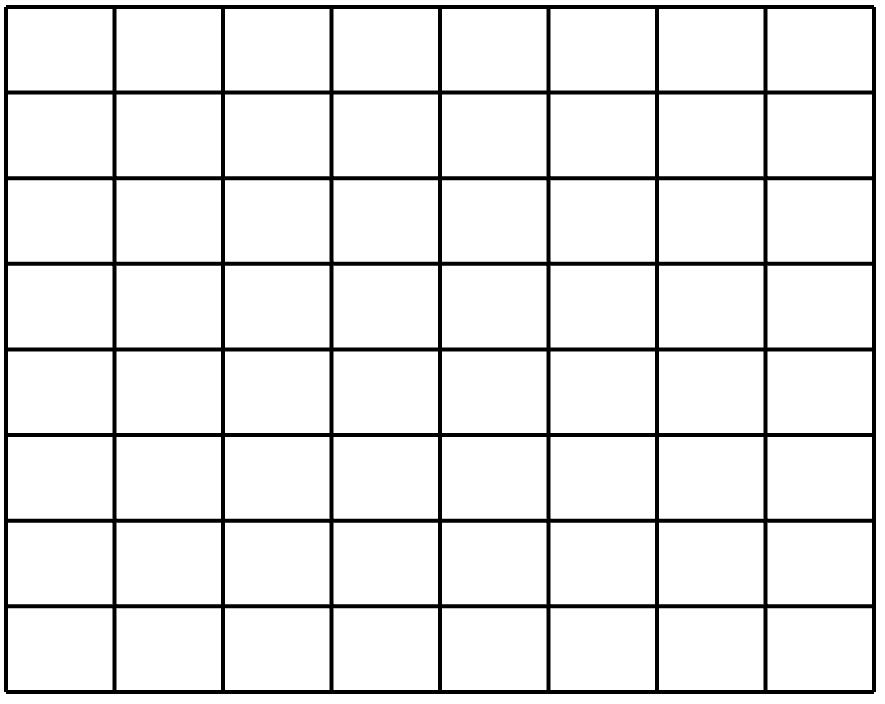} 
\includegraphics[height=3.5cm, width=3.5cm]{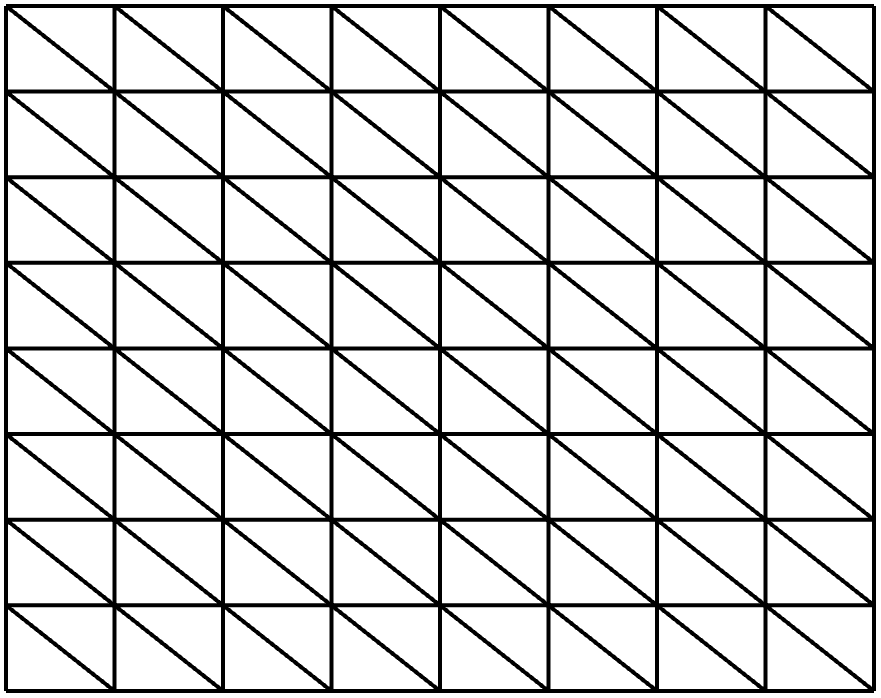}  
\includegraphics[height=3.5cm, width=3.5cm]{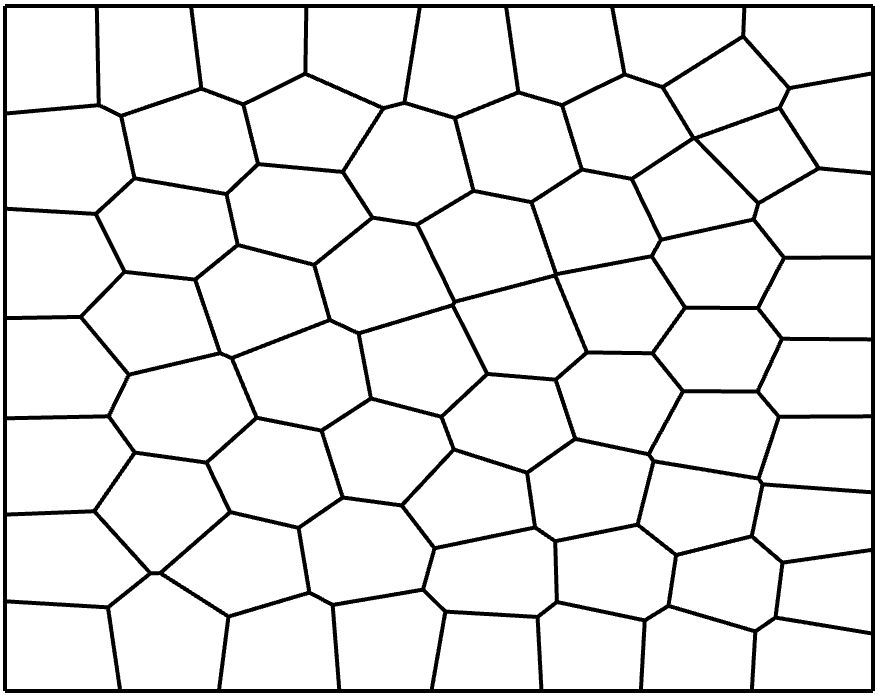} 
\includegraphics[height=3.5cm, width=3.5cm]{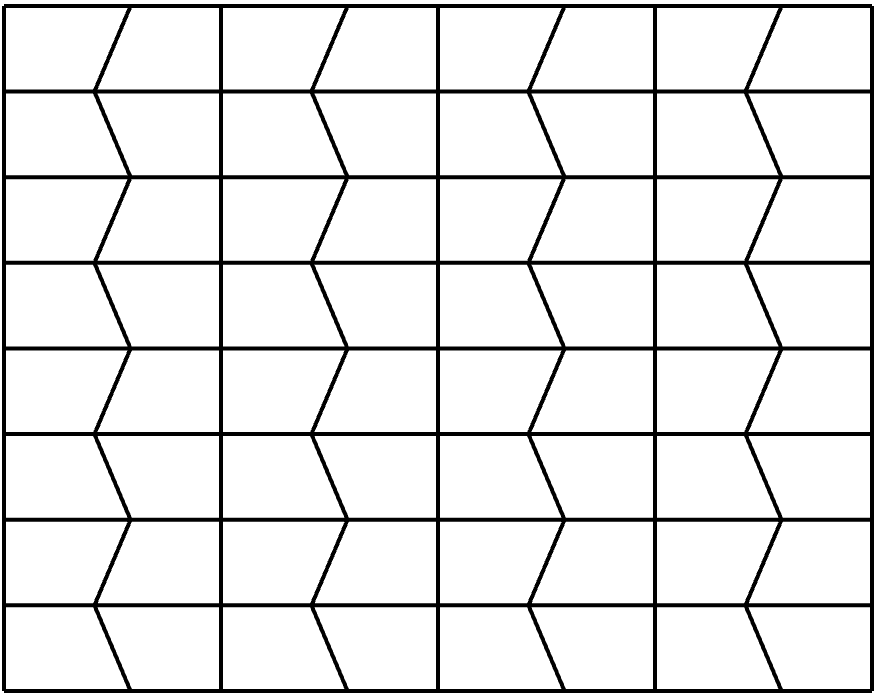}
	\end{center}
	\caption{Sample meshes. $\CT^1_h$, $\CT^2_h$, $\CT^3_h$  and $\CT^4_h$ (from left to right).}
	\label{mesh_PolyGon} 
\end{figure}

\subsection{Test 1. Kovasznay flow} \label{test:1}
In this numerical test, we solve the Navier-Stokes 
problem~\eqref{NSE:velocity:pressure} on the square domain $\overline{\O} := [0,1]^2$.
We take the load term $\fb$ and boundary conditions
in such a way that the analytical solution is given by the Kovasznay solution:
\begin{equation*}
\begin{split}
&\bu(x,y)=
\begin{pmatrix}
1-\exp(\lambda x) \cos(2 \pi y)\\
\frac{\lambda}{2 \pi} \exp(\lambda x) \sin(2 \pi y)
\end{pmatrix}, \quad 
 \psi(x,y)=y-\frac{1}{2 \pi} \exp(\lambda x) \sin(2 \pi y),\\
& p(x,y)=-(1/2) \exp(2 \lambda x)+\bar{p}, \quad \quad \quad  \omega= \Big( \frac{\lambda^2-4\pi^2}{2 \pi}\Big) \exp(\lambda x) \sin(2 \pi y),
\end{split}
\end{equation*}
where  $\lambda=\frac{Re}{2}-\Big( \frac{Re^2}{4}+4 \pi^2\Big)^{1/2}$, and $Re=\nu^{-1}$. We have computed the discrete stream-function for different choice of viscosity coefficients, e.g., $\nu=1, 0.01$, and errors for the stream-function
(cf.~\eqref{errors}) are posted in Figure~\ref{fig:Exampele1:psi:nu1}, and Figure~\ref{fig:Exampele1:psi:nu.01}, respectively. Further, by employing the formulas \eqref{discrete:velocity} and \eqref{discrete:vorticity}, we have recovered discrete velocity and vorticity fields for $\nu=1,0.01$. The error curves of the velocity and vorticity are posted in Figure~\ref{fig:Exampele1:velvort:nu1}, and Figure~\ref{fig:Exampele1:velvort:nu0.01}, while the  error  curves for the pressure are posted in Figure~\ref{fig:Example1:p} for both values of  $\nu$. 
Besides, for all the meshes the maximum number of iterations that are required to achieve the tolerance in the Newton method is $4$ for $\nu=1$ and $6$ for $\nu=0.01$.
 
In Figure~\ref{example1_psi_Vel_Vort_p}, we have posted the discrete stream-function and pressure fields for $\nu=1$, using the mesh $\CT^1_h$, with $h=1/32$.  

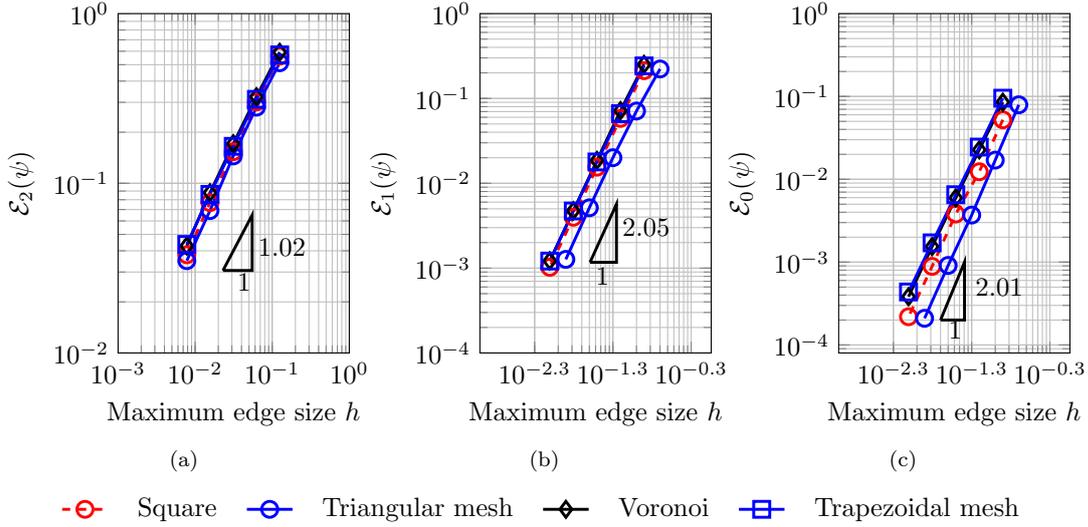
\begin{figure}[h!]
	\centering
	\newlength\figureheight 
	\newlength\figurewidth
	\setlength\figureheight{4.5cm} 
	\setlength\figurewidth{3.2cm} 
	\subfigure[]{
%
%
\begin{tikzpicture}

\begin{axis}[%
width=0.95092\figurewidth,
height=\figureheight,
at={(0\figurewidth,0\figureheight)},
scale only axis,
xmode=log,
xmin=0.001,
xmax=1,
xtick={0.001,  0.01,    0.1, 1},
xminorticks=true,
xmajorgrids,
xminorgrids,
xlabel={Maximum edge size $h$},
ymode=log,
ymin=0.01,
ymax=1,
ytick={ 1e-05, 0.0001,  0.001,   0.01,    0.1,      1},
yminorticks=true,
ymajorgrids,
yminorgrids,
ylabel={$\mathcal{E}_2(\psi)$}, 
legend style={legend cell align=left,align=left,draw=white!15!black},
legend pos = south east
]
\addplot [color=red,dashed,line width=1.1pt,mark=o,mark size=3pt,mark options={solid}]
  table[row sep=crcr]{%
0.1250 .56138285 \\  
0.0625  .301331189\\
0.0313 .1534408722\\
0.0156  .07686947\\
.007812 .037905597 \\
};


\addplot [color=blue,solid,mark=o,mark options={solid},line width=1.1pt,mark size=3pt]
  table[row sep=crcr]{%
0.1250  0.5130439566 \\
.0625  0.28081771661\\
.0313 0.1449009136\\
0.0156  0.0690190564\\
.007812 .034991263\\
};

\addplot [color=black,solid,mark=diamond,mark options={solid},line width=1.1pt,mark size=3pt]
  table[row sep=crcr]{%
0.1250 .59140219\\
0.0625 0.3235838457\\
0.0313 0.171016981277\\
0.0156 0.087912461522\\
.007812 .0427542453\\
};

\addplot [color=blue,solid, mark=square, mark options={solid},line width=1.1pt,mark size=3pt]
  table[row sep=crcr]{%
0.1250  0.57091832\\
0.0625 0.312376160\\
0.0313 0.1650936186\\
0.0156 0.086052222\\
.007812 0.04362673333\\
};

\addplot [color=black,solid,forget plot,line width=1.1pt]
  table[row sep=crcr]{%
0.023	0.030626\\
0.055	0.030626\\
0.055	0.06500\\
0.023	0.030626\\
};
\node[right, align=left, inner sep=0mm, text=black]
at (axis cs:0.035000,0.026,0) {$1$};
\node[right, align=left, inner sep=0mm, text=black]
at (axis cs:0.06500,.042,0) {$1.02$};
\end{axis}
\end{tikzpicture}
	\subfigure[]{
%
%
\begin{tikzpicture}

\begin{axis}[%
width=0.95092\figurewidth,
height=\figureheight,
at={(0\figurewidth,0\figureheight)},
scale only axis,
xmode=log,
xmin=0.001,
xmax=0.9,
xtick={0.005,  0.05,    0.5},
xminorticks=true,
xmajorgrids,
xminorgrids,
xlabel={Maximum edge size $h$},
ymode=log,
ymin=0.0001,
ymax=1,
ytick={ 1e-05, 0.0001,  0.001,   0.01,    0.1,      1},
yminorticks=true,
ymajorgrids,
yminorgrids,
ylabel={$\mathcal{E}_1(\psi)$}, 
legend style={legend cell align=left,align=left,draw=white!15!black},
legend pos = south east
]
\addplot [color=red,dashed,line width=1.1pt,mark=o,mark size=3pt,mark options={solid}]
  table[row sep=crcr]{%
0.1250 0.2123022 \\  
0.0625  0.058146642\\
0.0313 0.015477051\\
0.0156   0.003978042\\
0.0078  0.001022469481\\
};


\addplot [color=blue,solid,mark=o,mark options={solid},line width=1.1pt,mark size=3pt]
  table[row sep=crcr]{%
0.2000  0.222342960 \\
.1000  0.071002123\\
.0500 0.0200112382\\
0.0250  0.0051434576\\
0.0125 0.0012681615\\	
};

\addplot [color=black,solid,mark=diamond,mark options={solid},line width=1.1pt,mark size=3pt]
  table[row sep=crcr]{%
0.1250 0.25189015\\
0.0625 0.0718367858\\
0.0313 0.0185925261\\
0.0156 0.00474579\\
0.0078 0.00121980530\\
};

\addplot [color=blue,solid,mark=square,mark options={solid},line width=1.1pt,mark size=3pt]
  table[row sep=crcr]{%
0.1250  0.24171301\\
0.0625 0.066126256\\
0.0313 0.0178413306\\
0.0156 0.00471465\\
0.0078 0.0012117997473 \\
};

\addplot [color=black,solid,forget plot,line width=1.1pt]
  table[row sep=crcr]{%
0.0255	0.00117\\
0.0555	0.00117\\
0.0555	0.00567\\
0.0255	0.00117\\
};
\node[right, align=left, inner sep=0mm, text=black]
at (axis cs:0.0300,0.00080,0) {$1$};
\node[right, align=left, inner sep=0mm, text=black]
at (axis cs:0.0650,.003000,0) {$2.05$};
\end{axis}
\end{tikzpicture}
	\subfigure[]{
%
%
\begin{tikzpicture}

\begin{axis}[%
width=0.95092\figurewidth,
height=\figureheight,
at={(0\figurewidth,0\figureheight)},
scale only axis,
xmode=log,
xmin=0.001,
xmax=0.9,
xtick={0.005,  0.05,    0.5},
xminorticks=true,
xmajorgrids,
xminorgrids,
xlabel={Maximum edge size $h$},
ymode=log,
ymin=0.00008,
ymax=1,
ytick={ 1e-05, 0.0001,  0.001,   0.01,    0.1,      1},
yminorticks=true,
ymajorgrids,
yminorgrids,
ylabel={$\mathcal{E}_0(\psi)$}, 
legend style={legend cell align=left,align=left,draw=white!15!black},
legend pos = south east
]
\addplot [color=red,dashed,line width=1.1pt,mark=o,mark size=3pt,mark options={solid}]
  table[row sep=crcr]{%
0.1250 .0522152119 \\  
0.0625  0.0123186358\\
0.0313 0.0038076383\\
0.0156   8.8816303e-4\\
0.0078 2.18983858e-04\\
};


\addplot [color=blue,solid,mark=o,mark options={solid},line width=1.1pt,mark size=3pt]
  table[row sep=crcr]{%
0.2000  0.0792090291 \\
.1000  0.017027108\\
.0500 0.0036986012\\
0.0250  9.1192007020e-4\\
0.0125  2.09784396e-4\\	
};

\addplot [color=black,solid,mark=diamond,mark options={solid},line width=1.1pt,mark size=3pt]
  table[row sep=crcr]{%
0.1250 0.0851713\\
0.0625 0.022663448\\
0.0313 0.0059475498\\
0.0156 0.00153932244\\
0.0078 3.8217239e-4\\
};

\addplot [color=blue,solid,mark=square,mark options={solid},line width=1.1pt,mark size=3pt]
  table[row sep=crcr]{%
0.1250  0.0951887\\
0.0625 0.0244662046\\
0.0313 0.0065102746\\
0.0156 0.0017084859\\
0.0078 4.360962270e-4 \\
};

\addplot [color=black,solid,forget plot,line width=1.1pt]
  table[row sep=crcr]{%
0.0200	0.0002\\
0.0405	0.0002\\
0.0405	0.0010\\
0.0200	0.0002\\
};
\node[right, align=left, inner sep=0mm, text=black]
at (axis cs:0.025,0.00015,0) {$1$};
\node[right, align=left, inner sep=0mm, text=black]
at (axis cs:0.05500,.000500,0) {$2.01$};
\end{axis}
\end{tikzpicture}
	\begin{tikzpicture}
		\begin{customlegend}[legend columns=5,legend style={align=center,draw=none,column sep=2ex},
			legend entries={Square, Triangular mesh,  Voronoi, Trapezoidal mesh}]
			\addlegendimage{color=red,dashed,line width=1.1pt,mark=o,mark size=3pt,mark options={solid}}
			\addlegendimage{color=blue,solid,mark=o,mark options={solid},line width=1.1pt,mark size=3pt}   
			\addlegendimage{color=black,solid,mark=diamond,mark options={solid},line width=1.1pt,mark size=3pt}
			\addlegendimage{color=blue,solid,mark=square,mark options={solid},line width=1.1pt,mark size=3pt}
		\end{customlegend}
	\end{tikzpicture}
\caption{Test 1: Convergence of the stream-function $\psi$ in broken $H^2$-, $H^1$- and $L^2$-norms
with mesh refinement for different types of meshes, using  $\nu=1$.}
\label{fig:Exampele1:psi:nu1}
\end{figure} 

\begin{figure}[h!]
	\centering
	\setlength\figureheight{4.5cm} 
	\setlength\figurewidth{3.2cm}
	\subfigure[]{
%
%
\begin{tikzpicture}

\begin{axis}[%
width=0.95092\figurewidth,
height=\figureheight,
at={(0\figurewidth,0\figureheight)},
scale only axis,
xmode=log,
xmin=0.001,
xmax=1,
xtick={0.001,  0.01,    0.1, 1},
xminorticks=true,
xmajorgrids,
xminorgrids,
xlabel={Maximum edge size $h$},
ymode=log,
ymin=0.01,
ymax=1,
ytick={ 1e-05, 0.0001,  0.001,   0.01,    0.1,      1},
yminorticks=true,
ymajorgrids,
yminorgrids,
ylabel={$\mathcal{E}_1(\ub)$}, 
legend style={legend cell align=left,align=left,draw=white!15!black},
legend pos = south east
]
\addplot [color=red,dashed,line width=1.1pt,mark=o,mark size=3pt,mark options={solid}]
  table[row sep=crcr]{%
0.1250 .56138285 \\  
0.0625  .301331189\\
0.0313 .1534408722\\
0.0156  .07686947\\
.007812 .037905597 \\
};


\addplot [color=blue,solid,mark=o,mark options={solid},line width=1.1pt,mark size=3pt]
  table[row sep=crcr]{%
0.2000  0.5130439566 \\
.1000  0.28081771661\\
.0500 0.1449009136\\
0.0250  0.0690190564\\
.012500 .034991263\\
};

\addplot [color=black,solid,mark=diamond,mark options={solid},line width=1.1pt,mark size=3pt]
  table[row sep=crcr]{%
0.1250 .59140219\\
0.0625 0.3235838457\\
0.0313 0.171016981277\\
0.0156 0.087912461522\\
.007812 .0427542453\\
};

\addplot [color=blue,solid, mark=square, mark options={solid},line width=1.1pt,mark size=3pt]
  table[row sep=crcr]{%
0.1250  0.57091832\\
0.0625 0.312376160\\
0.0313 0.1650936186\\
0.0156 0.086052222\\
.007812 0.04362673333\\
};

\addplot [color=black,solid,forget plot,line width=1.1pt]
  table[row sep=crcr]{%
0.023	0.030626\\
0.055	0.030626\\
0.055	0.06500\\
0.023	0.030626\\
};
\node[right, align=left, inner sep=0mm, text=black]
at (axis cs:0.0290,0.024,0) {$1$};
\node[right, align=left, inner sep=0mm, text=black]
at (axis cs:0.06600,.042,0) {$1.04$};
\end{axis}
\end{tikzpicture}
	\subfigure[]{
%
%
\begin{tikzpicture}

\begin{axis}[%
width=0.95092\figurewidth,
height=\figureheight,
at={(0\figurewidth,0\figureheight)},
scale only axis,
xmode=log,
xmin=0.001,
xmax=0.9,
xtick={0.005,  0.05,    0.5},
xminorticks=true,
xmajorgrids,
xminorgrids,
xlabel={Maximum edge size $h$},
ymode=log,
ymin=0.0001,
ymax=1,
ytick={ 1e-05, 0.0001,  0.001,   0.01,    0.1,      1},
yminorticks=true,
ymajorgrids,
yminorgrids,
ylabel={$\mathcal{E}_0(\ub)$}, 
legend style={legend cell align=left,align=left,draw=white!15!black},
legend pos = south east
]
\addplot [color=red,dashed,line width=1.1pt,mark=o,mark size=3pt,mark options={solid}]
  table[row sep=crcr]{%
0.1250 0.2123022 \\  
0.0625  0.058146642\\
0.0313 0.015477051\\
0.0156   0.003978042\\
0.0078  0.001022469481\\
};


\addplot [color=blue,solid,mark=o,mark options={solid},line width=1.1pt,mark size=3pt]
  table[row sep=crcr]{%
0.2000  0.222342960 \\
.1000  0.071002123\\
.0500 0.0200112382\\
0.0250  0.0051434576\\
0.0125 0.0012681615\\	
};

\addplot [color=black,solid,mark=diamond,mark options={solid},line width=1.1pt,mark size=3pt]
  table[row sep=crcr]{%
0.1250 0.25189015\\
0.0625 0.0718367858\\
0.0313 0.0185925261\\
0.0156 0.00474579\\
0.0078 0.00121980530\\
};

\addplot [color=blue,solid,mark=square,mark options={solid},line width=1.1pt,mark size=3pt]
  table[row sep=crcr]{%
0.1250  0.24171301\\
0.0625 0.066126256\\
0.0313 0.0178413306\\
0.0156 0.00471465\\
0.0078 0.0012117997473 \\
};

\addplot [color=black,solid,forget plot,line width=1.1pt]
  table[row sep=crcr]{%
0.0255	0.00117\\
0.0555	0.00117\\
0.0555	0.00567\\
0.0255	0.00117\\
};
\node[right, align=left, inner sep=0mm, text=black]
at (axis cs:0.0300,0.00080,0) {$1$};
\node[right, align=left, inner sep=0mm, text=black]
at (axis cs:0.0620,.002500,0) {$2.03$};
\end{axis}
\end{tikzpicture}
	\subfigure[]{
%
%
\begin{tikzpicture}

\begin{axis}[%
width=0.95092\figurewidth,
height=\figureheight,
at={(0\figurewidth,0\figureheight)},
scale only axis,
xmode=log,
xmin=0.001,
xmax=0.9,
xtick={0.005,  0.05,    0.5},
xminorticks=true,
xmajorgrids,
xminorgrids,
xlabel={Maximum edge size $h$},
ymode=log,
ymin=0.01,
ymax=1,
ytick={ 1e-05, 0.0001,  0.001,   0.01,    0.1,      1},
yminorticks=true,
ymajorgrids,
yminorgrids,
ylabel={$\mathcal{E}_0(\omega)$ },  
legend style={legend cell align=left,align=left,draw=white!15!black},
legend pos = south east
]
\addplot [color=red,dashed,line width=1.1pt,mark=o,mark size=3pt,mark options={solid}]
  table[row sep=crcr]{%
0.1250 .288901 \\  
0.0625  0.190561\\
0.0313 0.095812\\
0.0156   0.048218\\
0.0078 0.0237192\\
};


\addplot [color=blue,solid,mark=o,mark options={solid},line width=1.1pt,mark size=3pt]
  table[row sep=crcr]{%
0.2000  0.295354009428 \\
.1000  0.193279765005\\
.0500 0.107963881495\\
0.0250  0.055499578\\
0.0125 0.0283328715161\\	
};

\addplot [color=black,solid,mark=diamond,mark options={solid},line width=1.1pt,mark size=3pt]
  table[row sep=crcr]{%
0.1250 0.31083155\\
0.0625 0.1712534375\\
0.0313 0.0911385262\\
0.0156 0.0471762597\\
0.0078 .024083766\\
};

\addplot [color=blue,solid,mark=square,mark options={solid},line width=1.1pt,mark size=3pt]
  table[row sep=crcr]{%
0.1250  0.3191755\\
0.0625 0.17342957\\
0.0313 0.091659092\\
0.0156 0.047445721\\
0.0078 0.0240541976 \\
};

\addplot [color=black,solid,forget plot,line width=1.1pt]
  table[row sep=crcr]{%
0.020	0.0300\\
0.055	0.0300\\
0.055	0.0800\\
0.020	0.0300\\
};
\node[right, align=left, inner sep=0mm, text=black]
at (axis cs:0.0250,0.025,0) {$1$};
\node[right, align=left, inner sep=0mm, text=black]
at (axis cs:0.060,.0450,0) {$1.10$};
\end{axis}
\end{tikzpicture}
	\begin{tikzpicture}
		\begin{customlegend}[legend columns=5,legend style={align=center,draw=none,column sep=2ex},
			legend entries={Square, Triangular mesh,  Voronoi, Trapezoidal mesh}]
			\addlegendimage{color=red,dashed,line width=1.1pt,mark=o,mark size=3pt,mark options={solid}}
			\addlegendimage{color=blue,solid,mark=o,mark options={solid},line width=1.1pt,mark size=3pt}   
			\addlegendimage{color=black,solid,mark=diamond,mark options={solid},line width=1.1pt,mark size=3pt}
			\addlegendimage{color=blue,solid,mark=square,mark options={solid},line width=1.1pt,mark size=3pt}
		\end{customlegend}
	\end{tikzpicture}
\caption{Test 1: Convergence of the velocity field $\ub$ in broken $H^1$- and $L^2$-norms, and vorticity field $\omega$ in $L^2$-norm with mesh refinement for different types of meshes, using $\nu=1$. Left panel shows errors curve for velocity in broken $H^1$-norm, and middle panel shows error curve for velocity in $L^2$-norms, and right panel shows error curves for vorticity in $L^2$-norm.}
\label{fig:Exampele1:velvort:nu1}
\end{figure} 
\begin{figure}[h!]
	\centering
	\setlength\figureheight{4.5cm} 
	\setlength\figurewidth{3.2cm}
	\subfigure[]{
%
%
\begin{tikzpicture}
\begin{axis}[%
width=0.95092\figurewidth,
height=\figureheight,
at={(0\figurewidth,0\figureheight)},
scale only axis,
xmode=log,
xmin=0.001,
xmax=.3,
xtick={0.0005,0.002,.01,    .1},
xminorticks=true,
xmajorgrids,
xminorgrids,
xlabel={Maximum edge size $h$},
ymode=log,
ymin=0.20,
ymax=2.5,
ytick={ 1e-05, 0.0001,  0.001,   0.01,    0.22,.1, 1, 2.5},
yminorticks=true,
ymajorgrids,
yminorgrids,
ylabel={$\mathcal{E}_2(\psi)$}, 
legend style={legend cell align=left,align=left,draw=white!15!black}
]
\addplot [color=red,dashed,line width=1.1pt,mark=o,mark size=3pt,mark options={solid}]
  table[row sep=crcr]{%
.0625 1.5539\\ 
.0313  1.04153\\
.0156  .59471660 \\
.0078 0.30360642\\
};


\addplot [color=blue,solid,mark=o,mark options={solid},line width=1.1pt,mark size=3pt]
  table[row sep=crcr]{%
.0625 1.789138\\
.0313 1.4149\\
.0156  0.947411\\
0.0078 0.493822197\\
};

\addplot [color=black,solid,mark=diamond,mark options={solid},line width=1.1pt,mark size=3pt]
  table[row sep=crcr]{%
0.0625 1.563643\\
0.0313 1.05242326\\
0.0156 0.616484200\\
0.0078 0.32581746\\
};

\addplot [color=blue,solid,mark=square,mark options={solid},line width=1.1pt,mark size=3pt]
  table[row sep=crcr]{%
0.0625 1.56\\
0.0313 1.044702098\\
0.0156 .616484\\
0.0078 .301801\\
};

\addplot [color=black,solid,forget plot,line width=1.1pt]
  table[row sep=crcr]{%
0.013 .3102\\
.0250  .3102\\
.0255  .5601\\
0.013	0.3102\\
};
\node[right, align=left, inner sep=0mm, text=black]
at (axis cs:0.0180,0.290,0) {$1$};
\node[right, align=left, inner sep=0mm, text=black]
at (axis cs:0.0320,0.400,0) {$0.97$};
\end{axis}
\end{tikzpicture}
	\subfigure[]{
%
%
\begin{tikzpicture}

\begin{axis}[%
width=0.95092\figurewidth,
height=\figureheight,
at={(0\figurewidth,0\figureheight)},
scale only axis,
xmode=log,
xmin=0.001,
xmax=0.9,
xtick={0.005,  0.05,    0.5},
xminorticks=true,
xmajorgrids,
xminorgrids,
xlabel={Maximum edge size $h$},
ymode=log,
ymin=0.0019,
ymax=1,
ytick={ 1e-05, 0.0001,  0.001,   0.01,    0.1,      1},
yminorticks=true,
ymajorgrids,
yminorgrids,
ylabel={$\mathcal{E}_1(\psi)$}, 
legend style={legend cell align=left,align=left,draw=white!15!black},
legend pos = south east
]
\addplot [color=red,dashed,line width=1.1pt,mark=o,mark size=3pt,mark options={solid}]
  table[row sep=crcr]{%
.0625 .261377\\ 
.0313  0.1132248\\
.0156  0.036684 \\
.0078 0.009627\\
};


\addplot [color=blue,solid,mark=o,mark options={solid},line width=1.1pt,mark size=3pt]
  table[row sep=crcr]{%
.0625	0.44212305\\
.0313 0.231506289\\
.0156 0.0956459\\
.0078  0.0258059\\
};

\addplot [color=black,solid,mark=diamond,mark options={solid},line width=1.1pt,mark size=3pt]
  table[row sep=crcr]{%
0.0625 0.2688127106\\
0.0313 0.118704867\\
0.0156 0.041958\\
0.0078 0.011478593\\
};

\addplot [color=blue,solid,mark=square,mark options={solid},line width=1.1pt,mark size=3pt]
  table[row sep=crcr]{%
0.0625 0.265949\\
0.0313 0.1187048\\
0.0156 0.041958\\
0.0078 0.0151981\\
};

\addplot [color=black,solid,forget plot,line width=1.1pt]
  table[row sep=crcr]{%
0.012 .0099\\
.025  .0099\\
.025  .034\\
0.012	0.0099\\
};
\node[right, align=left, inner sep=0mm, text=black]
at (axis cs:0.016,0.0075,0) {$1$};
\node[right, align=left, inner sep=0mm, text=black]
at (axis cs:0.0300,.01500,0) {$1.96$};
\end{axis}
\end{tikzpicture}
	\subfigure[]{
%
%
\begin{tikzpicture}

\begin{axis}[%
width=0.95092\figurewidth,
height=\figureheight,
at={(0\figurewidth,0\figureheight)},
scale only axis,
xmode=log,
xmin=0.001,
xmax=0.9,
xtick={0.005,  0.05,    0.5},
xminorticks=true,
xmajorgrids,
xminorgrids,
xlabel={Maximum edge size $h$},
ymode=log,
ymin=0.0008,
ymax=.31,
ytick={ 1e-05, 0.0001,  0.001,   0.01,    0.1,      1},
yminorticks=true,
ymajorgrids,
yminorgrids,
ylabel={$\mathcal{E}_0(\psi)$}, 
legend style={legend cell align=left,align=left,draw=white!15!black},
legend pos = south east
]
\addplot [color=red,dashed,line width=1.1pt,mark=o,mark size=3pt,mark options={solid}]
  table[row sep=crcr]{%
0.0625  0.04015\\
0.0313 0.017506\\
0.0156   0.00576744\\
0.0078 0.00153467\\
};


\addplot [color=blue,solid,mark=o,mark options={solid},line width=1.1pt,mark size=3pt]
  table[row sep=crcr]{%
0.0625  0.07363936 \\
.0313  0.035030122\\
.0156 0.01417123\\
0.0078  0.00377085\\
};

\addplot [color=black,solid,mark=diamond,mark options={solid},line width=1.1pt,mark size=3pt]
  table[row sep=crcr]{%
0.0625 0.042387791\\
0.0313 0.0184680855\\
0.0156 0.006670\\
0.0078 0.00177483\\
};

\addplot [color=blue,solid,mark=square,mark options={solid},line width=1.1pt,mark size=3pt]
  table[row sep=crcr]{%
0.0625  0.0411737386\\
0.0313 0.0176386282\\
0.0156  0.006670\\
0.0078 0.0020174\\
};

\addplot [color=black,solid,forget plot,line width=1.1pt]
  table[row sep=crcr]{%
0.0121	0.0020\\
0.0251	0.0020\\
0.0251	0.0050\\
0.0121	0.0020\\
};
\node[right, align=left, inner sep=0mm, text=black]
at (axis cs:0.016,0.0014,0) {$1$};
\node[right, align=left, inner sep=0mm, text=black]
at (axis cs:0.0300,.00300,0) {$2.04$};
\end{axis}
\end{tikzpicture}
	\begin{tikzpicture}
		\begin{customlegend}[legend columns=5,legend style={align=center,draw=none,column sep=2ex},
			legend entries={Square, Triangular mesh,  Voronoi, Trapezoidal mesh}]
			\addlegendimage{color=red,dashed,line width=1.1pt,mark=o,mark size=3pt,mark options={solid}}
			\addlegendimage{color=blue,solid,mark=o,mark options={solid},line width=1.1pt,mark size=3pt}   
			\addlegendimage{color=black,solid,mark=diamond,mark options={solid},line width=1.1pt,mark size=3pt}
			\addlegendimage{color=blue,solid,mark=square,mark options={solid},line width=1.1pt,mark size=3pt}
		\end{customlegend}
	\end{tikzpicture}
\caption{Test 1: Convergence of the stream-function in broken $H^2$, $H^1$- $L^2$-norms 
with mesh refinement for different types of meshes, using  $\nu=0.01$.}
\label{fig:Exampele1:psi:nu.01}
\end{figure} 

\begin{figure}[h!]
	\centering
	\vspace*{-5.5mm}
	\setlength\figureheight{4.5cm} 
	\setlength\figurewidth{3.2cm}
	\subfigure[]{
%
%
\begin{tikzpicture}
\begin{axis}[%
width=0.95092\figurewidth,
height=\figureheight,
at={(0\figurewidth,0\figureheight)},
scale only axis,
xmode=log,
xmin=0.005,
xmax=.25,
xtick={0.005,  0.05,    .5},
xminorticks=true,
xmajorgrids,
xminorgrids,
xlabel={Maximum edge size $h$},
ymode=log,
ymin=0.018,
ymax=1,
ytick={ 1e-05, 0.0001,  0.001,   0.01,    0.1,      1},
yminorticks=true,
ymajorgrids,
yminorgrids,
ylabel={$\mathcal{E}_1(\ub)$}, 
legend style={legend cell align=left,align=left,draw=white!15!black}
]
\addplot [color=red,dashed,line width=1.1pt,mark=o,mark size=3pt,mark options={solid}]
  table[row sep=crcr]{%
.0625 .656980005\\ 
.0313  0.32365678\\
.0156  0.1563155 \\
.0078 0.07924881\\
};


\addplot [color=blue,solid,mark=o,mark options={solid},line width=1.1pt,mark size=3pt]
  table[row sep=crcr]{%
.0625	.733781\\
.0313  .40086\\
.0156  .21607711\\
.0078  .09529671\\
};

\addplot [color=black,solid,mark=diamond,mark options={solid},line width=1.1pt,mark size=3pt]
  table[row sep=crcr]{%
0.0625 0.6806647\\
0.0313 0.35334043\\
0.0156 0.143425\\
0.0078 0.07632864\\
};

\addplot [color=blue,solid,mark=square,mark options={solid},line width=1.1pt,mark size=3pt]
  table[row sep=crcr]{%
0.0625 0.676540\\
0.0313 0.33514767\\
0.0156 0.122561\\
0.0078 0.055131\\
};

\addplot [color=black,solid,forget plot,line width=1.1pt]
  table[row sep=crcr]{%
0.0125 .060\\
.0255  .060\\
.0255 .124\\
0.0125	0.060\\
};
\node[right, align=left, inner sep=0mm, text=black]
at (axis cs:0.0200,0.05,0) {$1$};
\node[right, align=left, inner sep=0mm, text=black]
at (axis cs:0.0300,0.090,0) {$0.95$};
\end{axis}
\end{tikzpicture}
	\subfigure[]{
%
%
\begin{tikzpicture}

\begin{axis}[%
width=0.95092\figurewidth,
height=\figureheight,
at={(0\figurewidth,0\figureheight)},
scale only axis,
xmode=log,
xmin=0.001,
xmax=0.9,
xtick={0.005,  0.05,    0.5},
xminorticks=true,
xmajorgrids,
xminorgrids,
xlabel={Maximum edge size $h$},
ymode=log,
ymin=0.002,
ymax=1,
ytick={ 1e-05, 0.0001,  0.001,   0.01,    0.1,      1},
yminorticks=true,
ymajorgrids,
yminorgrids,
ylabel={$\mathcal{E}_0(\ub)$}, 
legend style={legend cell align=left,align=left,draw=white!15!black},
legend pos = south east
]
\addplot [color=red,dashed,line width=1.1pt,mark=o,mark size=3pt,mark options={solid}]
  table[row sep=crcr]{%
0.0625  0.26137759\\
0.0313 0.1132248\\
0.0156   .038400042\\
0.0078 .01007729\\
};


\addplot [color=blue,solid,mark=o,mark options={solid},line width=1.1pt,mark size=3pt]
  table[row sep=crcr]{%
0.0625  0.44212305 \\
.0313  0.2315062\\
.0156 0.09564595\\
0.0078  .02545067\\	
};

\addplot [color=black,solid,mark=diamond,mark options={solid},line width=1.1pt,mark size=3pt]
  table[row sep=crcr]{%
0.0625 0.2688127\\
0.0313 0.11870486\\
0.0156 0.04195851\\
0.0078 0.011087725\\
};

\addplot [color=blue,solid,mark=square,mark options={solid},line width=1.1pt,mark size=3pt]
  table[row sep=crcr]{%
0.0625 0.265949687\\
0.0313 0.11419547\\
0.0156 0.031290\\
0.0078 0.0099321 \\
};

\addplot [color=black,solid,forget plot,line width=1.1pt]
  table[row sep=crcr]{%
0.0155	0.009\\
0.0455	0.009\\
0.0455	0.03\\
0.0155	0.009\\
};
\node[right, align=left, inner sep=0mm, text=black]
at (axis cs:0.0250,0.006,0) {$1$};
\node[right, align=left, inner sep=0mm, text=black]
at (axis cs:0.0600,.01500,0) {$1.96$};
\end{axis}
\end{tikzpicture}
	\subfigure[]{
%
%
\begin{tikzpicture}

\begin{axis}[%
width=0.95092\figurewidth,
height=\figureheight,
at={(0\figurewidth,0\figureheight)},
scale only axis,
xmode=log,
xmin=0.001,
xmax=0.9,
xtick={0.005,  0.05,    0.5},
xminorticks=true,
xmajorgrids,
xminorgrids,
xlabel={Maximum edge size $h$},
ymode=log,
ymin=0.05,
ymax=1.5,
ytick={ 1e-05, 0.0001,  0.001,   0.01,    0.1,  .5,  1,   2},
yminorticks=true,
ymajorgrids,
yminorgrids,
ylabel={$\mathcal{E}_0(\omega)$ },  
legend style={legend cell align=left,align=left,draw=white!15!black},
legend pos = south east
]
\addplot [color=red,dashed,line width=1.1pt,mark=o,mark size=3pt,mark options={solid}]
  table[row sep=crcr]{%
0.0625  0.58893225\\
0.0313 0.31584914\\
0.0156   0.16692911\\
0.0078 0.08521832\\
};


\addplot [color=blue,solid,mark=o,mark options={solid},line width=1.1pt,mark size=3pt]
  table[row sep=crcr]{%
0.0625  0.946171889 \\
.0313  0.73812342\\
.0156  0.45072455\\
0.0078  0.238211990\\	
};

\addplot [color=black,solid,mark=diamond,mark options={solid},line width=1.1pt,mark size=3pt]
  table[row sep=crcr]{%
0.0625 0.634874\\
0.0313 0.3370085\\
0.0156 0.1305656\\
0.0078 .0680550\\
};

\addplot [color=blue,solid,mark=square,mark options={solid},line width=1.1pt,mark size=3pt]
  table[row sep=crcr]{%
0.0625 0.59669750\\
0.0313 0.31771191\\
0.0156 0.191549\\
0.0078 0.0791224 \\
};

\addplot [color=black,solid,forget plot,line width=1.1pt]
  table[row sep=crcr]{%
0.0155	0.0700\\
0.0355	0.0700\\
0.0355	0.1500\\
0.0155	0.0700\\
};
\node[right, align=left, inner sep=0mm, text=black]
at (axis cs:0.0200,0.0603,0) {$1$};
\node[right, align=left, inner sep=0mm, text=black]
at (axis cs:0.0450,.100,0) {$0.98$};
\end{axis}
\end{tikzpicture}
	\begin{tikzpicture}
		\begin{customlegend}[legend columns=5,legend style={align=center,draw=none,column sep=2ex},
			legend entries={Square, Triangular mesh,  Voronoi, Trapezoidal mesh}]
			\addlegendimage{color=red,dashed,line width=1.1pt,mark=o,mark size=3pt,mark options={solid}}
			\addlegendimage{color=blue,solid,mark=o,mark options={solid},line width=1.1pt,mark size=3pt}   
			\addlegendimage{color=black,solid,mark=diamond,mark options={solid},line width=1.1pt,mark size=3pt}
			\addlegendimage{color=blue,solid,mark=square,mark options={solid},line width=1.1pt,mark size=3pt}
		\end{customlegend}
	\end{tikzpicture}
	\caption{Test 1: Convergence of the velocity field $\ub$ in broken $H^1$- and $L^2$-norms, and vorticity field $\omega$ in $L^2$-norm with mesh refinement for different types  of meshes, using  $\nu=0.01$. Left panel shows error curves for velocity in discrete $H^1$-norm, and middle panel shows error curves for velocity in $L^2$-norms, and right panel shows error curves for vorticity in $L^2$-norm.}
	\label{fig:Exampele1:velvort:nu0.01}
\end{figure}

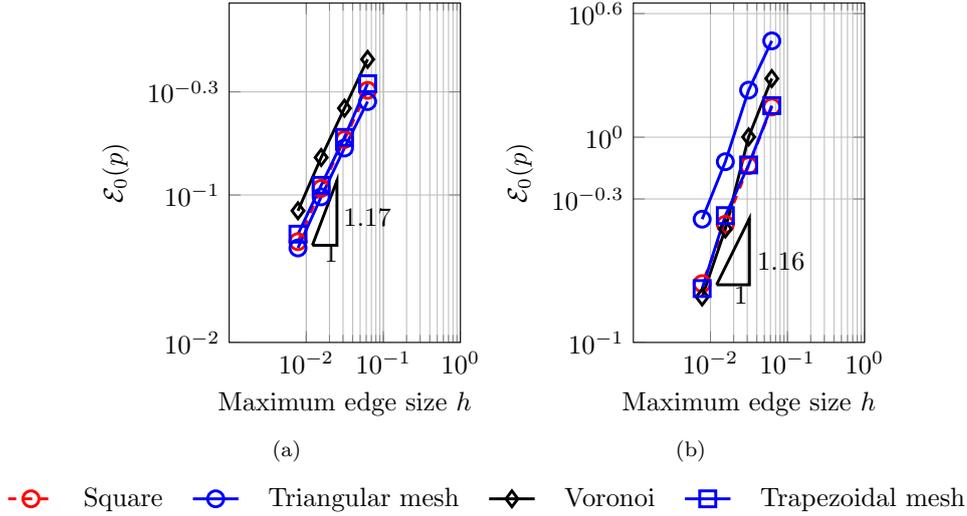
\begin{figure}[h!]
	\centering
	\setlength\figureheight{4.5cm} 
	\setlength\figurewidth{3.2cm}
	\subfigure[]{
%
%
\begin{tikzpicture}

\begin{axis}[%
width=0.95092\figurewidth,
height=\figureheight,
at={(0\figurewidth,0\figureheight)},
scale only axis,
xmode=log,
xmin=0.001,
xmax=1,
xtick={0.01,  0.1,    1},
xminorticks=true,
xmajorgrids,
xminorgrids,
xlabel={Maximum edge size $h$},
ymode=log,
ymin=1e-02,
ymax=2,
ytick={ 1e-05, 0.0001,  0.001,   0.01,    0.1,.5},
yminorticks=true,
ymajorgrids,
yminorgrids,
ylabel={ $\mathcal{E}_0(p)$},
legend style={legend cell align=left,align=left,draw=white!15!black},
legend pos=south east
]
\addplot [color=red,dashed,line width=1.1pt,mark=o,mark size=3pt,mark options={solid}]
  table[row sep=crcr]{%
.0625	0.512154\\
.0313   0.237739\\
.0156   0.110835\\
.0078   0.048243735\\
};

\addplot [color=blue,solid,mark=o,mark options={solid},line width=1.1pt,mark size=3pt]
  table[row sep=crcr]{%
.0625  0.42967649\\ 
.0313  0.208096655\\
.0156  0.097080522\\
.0078  .043746932 \\
};

\addplot [color=black,solid,mark=diamond,mark options={solid},line width=1.1pt,mark size=3pt]
  table[row sep=crcr]{%
.0625  .83077136\\ 
.0313  0.38756854\\
.0156  0.179558193\\
.0078  .0781572433 \\
};

\addplot [color=blue,solid,mark=square,mark options={solid},line width=1.1pt,mark size=3pt]
table[row sep=crcr]{%
0.0625 0.568612435\\
0.0313 0.245793310\\
0.0156 0.116267320\\
0.0078 0.054240622 \\ 
};

\addplot [color=black,solid,forget plot,line width=1.1pt]
  table[row sep=crcr]{%
0.01200	  .0456\\
0.0250	  .0456\\
0.0250 	 .125\\
0.01200 .0456\\
};
\node[right, align=left, inner sep=0mm, text=black]
at (axis cs:0.017,.04000,0) {$1$};
\node[right, align=left, inner sep=0mm, text=black]
at (axis cs:0.030,0.07,0) {$1.17$};
\end{axis}
\end{tikzpicture}
	\subfigure[]{
%
%
\begin{tikzpicture}

\begin{axis}[%
width=0.95092\figurewidth,
height=\figureheight,
at={(0\figurewidth,0\figureheight)},
scale only axis,
xmode=log,
xmin=0.001,
xmax=1,
xtick={0.01,  0.1,    1},
xminorticks=true,
xmajorgrids,
xminorgrids,
xlabel={Maximum edge size $h$},
ymode=log,
ymin=1e-01,
ymax=4.5,
ytick={ 1e-05, 0.0001,  0.001,   0.01,    0.1,.5,1,4},
yminorticks=true,
ymajorgrids,
yminorgrids,
ylabel={ $\mathcal{E}_0(p)$},
legend style={legend cell align=left,align=left,draw=white!15!black},
legend pos=south east
]
\addplot [color=red,dashed,line width=1.1pt,mark=o,mark size=3pt,mark options={solid}]
  table[row sep=crcr]{%
.0625	1.403266\\
.0313   0.72712134\\
.0156   0.376381433\\
.0078   0.19348147\\
};

\addplot [color=blue,solid,mark=o,mark options={solid},line width=1.1pt,mark size=3pt]
  table[row sep=crcr]{%
.0625  2.945075\\ 
.0313  1.69440\\
.0156  0.759657\\
.0078  .398712313 \\
};

\addplot [color=black,solid,mark=diamond,mark options={solid},line width=1.1pt,mark size=3pt]
  table[row sep=crcr]{%
.0625  1.92943\\ 
.0313  1.002329\\
.0156  0.35707989\\
.0078  .16658365 \\
};

\addplot [color=blue,solid,mark=square,mark options={solid},line width=1.1pt,mark size=3pt]
table[row sep=crcr]{%
0.0625 1.424056\\
0.0313 0.732697\\
0.0156 0.41503070\\
0.0078 0.183124 \\ 
};

\addplot [color=black,solid,forget plot,line width=1.1pt]
  table[row sep=crcr]{%
0.012	  .191\\
0.032	  .191\\
0.032 	 .401\\
0.012	 .191\\
};
\node[right, align=left, inner sep=0mm, text=black]
at (axis cs:0.020,.17,0) {$1$};
\node[right, align=left, inner sep=0mm, text=black]
at (axis cs:0.04,0.25,0) {$1.16$};
\end{axis}
\end{tikzpicture}
	\begin{tikzpicture}
		\begin{customlegend}[legend columns=5,legend style={align=center,draw=none,column sep=2ex},
			legend entries={Square, Triangular mesh,  Voronoi, Trapezoidal mesh}]
			\addlegendimage{color=red,dashed,line width=1.1pt,mark=o,mark size=3pt,mark options={solid}}
				\addlegendimage{color=blue,solid,mark=o,mark options={solid},line width=1.1pt,mark size=3pt}   
			\addlegendimage{color=black,solid,mark=diamond,mark options={solid},line width=1.1pt,mark size=3pt}
			\addlegendimage{color=blue,solid,mark=square,mark options={solid},line width=1.1pt,mark size=3pt}
		\end{customlegend}
	\end{tikzpicture}
\caption{Test 1: Convergence of the pressure ($p$) in $L^2$-norm with mesh refinement for different types of meshes, using   $\nu=1$ and $\nu=0.01$. Left panel shows the errors curve of $p$ for $\nu=1$, and right panel shows the errors curve of $p$ for $\nu=0.01$.  }
\label{fig:Example1:p}
\end{figure}

\begin{figure}[h!]
	\centering
	\overfullrule = 0pt
	\begin{tabular}{cc}
		\includegraphics[scale=0.056]{./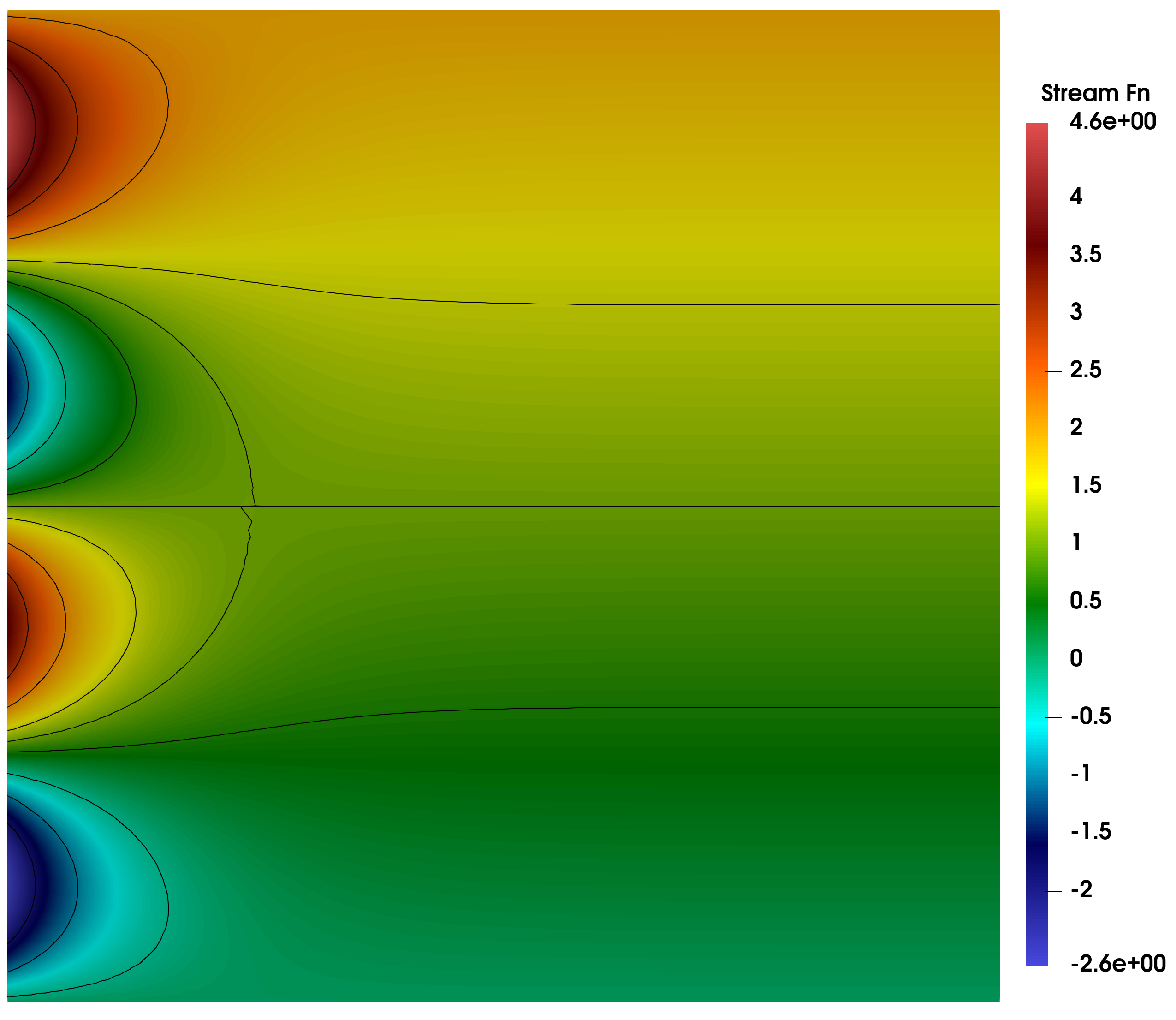}    & \hspace*{5.0mm}
			\includegraphics[scale=0.056]{./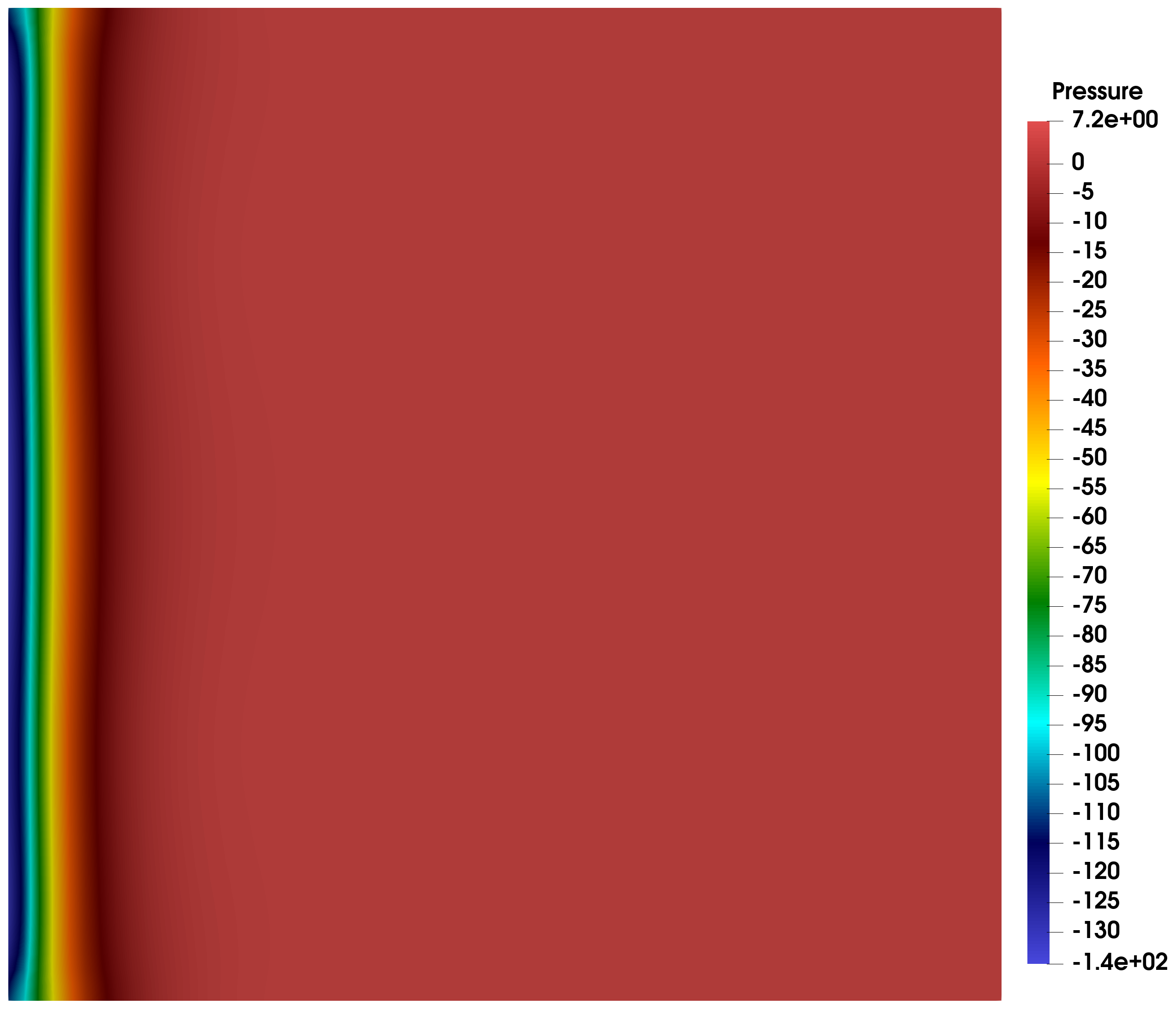}  \\[0.5em]
		$(a)$ Discrete stream-function &
		$(b)$ Discrete pressure  \\[1em]
	\end{tabular}
	\caption{Test 1: ``Snapshots'' of the approximate stream-function and pressure, using $\nu=1$ and the mesh $\CT^1_h$, $h=1/32$.}
	\vspace{-0.75\baselineskip}
	\label{example1_psi_Vel_Vort_p}
\end{figure}

\subsection{Test 2. L-shaped  domain}

In this example, we would like to focus to examine the rate of convergences of the discrete stream-function,
 velocity and vorticity fields on a nonconvex $L$-shaped domain, where the exact solution $\psi$ has less regularity. 
 For the computational domain, we considered $\overline{\Omega}=[-1,1] \times [-1,1] \setminus  (0,1) \times (-1,0)$. The exact solution is given by $\psi(r,\theta)=r^{5/3} \sin(\frac{5 \theta}{3})$, where $r=(x^2+y^2)^{1/2}$, and $\theta$ is the angle with the vertical axis. Since $\frac{\partial \psi}{\partial r}$ is unbounded near the origin, then the solution $\psi$ has weak regularity near the origin. 
 The rate of convergence of  stream-function velocity and vorticity solutions are posted in Table~\ref{table-stream-lshp}  for viscosity  $\nu=1$, and using the mesh $\CT^2_h$. From the posted results, we observed that the rates of convergence are in accordance with the theoretical prediction for all the variables. Further, we have chosen exact pressure as $p:=\sin(x)-\sin(y)-\overline{p}$, where $\overline{p}$ is a constant that is set to satisfy zero mean condition, i.e., $ (p,1)_{0,\Omega}=0$. The convergence behavior of the pressure field is posted in Table~\ref{table-pressure-lshp}. It is observed that initially the rate of convergence is slightly higher than the predicted order as in Theorem~\ref{lemma:Pressure:VC:bound}. However, for finer mesh we observe expected order of convergence, i.e., $\mathcal{O}(h^{2/3})$. Further, we report that the presence of singularity of the stream-function  at re-entrant corner affects the convergence order of pressure field as proven in Theorem~\ref{lemma:Pressure:VC:bound}.  In this example, the number of iterations that are required for the Newton method is $4$.
\begin{table}[h!]
 \setlength{\tabcolsep}{2.0pt}
\begin{center}
{\small \begin{tabular}{rrcccccccccccccccccccc}
\hline
\hline\noalign{\smallskip}
&$h$ &$\calE_2(\psi)$ &$\calR_2(\psi)$ & $\calE_1(\psi)$ & $\calR_1(\psi)$& $\calE_0(\psi)$ &$\calR_0(\psi)$& $\calE_1(\ub)$ &$\calR_1(\ub)$ & $\calE_0(\ub)$ & $\calR_0(\ub)$&  $\calE_0(\omega)$ & $\calR_0(\omega)$ \\
\hline  
\hline
&1/4 &5.7631e-2&--- &7.6316e-3&--- &3.3797e-3 &--- &1.0336e-1 &--- &7.5336e-3 &--- &6.1773e-2 &---\\
&1/8 &3.8328e-2&0.59&2.9766e-3&1.34&1.2923e-3 &1.38&6.7243e-2 &0.62 &2.8964e-3&1.37&4.2442e-2 &0.54 \\
&1/16 &2.4854e-2&0.62&1.1634e-3&1.35&5.5365e-4 &1.22&4.3160e-2 &0.64 &1.1236e-3&1.36&2.7923e-2 &0.60\\
&1/32&1.5907e-2&0.64&4.6577e-4&1.32&2.3946e-4 &1.20&2.7492e-2 &0.65 &4.5976e-4&1.29&1.7999e-2 &0.63\\
&1/64&1.0032e-2&0.66&1.9139e-4&1.28&1.0326e-4 &1.21&1.7435e-2 &0.66 &1.8729e-4&1.29&1.1483e-2&0.65\\  
\hline
\hline
\end{tabular}}
\end{center}
\caption{Test~2. Errors for the stream-function, and the post-processed velocity, vorticity fields in  broken $H^2$-, $H^1$- and $L^2$-norms for $\nu=1$, using the mesh $\CT^2_h$.}
\label{table-stream-lshp}
	\end{table}
	\begin{table}[h!]
 \setlength{\tabcolsep}{2.2pt}
\begin{center}
{\small \begin{tabular}{rrcccccccccccccccccccc}
\hline
\hline\noalign{\smallskip}
&$h$ &$\calE_0(p)$ &$\calR_0(p)$ \\
\hline  
\hline
&1/4 &3.3613e-1&--- \\
&1/8 &1.7549e-1&0.93 \\
&1/16 &9.3685e-2&0.90\\
&1/32&5.2943e-2&0.82\\
&1/64&3.1274e-2&0.75\\  
\hline
\hline
\end{tabular}}
\end{center}
\caption{Test~2. Errors for the pressure variable in $L^2$-norm for $\nu=1$, using the mesh $\CT^2_h$.}
		\label{table-pressure-lshp}
	\end{table}

\subsection{Test 3. The lid-driven cavity problem}
In the third example, we assess the nature of the fluid for the lid-driven cavity flow. This is a benchmark test to validate the numerical schemes for different values of viscosity $\nu$. The computational domain is unit square with upper horizontal lid is moving with uniform velocity $\ub:=(1,0)$, and fixed boundary condition, i.e., $\ub:=(0,0)$ is applied to other static walls. In stream-function formulation, we have imposed the following Dirichlet boundary conditions: $\psi=\psi_x=0$, and $\psi_y=1$ on moving lid, and $\psi=\frac{\partial\psi}{\partial \nb}=0$ on all other static walls. In Figure~\ref{fig:LidDriven}, we posted the discrete stream-function and pressure field for $\nu=0.01$ and using the mesh $\CT^3_h$, with $h=1/64$. The small values of $\nu$ exhibits singularities near $x=0$, and $x=1$ \cite{GGS82,MSB2016}, which increases for smaller values of $\nu$. Such behaviors are noticed in other methods \cite{GGS82}, and persists also for finer grid. Further, we observed that the vortex center has moved towards the direction of velocity  for small values of $\nu$. Such characteristic of fluids with small viscosity coefficient are well observed in literature. Additionally, we report that our scheme preserves the property of the fluids with low viscosity coefficients on general shaped polygonal meshes.
For this numerical experiment, the number of iterations that are required for the Newton method is $5$.

\begin{figure}[!h]
\centering 
\subfigure[Discrete stream-function]{\includegraphics[scale=0.055]{./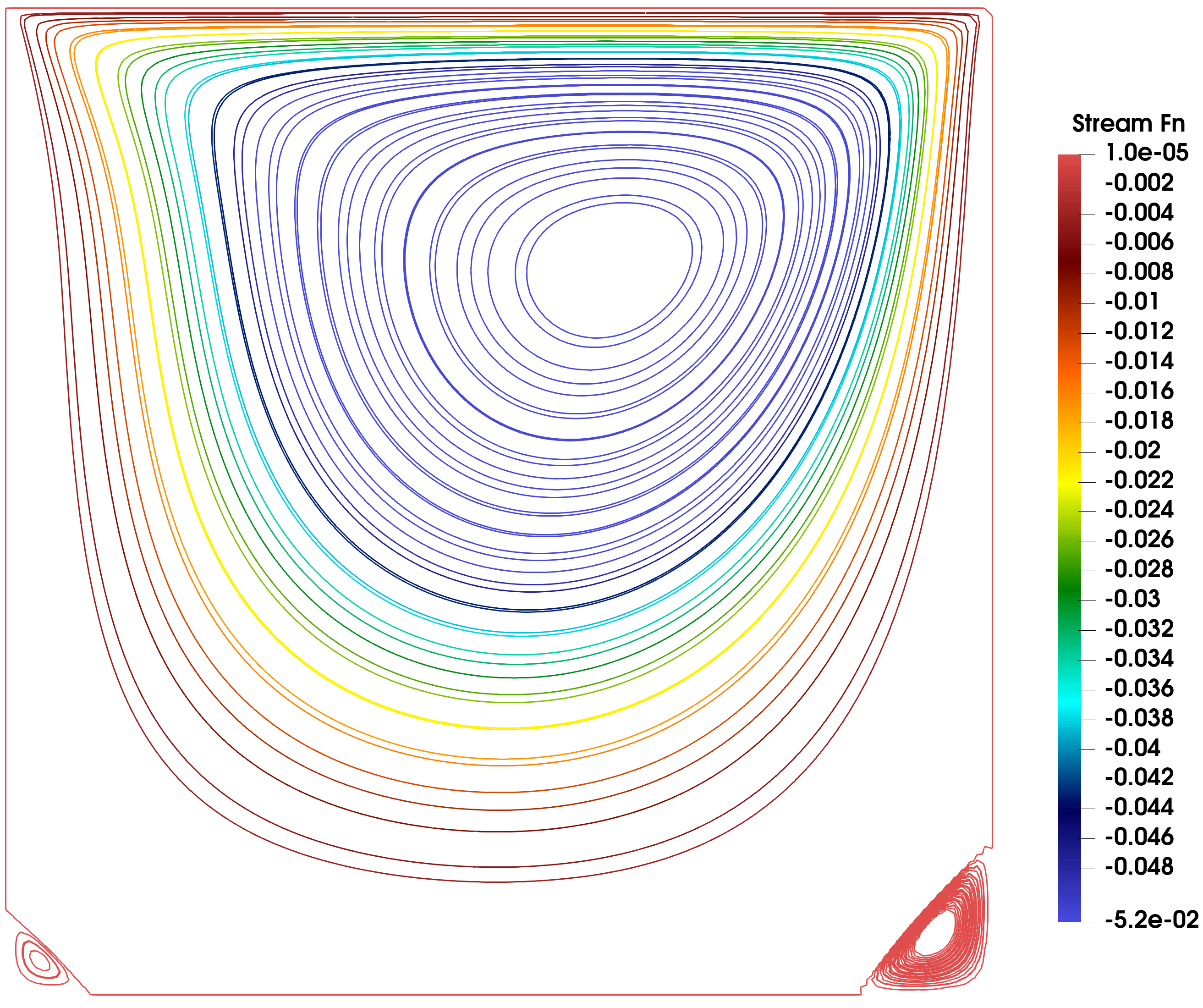}}\hspace{0.3 cm}
\subfigure[Discrete pressure]{\includegraphics[scale=0.055]{./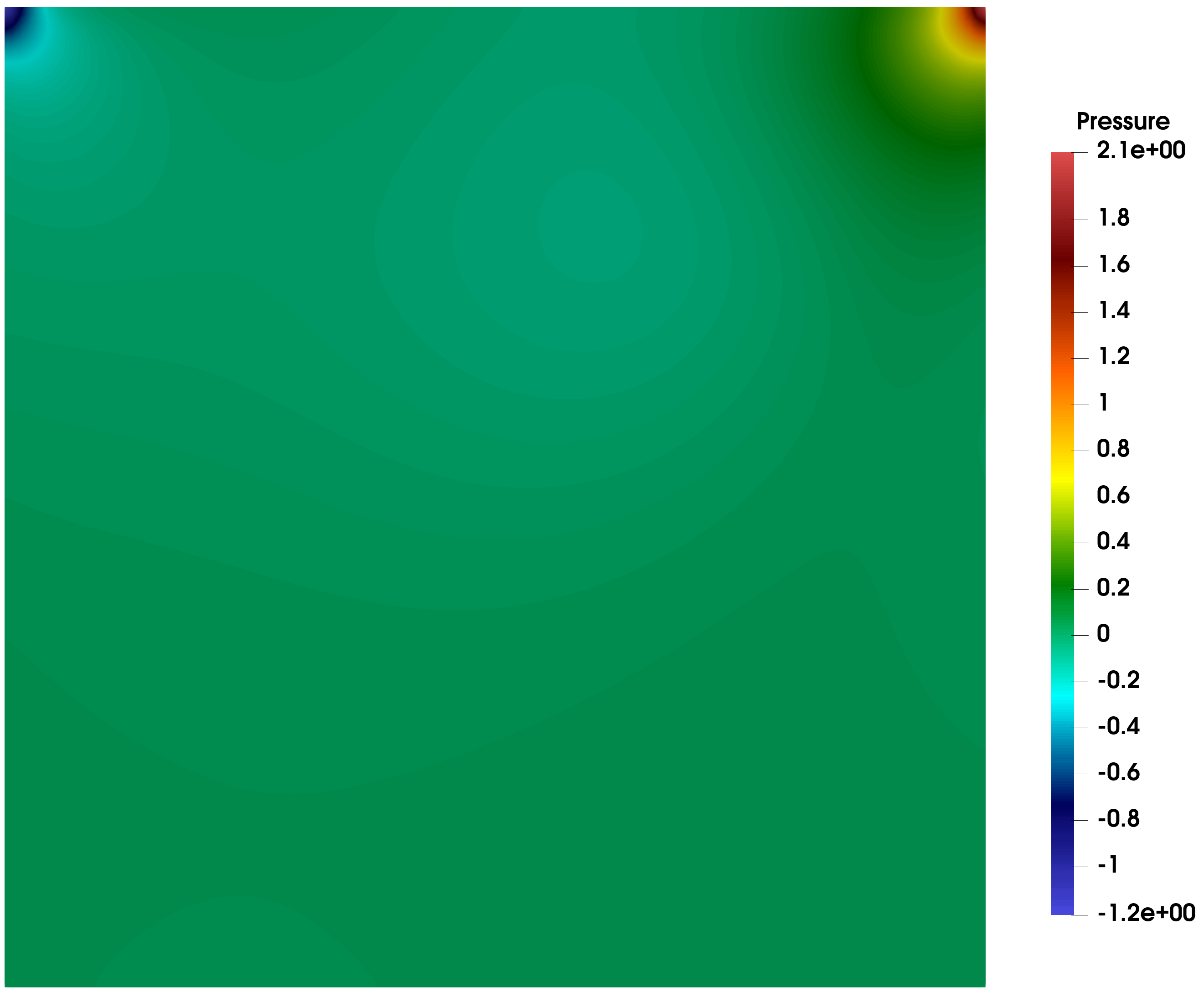}}
\caption{ Test 3: ``Snapshots'' of the approximate stream-function 
    and pressure for $\nu=0.01$, using the mesh $\CT^3_h$, with $h=1/64$.}
\label{fig:LidDriven}
\end{figure}

\subsection{Test 4. Performance of the scheme for small viscosity}\label{test:small:nu}
In this example, we mainly focus to discuss the performance  of the scheme for  small values of viscosity coefficients. We consider the exact   stream-function, velocity and pressure solutions as
\begin{equation*}
	\psi(x,y) = x^2y^2(1-x)^2(1-y)^2, \quad
	\bu(x,y):=\begin{pmatrix} \: x^2(1-x)^2(2y-6y^2+4y^3) \\
		 -y^2(1-y)^2(2x-6x^2+4x^3)
	\end{pmatrix}, \quad p(x,y):=x^3y^3-\frac{1}{6}.
\end{equation*}

The numerical approximations of the stream-functions are computed by employing the scheme \eqref{NSE:stream:disc}, with the alternative  load term given by \eqref{load-alter-global}. The computational domain is considered as $\overline{\Omega}:=[0,1]\times[0,1]$. Further, we discretize the domain with square elements with different mesh sizes, 
and computed the errors for stream-function in broken $H^2$-norm for different values of $\nu$, which are  posted in Figure~\ref{fig:robust}. We observed that the errors are accurate when the parameter $\nu$ within the range $\nu \in [10^{-3},10^{0}]$ and the errors increase for $\nu =10^{-4}$. 
We claim that these results are in accordance with the general behaviour of the exactly divergence-free Galerkin schemes are more robust with respect to small viscosity parameters, see for instance \cite{BLV-NS18} in the VEM approach. Finally, we report that  the maximum number of iterations that are required to achieve the tolerance 
in the Newton method is $7$. 

\begin{figure}[h!]
\begin{center}
{\includegraphics[height=4.3cm, width=8.7cm]{./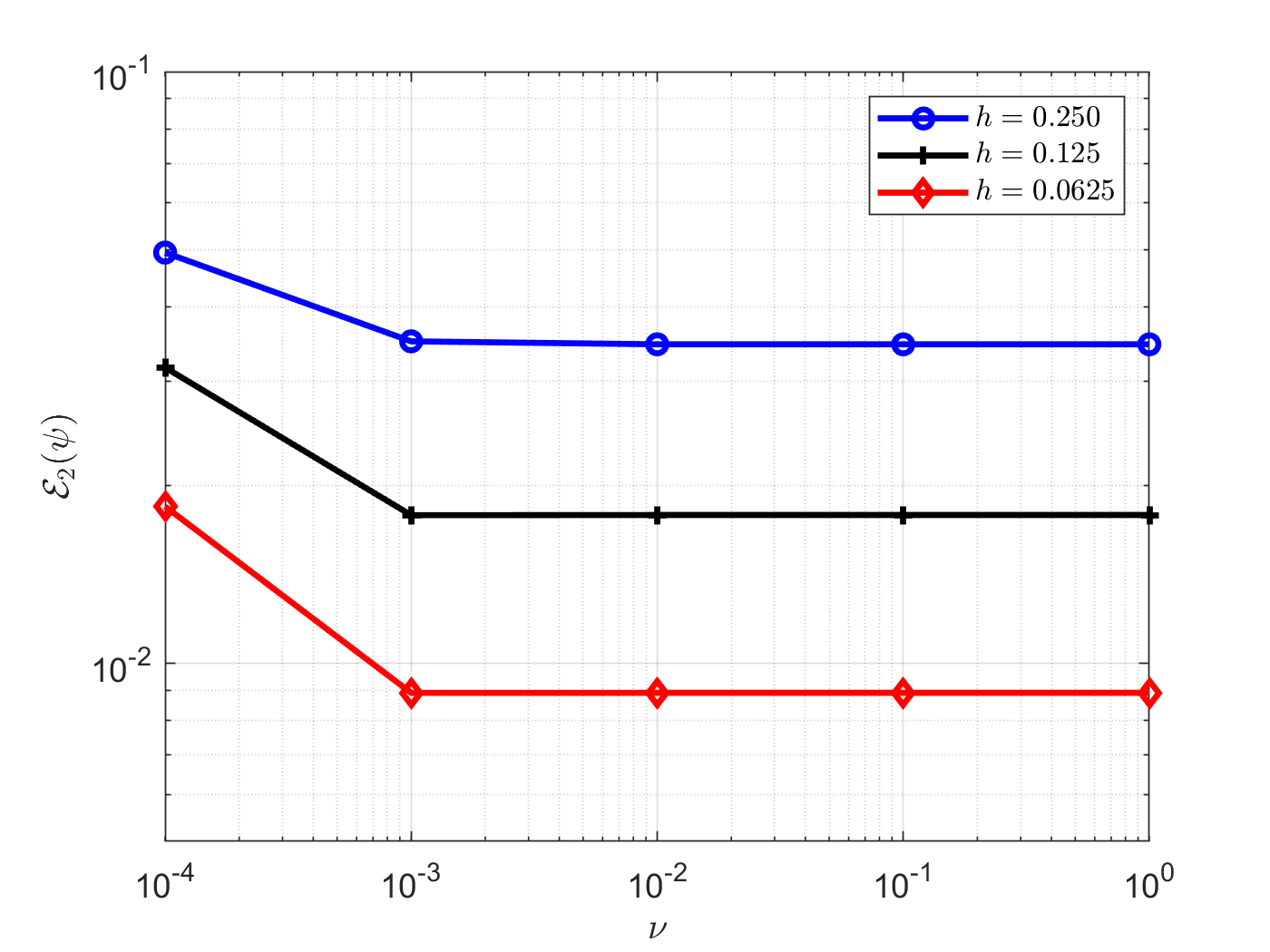}}
	\end{center}
	\caption{Test 4. Errors  of the stream-function $\mathcal{E}_2(\psi)$, using the VE scheme \eqref{NSE:stream:disc} with the alternative load term \eqref{load-alter-global}, for different values of $\nu$ and the mesh $\CT^1_h$.}
	\label{fig:robust} 
\end{figure}

\small
\subsection*{Acknowledgements} 	
The first author was partially supported by the National Agency for Research and Development,
ANID-Chile through FONDECYT Postdoctorado project 3200242.
The second author was partially supported by the National Agency
for Research and Development, ANID-Chile through FONDECYT project 1220881,
by project {\sc Anillo of Computational Mathematics for Desalination Processes} ACT210087,
and by project Centro de Modelamiento Matem\'atico (CMM), FB210005,
BASAL funds for centers of excellence.
The third author was supported by the National Agency for Research and
Development, ANID-Chile, Scholarship Program, Doctorado Becas Chile 2020, 21201910.

\end{document}